\newtheorem{thm}{Theorem}[section]
\newtheorem{prop}[thm]{Proposition}
\newtheorem{lem}[thm]{Lemma}
\newtheorem{rem}[thm]{Remark}
\newtheorem{rems}[thm]{Remarks}
\newtheorem{cor}[thm]{Corollary}
\newenvironment{proof}
{\begin{trivlist}\item[]{\bf Proof.}}%
{\hspace*{\fill}\QED\end{trivlist}}
\newcommand{\N}{\mathbb{N}}
\newcommand{\R}{\mathbb{R}}
\newcommand{\cF}{\mathcal{F}}
\newcommand{\cG}{\mathcal{G}}
\newcommand{\cI}{\mathcal{I}}
\newcommand{\cM}{\mathcal{M}}
\newcommand{\cO}{\mathcal{O}}
\newcommand{\dd}{\,{\rm d}}
\newcommand{\D}{{\rm d}}
\renewcommand{\div}{\mathop{\mathrm{div}}\nolimits}
\newcommand{\curl}{\mathop{\mathrm{curl}}}
\renewcommand{\:}{\thinspace :}
\newcommand{\1}{\mathbf{1}}
\newcommand{\lin}{\mathrm{lin}}
\newcommand{\weakto}{\rightharpoonup}
\newcommand{\tv}{\mathrm{tv}}
\newcommand{\pp}{\mathrm{pp}}
\newcommand{\DS}{\displaystyle}
\newcommand{\BMO}{\mathrm{BMO}}
\newcommand{\VMO}{\mathrm{VMO}}
\newcommand{\QED}{\mbox{}\hfill$\Box$}
\begin{document}

\title{Remarks on the Cauchy problem for the
axisymmetric Navier-Stokes equations\footnote{
This paper is dedicated to Prof. Denis Serre on the occasion of 
his sixtieth birthday.}}

\author{
\null\\
{\bf Thierry Gallay}\\
Institut Fourier\\
UMR CNRS 5582\\
Universit\'e de Grenoble I, BP 74\\
38402 Saint-Martin-d'H\`eres, France\\
{\tt Thierry.Gallay@ujf-grenoble.fr}
\and
\\
{\bf Vladim\'ir \v{S}ver\'ak}\\
School of Mathematics\\
University of Minnesota\\
127 Vincent Hall,
206 Church St.\thinspace SE\\
Minneapolis, MN 55455, USA\\
{\tt sverak@math.umn.edu}}


\maketitle

\begin{abstract}
Motivated by applications to vortex rings, we study the Cauchy problem
for the three-dimensional axisymmetric Navier-Stokes equations without
swirl, using scale invariant function spaces. If the axisymmetric
vorticity $\omega_\theta$ is integrable with respect to the
two-dimensional measure $\D r\dd z$, where $(r,\theta,z)$ denote the
cylindrical coordinates in $\R^3$, we show the existence of a unique
global solution, which converges to zero in $L^1$ norm as $t \to
\infty$.  The proof of local well-posedness follows exactly the same
lines as in the two-dimensional case, and our approach emphasizes the
similarity between both situations. The solutions we construct have
infinite energy in general, so that energy dissipation cannot be
invoked to control the long-time behavior. We also treat the more
general case where the initial vorticity is a finite measure whose
atomic part is small enough compared to viscosity. Such data include
point masses, which correspond to vortex filaments in the
three-dimensional picture.
\end{abstract}

\section{Introduction}\label{sec1}

Among all three-dimensional incompressible flows, {\em axisymmetric
flows without swirl} form a particular class that is relatively
simple to study and yet contains interesting examples, such as
circular vortex filaments or toroidal vortex rings.  For the
evolutions defined by both the Euler and the Navier-Stokes equations,
global well-posedness in that class was established almost fifty years
ago by Ladyzhenskaya \cite{La} and Ukhovksii \& Yudovich \cite
{UY}. In the viscous case, the original approach of \cite{La,UY}
applies to velocity fields in the Sobolev space $H^2(\R^3)$, see
\cite{LMNP}, but it is possible to obtain the same conclusions under
the weaker assumption that the initial velocity belongs to
$H^{1/2}(\R^3)$ \cite{Ab}. In all these works, global existence
for arbitrary large data is shown by combining the standard energy
estimate, which holds for general solutions of the Navier-Stokes
equations, with a priori bounds on the vorticity that are specific
to the axisymmetric case.

In this paper, we revisit the Cauchy problem for the axisymmetric
Navier-Stokes equations (without swirl) for the following
reasons. First, motivated by a future study of vortex filaments, we
wish to formulate a global well-posedness result involving {\em scale
invariant} function spaces only. As was already mentioned, all
previous works deal with finite energy solutions, and energy is not a
scale invariant quantity for the three-dimensional viscous flows. In
particular, the long-time behavior of axisymmetric solutions has not
been studied in terms of scale invariant norms. Our second motivation
is to emphasize the analogy between the axisymmetric case and the
two-dimensional situation where the velocity field is planar and
depends on two variables only. Indeed, we shall see that, if
appropriate function spaces are used, local existence of solutions can
be established in the axisymmetric case using literally the same proof
as in the two-dimensional situation, which has been studied by many
authors \cite{BA,GMO,Ka2}. However, significant differences appear
when one considers a priori estimates and long-time asymptotics.

To formulate our results, we introduce some notation. The time
evolution of viscous incompressible flows is described by the
Navier-Stokes equations
\begin{equation}\label{NS3D}
  \partial_t u + (u\cdot \nabla)u \,=\, \Delta u - \nabla p~,
  \qquad \div u \,=\, 0~,
\end{equation}
where $u = u(x,t) \in \R^3$ denotes the velocity field and $p = p(x,t) \in \R$
the pressure field. For simplicity, we assume throughout this paper
that the kinematic viscosity and the fluid density are both equal to $1$.
We restrict ourselves to axisymmetric solutions without swirl for which
the velocity field has the following particular form\:
\begin{equation}\label{uaxi}
  u(x,t) \,=\, u_r(r,z,t) e_r + u_z(r,z,t) e_z~.
\end{equation}
Here $(r,\theta,z)$ are the usual cylindrical coordinates in $\R^3$,
defined by $x = (r\cos\theta,r\sin\theta,z)$ for any $x \in \R^3$,
and $e_r, e_\theta, e_z$ denote the unit vectors in the radial, toroidal,
and vertical directions, respectively\:
\[
  e_r \,=\, \begin{pmatrix}\cos\theta \\ \sin\theta \\0\end{pmatrix}~,
  \qquad e_\theta \,=\, \begin{pmatrix}-\sin\theta \\ \cos\theta \\0
  \end{pmatrix}~, \qquad e_z \,=\, \begin{pmatrix}0 \\ 0 \\1\end{pmatrix}~.
\]
We emphasize that the ``swirl''  $u \cdot e_\theta$ is assumed to vanish
identically. This means that the velocity field \eqref{uaxi} is not only
invariant under rotations about the vertical axis, but also under 
reflections by any plane containing the vertical axis. 

A direct calculation shows that the vorticity $\omega = \curl u$ associated
with the velocity field \eqref{uaxi} is purely toroidal\:
\begin{equation}\label{omaxi}
  \omega(r,z,t) \,=\, \omega_\theta(r,z,t)e_\theta~, \qquad
  \hbox{where}\quad \omega_\theta = \partial_z u_r - \partial_r u_z~.
\end{equation}
The flow is entirely determined by the single quantity
$\omega_\theta$, because the velocity field $u$ can be reconstructed
by solving the linear elliptic system
\begin{equation}\label{diffBS}
  \partial_r u_r + \frac{1}{r} u_r + \partial_z u_z \,=\, 0~,
  \qquad \partial_z u_r - \partial_r u_z \,=\, \omega_\theta~,
\end{equation}
in the half space $\Omega = \{(r,z) \in \R^2\,|\, r > 0\,,~z \in \R\}$,
with boundary conditions $u_r = \partial_r u_z = 0$ at $r = 0$.
System \eqref{diffBS} is the differential formulation of the axisymmetric
Biot-Savart law, which will be studied in more detail in Section~\ref{sec2}
below.

The evolution equation for $\omega_\theta$ reads
\begin{equation}\label{omeq}
  \partial_t \omega_\theta + u\cdot\nabla\omega_\theta -
  \frac{u_r}{r}\omega_\theta \,=\, \Delta \omega_\theta
  - \frac{\omega_\theta}{r^2}~,
\end{equation}
where $u\cdot\nabla = u_r\partial_r + u_z\partial_z$ and $\Delta =
\partial_r^2 + \frac{1}{r}\partial_r + \partial_z^2$ denotes the
Laplace operator in cylindrical coordinates. Equation \eqref{omeq} is
considered in the half-plane $\Omega$ with homogeneous Dirichlet condition
at the boundary $r = 0$. As was already observed in \cite{La,UY}, it is
useful to consider also the related quantity
\begin{equation}\label{etadef}
  \eta(r,z,t) \,=\, \frac{\omega_\theta(r,z,t)}{r}~,
\end{equation}
which satisfies the advection-diffusion equation
\begin{equation}\label{etaeq}
  \partial_t \eta + u\cdot\nabla\eta \,=\, \Delta \eta
  + \frac{2}{r}\partial_r\eta~,
\end{equation}
with homogeneous Neumann condition at the boundary $r = 0$. Systems
\eqref{omeq} and \eqref{etaeq} are, of course, perfectly
equivalent. In what follows we find it more convenient to work with
the axisymmetric vorticity equation \eqref{omeq}, at least to prove
local existence of solutions, but equation \eqref{etaeq} will be
useful to derive a priori estimates and to study the long-time
behavior.

Throughout this paper, to emphasize the similarity with the
two-dimensional case, we equip the half-plane $\Omega = \{(r,z)
\in \R^2\,|\, r > 0\,,~z \in \R\}$ with the {\em two-dimensional}
measure $\D r\dd z$, as opposed to the 3D measure $r\dd r\dd z$
which could appear more natural for axisymmetric problems.
Thus, given any $p \in [1,\infty)$, we denote by $L^p(\Omega)$ the
space of measurable functions $\omega_\theta : \Omega \to \R$ for
which the following norm is finite\:
\[
  \|\omega_\theta\|_{L^p(\Omega)} \,=\, \left(\int_\Omega
  |\omega_\theta(r,z)|^p \dd r\dd z\right)^{1/p}~, \qquad
  1 \le p < \infty~.
\]
The space $L^\infty(\Omega)$ is defined similarly. Sometimes,
however, it is more convenient to use the 3D measure $r\dd r\dd z$,
and the corresponding spaces are then denoted by $L^p(\R^3)$ to
avoid confusion. For instance, we define
\[
  \|\eta\|_{L^p(\R^3)} \,=\, \left(\int_\Omega
  |\eta(r,z)|^p \,r \dd r\dd z\right)^{1/p}~, \qquad
  1 \le p < \infty~.
\]

We are now in position to state our first main result.

\begin{thm}\label{main1}
For any initial data $\omega_0 \in L^1(\Omega)$, the
axisymmetric vorticity equation \eqref{omeq} has a
unique global mild solution
\begin{equation}\label{omprop}
  \omega_\theta \in C^0([0,\infty),L^1(\Omega)) \cap
  C^0((0,\infty),L^\infty(\Omega))~.
\end{equation}
The solution satisfies $\|\omega_\theta(t)\|_{L^1(\Omega)} \le 
\|\omega_0\|_{L^1(\Omega)}$ for all $t > 0$, and
\begin{align}\label{asym1}
  \lim_{t\to 0} t^{1-\frac1p}\|\omega_\theta(t)\|_{L^p(\Omega)}
  \,&=\, 0~, \qquad \hbox{for}\quad 1 < p \le \infty~, \\ \label{asym2}
  \lim_{t\to \infty} t^{1-\frac1p}\|\omega_\theta(t)\|_{L^p(\Omega)}
  \,&=\, 0~, \qquad \hbox{for}\quad 1 \le p \le \infty~.
\end{align}
If, in addition, the axisymmetric vorticity is non-negative 
and has finite impulse\:
\begin{equation}\label{cM-def}
 \cI \,=\, \int_\Omega r^2 \omega_0(r,z)\dd r\dd z \,<\, \infty~,
\end{equation} 
then
\begin{equation}\label{Mnon-V}
  \lim_{t \to \infty} t^2 \omega_\theta(r\sqrt{t},z\sqrt{t},t) \,=\,
  \frac{\cI}{16\sqrt{\pi}}\,r\,e^{-\frac{r^2+z^2}{4}}~, \qquad
  (r,z) \in \Omega~,
\end{equation}
where convergence holds in $L^p(\Omega)$ for $1 \le p \le \infty$.
In particular $\|\omega_\theta(t)\|_{L^p(\Omega)} = \cO(t^{-2+\frac1p})$
as $t \to \infty$ in that case.
\end{thm}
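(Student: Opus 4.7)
The proof decomposes into four components, which I treat in sequence. For \textbf{local existence and the global $L^1$ bound}, I would introduce the linear semigroup $e^{t\cL}$ generated by $\cL = \Delta - 1/r^2$ on $\Omega$ with Dirichlet condition at $r=0$; a key observation is that $\cL$ is exactly the 3D vector Laplacian restricted to the invariant subspace of toroidal axisymmetric vector fields $\omega_\theta e_\theta$, so its kernel comes straight from the 3D heat kernel and enjoys the smoothing bounds $\|e^{t\cL}f\|_{L^q(\Omega)} \le C t^{-(1/p-1/q)}\|f\|_{L^p(\Omega)}$ in the 2D measure. Rewriting the nonlinearity as $u\cdot\nabla\omega_\theta - (u_r/r)\omega_\theta = \nabla_{2D}\cdot(u\omega_\theta)$ via the cylindrical incompressibility, the Duhamel formula combined with the Biot--Savart estimates of Section~\ref{sec2} is contracted in a scale-invariant space carrying the weight $t^{1-1/p}$, giving local existence, uniqueness, and \eqref{asym1} exactly as in the 2D theory of \cite{BA,GMO,Ka2}. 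To globalize I would pass to $\eta=\omega_\theta/r$, test \eqref{etaeq} against $|\eta|^{p-2}\eta$ with the 3D measure $r\dd r\dd z$ (the transport term vanishes by 3D incompressibility), and integrate by parts to obtain
$$\frac{d}{dt}\|\eta(t)\|_{L^p(\R^3)}^p = -p(p-1)\int_\Omega r|\eta|^{p-2}|\nabla\eta|^2\dd r\dd z - 2\int_\R|\eta(0,z,t)|^p\dd z \,\le\, 0,$$
so $\|\omega_\theta(t)\|_{L^1(\Omega)}=\|\eta(t)\|_{L^1(\R^3)}$ is monotonically non-increasing, ruling out blow-up.

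For the \textbf{long-time decay} \eqref{asym2}, the Duhamel formula with the $L^1$ bound yields $\|\omega_\theta(t)\|_{L^p(\Omega)}\lesssim t^{-(1-1/p)}\|\omega_\theta(t/2)\|_{L^1(\Omega)}$, so it suffices to show $\|\omega_\theta(t)\|_{L^1}\to 0$. I would first establish this on the dense subclass of non-negative compactly supported data, where \eqref{Mnon-V} delivers the sharp rate $\cO(1/t)$, and then extend to general $L^1$ data by a compactness/rigidity argument: if $\alpha := \lim_{t\to\infty}\|\omega_\theta(t)\|_{L^1}$ were positive, the NS-rescaled family $\tilde\omega_T(r,z,t) = T\omega_\theta(\sqrt T r,\sqrt T z,Tt)$ would have uniform scale-invariant $L^p$ bounds and extract a subsequential mild solution $\bar\omega$ with $\|\bar\omega(t)\|_{L^1}\equiv\alpha$; equality in the dissipation identity would force $\bar\eta|_{r=0}\equiv 0$, and a rigidity argument (using also the $L^p$ identities for $p>1$) would conclude $\bar\omega\equiv 0$, contradicting $\alpha>0$.

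The \textbf{self-similar asymptotics} \eqref{Mnon-V} is the heart of the theorem and its main technical obstacle. The crucial observation is a five-dimensional reinterpretation: the operator $\Delta+(2/r)\partial_r$ in \eqref{etaeq} is exactly $\Delta_{\R^5}$ acting on $SO(4)$-invariant functions $\eta(|x'|,z)$ with $x'\in\R^4$, so \eqref{etaeq} is a genuine 5D advection--diffusion equation with 3D-divergence-free drift. A direct integration by parts (using $\omega_\theta=\partial_zu_r-\partial_ru_z$, the cylindrical incompressibility, and boundary vanishing) shows that $\cI=\int_\Omega r^2\omega_\theta\dd r\dd z$ is conserved, and it coincides, up to the factor $|S^3|=2\pi^2$, with the 5D mass $\cM := \int_{\R^5}\eta\dd x = 2\pi^2\cI$. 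With the mass-preserving zoom $\hat\eta_T(x,t)=T^{5/2}\eta(\sqrt T x,Tt)$ and the NS-rescaled velocity $\tilde u_T(x,t)=\sqrt T\,u(\sqrt T x,Tt)$, the pair $(\hat\eta_T,\tilde u_T)$ satisfies $\partial_t\hat\eta_T+\tilde u_T\cdot\nabla\hat\eta_T=\Delta_{\R^5}\hat\eta_T$, preserves the 5D mass $\cM$, and has initial data $\hat\eta_T(\cdot,0)\to\cM\delta_0$ as $T\to\infty$. Using a priori $L^p$ decay of $\omega_\theta$ together with the Biot--Savart bounds of Section~\ref{sec2} to show $\tilde u_T\to 0$ in a norm that allows passage to the limit, parabolic compactness then forces $\hat\eta_T(\cdot,1)$ to converge to the linear 5D heat evolution of $\cM\delta_0$ at time $1$, namely $\cM(4\pi)^{-5/2}e^{-(r^2+z^2)/4}$. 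Translating through $\omega_\theta=r\eta$ and using the identity $2\pi^2/(4\pi)^{5/2}=1/(16\sqrt\pi)$ recovers the profile \eqref{Mnon-V}, and interpolation with the $L^1$ rate upgrades this to $L^p(\Omega)$ convergence for every $p$. The main obstacle is the quantitative negligibility of $\tilde u_T$: the scaling weight $\sqrt T$ partially offsets the intrinsic decay of $u$, so one must bootstrap a priori $L^p$ estimates on $\omega_\theta$ alongside the Biot--Savart inequalities to close the argument.
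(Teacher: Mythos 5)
Your overall architecture (contraction in a scale-invariant space, monotone $L^1$ norm, rescaling/rigidity for \eqref{asym2}, self-similar limit for \eqref{Mnon-V}) matches the paper, and your $5$D reading of \eqref{etaeq} for the final asymptotics is a genuinely different and attractive route whose constant $2\pi^2/(4\pi)^{5/2}=1/(16\sqrt{\pi})$ does check out. But there is one genuine gap at the center of the argument: the claim that monotonicity of $\|\omega_\theta(t)\|_{L^1(\Omega)}$ ``rules out blow-up.'' It does not. $L^1(\Omega)$ is a \emph{critical} space here, and the local existence time produced by the fixed point argument cannot be bounded below in terms of $\|\omega_0\|_{L^1(\Omega)}$ alone (only smallness of $\|S(t)\omega_0\|$ in the subcritical norm over $(0,T]$ gives a contraction, and that smallness is not quantified by the $L^1$ norm for large data). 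To continue the solution past time $T$ you need a bound on a \emph{subcritical} quantity, e.g.\ $\|\omega_\theta(t)\|_{L^p(\Omega)}$ for some $p>1$, that cannot blow up in finite time. This is precisely the content of the paper's key a priori estimate $\sup_t t^{1-1/p}\|\omega_\theta(t)\|_{L^p(\Omega)}\le C_p(\|\omega_0\|_{L^1})$, which is obtained by combining (a) the Nash-type smoothing bound $\|\eta(t)\|_{L^\infty(\R^3)}\le Ct^{-3/2}\|\eta_0\|_{L^1(\R^3)}$, (b) the Biot--Savart bound $\|u_r/r\|_{L^\infty}\le C\|\omega_\theta\|_{L^1}^{1/3}\|\omega_\theta/r\|_{L^\infty}^{2/3}$, giving $\|u_r(t)/r\|_{L^\infty}\le CM/t$, and (c) a Nash-inequality differential inequality for $\|\omega_\theta(t)\|_{L^2(\Omega)}^2$ absorbing the stretching term $\int (u_r/r)\omega_\theta^2$. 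Your $L^p$ energy identities for $\eta$ do not substitute for this: they presuppose $\eta_0\in L^p(\R^3)$ (only $\eta_0\in L^1(\R^3)$ is available), and even granted all the $\eta$ bounds, a control of $\|\eta\|_{L^\infty}=\|\omega_\theta/r\|_{L^\infty}$ says nothing about $\|\omega_\theta\|_{L^\infty(\Omega)}$ for large $r$; step (c) is indispensable and is missing from your proposal.

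Two smaller points. First, in the rescaling argument for $\|\omega_\theta(t)\|_{L^1}\to 0$, relative compactness in $L^1(\Omega)$ requires tightness as well as equicontinuity; the paper supplies this through a separate confinement estimate showing the vorticity lives in a ball of radius $\cO(\sqrt t)$ (proved by comparison with translated one-dimensional heat flows, using $\|u(t)\|_{L^\infty}\le C/\sqrt t$). Your sketch omits this, and your alternative ``dense subclass of non-negative compactly supported data'' route does not obviously close, since positivity is not dense and the nonlinear flow map must be controlled uniformly in time for a density argument. Second, for \eqref{Mnon-V} your own scaling shows the difficulty: the scale-invariant bound $\|u(s)\|_{L^\infty}\le C/\sqrt s$ gives only $\|\tilde u_T(\cdot,t)\|_{L^\infty}\le C/\sqrt t$, uniformly bounded but \emph{not} small, so the drift does not vanish in the limit without first proving the improved decay $\|\omega_\theta(t)\|_{L^1}=\cO(1/t)$. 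That decay is exactly what the paper extracts by bootstrapping an integral inequality built from the conservation of $\cI$ and the weighted semigroup/Biot--Savart estimates, after which it shows directly that $t\,\|\omega_\theta(t)-S(t-t_0)\omega_\theta(t_0)\|_{L^1}\le C/\sqrt{t_0}$ and reads off the profile from the explicit linear kernel. You correctly identify this as the main obstacle but do not resolve it, so as written the asymptotics also rest on an unproved estimate.
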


\begin{rem}\label{mildrem}
A mild solution of \eqref{omeq} on $\R_+ = [0,\infty)$ is a solution
of the associated integral equation, namely Eq.~\eqref{omint} below.
As will be verified in Section~\ref{sec4}, both sides of \eqref{omint}
are well-defined if $\omega_\theta$ satisfies \eqref{omprop} and
\eqref{asym1}. In the uniqueness claim, we only assume that
$\omega_\theta$ satisfies \eqref{omprop} and is a mild solution
of \eqref{omeq} for strictly positive times. The proof then
shows that \eqref{asym1} automatically holds.
\end{rem}

The statement of Theorem~\ref{main1} has several aspects, and it is
worth discussing them separately. The {\it local well-posedness} claim
is certainly not surprising, because the class of initial data we
consider is covered by at least two existence results in the
literature. Indeed, if $\omega_\theta \in L^1(\Omega)$, it is easy
to verify that the vorticity $\omega = \omega_\theta e_\theta$
belongs to the Morrey space $M^{3/2}(\R^3)$ defined by the norm
\[
  \|\omega\|_{M^{3/2}} \,=\, \sup_{x \in \R^3}\,
  \sup_{R >0}\,\frac{1}{R}\int_{B(x,R)} |\omega(x)|\dd x~,
\]
where $B(x,R) \subset \R^3$ denotes the ball or radius $R > 0$
centered at $x \in \R^3$. In addition $\omega$ can be approximated in
$M^{3/2}(\R^3)$ by smooth and compactly supported functions. As was
proved by Giga \& Miyakawa \cite{GM}, the Navier-Stokes equations in
$\R^3$ thus have a unique local solution with initial vorticity
$\omega$, which is even global in time if the norm $\|\omega\|_{M^{3/2}}$
is sufficiently small. On the other hand, under the same assumption on
the vorticity, one can show that the velocity field $u$ given by the
Biot-Savart law in $\R^3$ belongs to the space $\BMO^{-1}(\R^3)$
defined by the norm
\[
  \|u\|_{\BMO^{-1}} \,=\, \sup_{x \in \R^3}\,\sup_{R >0} \left(\frac{1}{R^3}
  \int_{B(x,R)} \int_0^{R^2} |e^{t\Delta}u|^2\dd t\dd x\right)^{1/2}~,
\]
where $e^{t\Delta}$ denotes the heat semigroup in $\R^3$. In fact $u$
can be approximated by smooth and compactly supported functions in
$\BMO^{-1}(\R^3)$, so that $u \in \VMO^{-1}(\R^3)$.  Thus we can also
invoke the celebrated result by Koch \& Tataru \cite{KT} to obtain the
existence of a unique local solution to the Navier-Stokes equation in
$\R^3$, which is again global in time if the norm $\|u\|_{\BMO^{-1}}$
is sufficiently small. In contrast to the general results in
\cite{GM,KT}, the approach we follow to solve the Cauchy problem
for Eq.~\eqref{omeq} uses specific features of the axisymmetric case.
As we shall see in Section~\ref{sec4}, it is elementary and
completely parallel to the two-dimensional situation which was
studied e.g. in \cite{BA,Ga}.

The assertion of {\em global well-posedness} in Theorem~\ref{main1} is
also quite natural in view of the historical results by Ladyzhenskaya
\cite{La} and Ukhovkii \& Yudovich \cite{UY}. As in \cite{La,UY} we
use the structure of equation \eqref{etaeq} to derive a priori
estimates on the quantity $\eta$ in $L^p(\R^3)$, for $1 \le p \le \infty$.
However, since the solutions we consider do not have finite energy in
general, we cannot apply the classical energy estimate to obtain a
uniform bound on the velocity field in $L^2(\R^3)$. Instead we prove
that any solution of \eqref{omeq} with initial data $\omega_0 \in
L^1(\Omega)$ satisfies $\|\omega_\theta(t)\|_{L^1(\Omega)} \le
\|\omega_0\|_{L^1(\Omega)}$ for all $t > 0$ and
\begin{equation}\label{newom}
  \sup_{t > 0}\,t \|\omega_\theta(t)\|_{L^\infty(\Omega)} \,\le\,
  C\bigl(\|\omega_0\|_{L^1(\Omega)}\bigr)~,
\end{equation}
where $C(s) = \cO(s)$ as $s \to 0$. This new a priori estimate
is scale invariant, and implies that all solutions of \eqref{omeq}
with initial data in $L^1(\Omega)$ are global. Using the axisymmetric
Biot-Savart law, we also deduce the following optimal bound on
the velocity field\:
\begin{equation}\label{newu}
  \sup_{t > 0}\,t^{1/2} \|u(t)\|_{L^\infty(\Omega)} \,\le\,
  C\bigl(\|\omega_0\|_{L^1(\Omega)}\bigr)~.
\end{equation}

Our last comment on Theorem~\ref{main1} concerns the {\em long-time
  behavior}, which differs significantly from what happens in the
two-dimensional case. In the latter situation, the $L^1$ norm of the
vorticity is non-increasing in time, but does not converge to zero in
general (in particular, it is constant for solutions with a definite
sign). The long-time behavior is described by self-similar solutions,
called Oseen vortices, which have a nonzero total circulation
\cite{GW1,GW2}. In contrast, the axisymmetric vorticity
$\omega_\theta$ vanishes on the boundary of the half-plane $\Omega$,
so that the $L^1$ norm is strictly decreasing for all nontrivial
solutions. As we shall see in Section~\ref{sec6}, this implies that
$\|\omega_\theta(t)\|_{L^1(\Omega)} \to 0$ when $t \to \infty$, as
asserted in \eqref{asym2}. In other words, the long-time behavior of
the axisymmetric vorticity is trivial when measured in scale invariant
function spaces. More can be said when the initial data have a
definite sign and a finite impulse, given by \eqref{cM-def}. In that
case, the axisymmetric vorticity $\omega_\theta$ inherits the same
properties for all positive times and converges as $t \to \infty$ to a
self-similar solution of the linearized equation \eqref{omlin}, whose
profile is explicitly determined in \eqref{Mnon-V}.

As in the two-dimensional case, it is possible to extend the local
existence claim in Theorem~\ref{main1} to a larger class of
initial data, so as to include finite measures as initial vorticities.
Let $\cM(\Omega)$ denote the set of all real-valued finite measures
on the half-plane $\Omega$, equipped with the total variation norm
\[
  \|\mu\|_\tv \,=\, \sup\left\{ \int_\Omega \phi \dd\mu \,\Big|\,
   \phi \in C_0(\Omega)\,,~ \|\phi\|_{L^\infty(\Omega)} \le 1
   \right\}~, \qquad \hbox{for}~ \mu \in \cM(\Omega)~,
\]
where $C_0(\Omega)$ is the set of all real-valued continuous functions
on $\Omega$ that vanish at infinity and on the boundary
$\partial\Omega$. If $\mu \in \cM(\Omega)$ is absolutely continuous
with respect to Lebesgue's measure, then $\mu = \omega_\theta \dd r\dd
z$ for some $\omega_\theta \in L^1(\Omega)$, and $\|\mu\|_\tv =
\|\omega_\theta\|_{L^1(\Omega)}$. More generally, one can decompose
any $\mu \in \cM(\Omega)$ as $\mu = \mu_{ac} + \mu_{sc} + \mu_{pp}$,
where $\mu_{ac}$ is absolutely continuous with respect to Lebesgue's
measure, $\mu_{pp}$ is a countable collection of Dirac masses, and
$\mu_{sc}$ has no atoms but is supported on a set of zero Lebesgue
measure.  In the original three-dimensional picture, each Dirac mass
in the atomic part $\mu_{pp}$ corresponds to a circular vortex
filament, whereas vortex sheets are included in the singularly
continuous part $\mu_{sc}$.

The proof of Theorem~\ref{main1} can be adapted to initial vorticities
in $\cM(\Omega)$, and gives the following statement, which is our
second main result.

\begin{thm}\label{main2}
There exist positive constants $\epsilon$ and $C$ such that,
for any initial data $\omega_0 \in \cM(\Omega)$ with
$\|(\omega_0)_\pp\|_\tv \le \epsilon$, the axisymmetric vorticity
equation \eqref{omeq} has a unique global mild solution
$\omega_\theta \in C^0((0,\infty),L^1(\Omega) \cap
L^\infty(\Omega))$ satisfying
\begin{equation}\label{main2prop}
   \limsup_{t\to 0}\|\omega_\theta(t)\|_{L^1(\Omega)} \,<\, \infty~, \qquad
   \limsup_{t\to 0} t^{1/4}\|\omega_\theta(t)\|_{L^{4/3}(\Omega)} 
   \,\le\, C\epsilon~,
\end{equation}
and such that $\omega_\theta(t) \weakto \omega_0$ as $t \to 0$.
Moreover, the asymptotic estimates for $t\to\infty$ given in 
Theorem~\ref{main1} hold without change. 
\end{thm}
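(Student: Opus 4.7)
The approach is to adapt the fixed-point construction for the 2D vorticity equation with finite measure initial data (in the spirit of Giga-Miyakawa-Osada and Kato) to the present axisymmetric setting, while reusing the machinery already developed in Sections~\ref{sec2} and~\ref{sec4}. Let $\{S(t)\}_{t\ge 0}$ denote the linear semigroup on $\Omega$ obtained by dropping the quadratic term from \eqref{omeq} (acting with homogeneous Dirichlet condition at $r=0$), and write any candidate mild solution in Duhamel form as
\[
\omega_\theta(t) \,=\, S(t)\omega_0 + \int_0^t S(t-s)\, N(\omega_\theta(s))\dd s~,
\]
where $N(\omega_\theta) = -u\cdot\nabla\omega_\theta + (u_r/r)\omega_\theta$ and the velocity $u$ is recovered from $\omega_\theta$ via the axisymmetric Biot-Savart law of Section~\ref{sec2}.

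The key new ingredient is the sharp linear estimate
\[
\sup_{t>0} t^{1-\frac1p}\|S(t)\mu\|_{L^p(\Omega)} \,\le\, C_0\|\mu\|_\tv~, \qquad
\limsup_{t\to 0} t^{1-\frac1p}\|S(t)\mu\|_{L^p(\Omega)} \,\le\, C_0\|\mu_\pp\|_\tv~,
\]
valid for every $\mu\in\cM(\Omega)$ and $1\le p\le\infty$. The uniform bound rests on a pointwise Gaussian estimate for the kernel of $S(t)$, which is dominated by the 2D heat kernel on $\R^2$ since the Dirichlet condition at $r=0$ and the $-r^{-2}$ potential can only decrease the kernel. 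For the limsup one splits $\mu = \mu_c + \mu_\pp$ with $\mu_c = \mu_{ac}+\mu_{sc}$: a direct computation on a single Dirac mass yields the desired bound for $\|S(t)\mu_\pp\|_{L^p}$, while the absence of atoms in $\mu_c$ gives $\sup_x|\mu_c|(B(x,\sqrt{t}))\to 0$ as $t\to 0$ (after truncating to a compact set of large $|\mu_c|$-mass), which upon reinsertion into the Gaussian kernel bound forces $t^{1-\frac1p}\|S(t)\mu_c\|_{L^p}\to 0$.

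With these estimates in hand, one defines the Banach space
\[
X_T \,=\, \Bigl\{ \omega_\theta \in C^0((0,T],L^1(\Omega)\cap L^\infty(\Omega)) \: \|\omega_\theta\|_{X_T}<\infty\Bigr\}~,
\]
with norm $\|\omega_\theta\|_{X_T} = \sup_{0<t\le T}\|\omega_\theta(t)\|_{L^1(\Omega)} + \sup_{0<t\le T} t^{1/4}\|\omega_\theta(t)\|_{L^{4/3}(\Omega)}$. The Biot-Savart bound $\|u\|_{L^4(\Omega)}\le C\|\omega_\theta\|_{L^{4/3}(\Omega)}$ from Section~\ref{sec2}, combined with the linear estimate above, yields the standard bilinear control on the Duhamel term in $X_T$, with quadratic prefactor proportional to the weighted $L^{4/3}$ component of the norm. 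Choosing $\epsilon$ so small that $C_0\epsilon$ is below the threshold imposed by the bilinear estimate then forces contraction of the Duhamel map in a small ball around $t\mapsto S(t)\omega_0$, producing the unique mild solution satisfying \eqref{main2prop}. The weak-$*$ convergence $\omega_\theta(t)\weakto\omega_0$ at $t=0$ follows from the corresponding property of $t\mapsto S(t)\omega_0$ on $\cM(\Omega)$ together with the fact that the Duhamel correction vanishes strongly in $L^1(\Omega)$ as $t\to 0$.

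For globalization and the long-time asymptotics, one invokes the instantaneous smoothing built into $X_T$: for any fixed $t_0>0$ the constructed solution belongs to $L^1(\Omega)\cap L^\infty(\Omega)$, so Theorem~\ref{main1} applied from initial time $t_0$ extends it uniquely to $[t_0,\infty)$ and transfers the decay estimates \eqref{asym1}-\eqref{asym2} verbatim. The hardest step will be the sharp limsup bound in terms of $\|\mu_\pp\|_\tv$: the half-plane geometry and the singular potential $-r^{-2}$ break translation invariance, so one must handle separately atoms located at strictly positive distances from the axis (where $S(t)$ essentially mimics the planar heat semigroup) and possible sequences of atoms accumulating toward $r=0$ (where the Dirichlet boundary condition and the repulsive potential are in fact allies, since they force additional vanishing of the kernel near the axis).
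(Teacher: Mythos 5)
Your proposal follows the same architecture as the paper's proof: the two linear estimates for measures (\eqref{Sest3} and \eqref{Sest4} in Proposition~\ref{Sprop3}), a contraction argument in the scale-invariant space built on $t^{1/4}\|\cdot\|_{L^{4/3}(\Omega)}$ whose small-ball condition is governed, as $T \to 0$, only by the atomic part of $\omega_0$, and globalization by invoking Theorem~\ref{main1} from any positive time $t_0$. The genuine gap is in the step you dispose of in one clause: the claim that ``the Duhamel correction vanishes strongly in $L^1(\Omega)$ as $t\to 0$.'' When $(\omega_0)_{\pp} \neq 0$ this is exactly the point where the naive estimate breaks down. The bilinear bound on $\cF\omega_\theta$ only gives $\|(\cF\omega_\theta)(t)\|_{L^1(\Omega)} \le C\|\omega_\theta\|_{X_t}^2$, and -- unlike the case of integrable data -- the quantity $\|\omega_\theta\|_{X_t}$ does \emph{not} tend to zero as $t \to 0$; its limit is of order $\|(\omega_0)_{\pp}\|_\tv$, small but nonzero. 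You therefore only obtain $\limsup_{t\to 0}\|(\cF\omega_\theta)(t)\|_{L^1(\Omega)} \le C\epsilon^2$, which is not sufficient to conclude that $\omega_\theta(t) \weakto \omega_0$: it only shows that weak limit points lie within $C\epsilon^2$ of $\omega_0$. The paper closes this gap with a cancellation lemma, $\ell_p(\mu) := \limsup_{t\to 0} t^{1-1/p}\|\cF\omega_\lin(t)\|_{L^p(\Omega)} = 0$, which exploits the oddness structure of $u_\lin\cdot\nabla\omega_\lin$ for small times (as in the two-dimensional theory), and then feeds this into the inequality $\delta \le 2C_3R\,\delta + \ell_{4/3}(\mu)$ to force $\delta = 0$. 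Without an argument of this type the weak-convergence claim of the theorem is not established.

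Two smaller inaccuracies occur in your linear step. First, the limsup estimate cannot hold for $p = 1$: for a nonatomic positive measure supported away from the axis, $\|S(t)\mu\|_{L^1(\Omega)} \to \|\mu\|_\tv$ as $t \to 0$, not to something controlled by $\|\mu_{\pp}\|_\tv$; the paper correctly restricts \eqref{Sest4} to $1 < p \le \infty$. This is harmless since only $p = 4/3$ enters the contraction. Second, the asserted domination of the kernel of $S(t)$ by the planar heat kernel ``because the Dirichlet condition and the potential $-r^{-2}$ can only decrease it'' overlooks the drift term $\frac1r\partial_r$ in \eqref{omlin}: the explicit kernel \eqref{Sdef} carries the prefactor $(\bar r/r)^{1/2}H(t/(r\bar r))$, which exceeds $1$ when $\bar r \gg r$, so the kernel is \emph{not} pointwise below $\frac{1}{4\pi t}e^{-|\xi-\bar\xi|^2/(4t)}$. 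The correct statement is \eqref{point1}, where the excess is absorbed by degrading the Gaussian rate from $1/(4t)$ to $1/(5t)$. The bound you need is true, but the comparison-principle justification you offer for it is not.
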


Observe that we now have a limitation on the size of the data, which
however only affects the atomic part of the initial vorticity.  This
technical restriction inevitably occurs if local existence is
established using a fixed point argument in scale invariant spaces, as
we do in Section~\ref{sec4}. In the two-dimensional case, early
results by Giga, Miyakawa, \& Osada \cite{GMO} and by Kato \cite{Ka2}
had a similar limitation, which was then relaxed in \cite{GW2,GG}
using completely different techniques.  In the axisymmetric situation,
existence of a global solution to \eqref{omeq} with a large Dirac mass
as initial vorticity has recently been established by Feng and \v
Sver\'ak \cite{FS}, using an approximation argument, but uniqueness is
still under investigation. For a general initial vorticity $\omega_0
\in \cM(\Omega)$, both existence and uniqueness are open.

Even if we restrict ourselves to initial vorticities with a small
atomic part, the uniqueness claim in Theorem~\ref{main2} is probably
not optimal. Indeed, although the solutions we construct satisfy both
estimates in \eqref{main2prop}, we believe that uniqueness should hold
(as in the two-dimensional case) under the sole assumptions that
$\omega_\theta(t)$ is uniformly bounded in $L^1(\Omega)$ for $t > 0$
and converges weakly to the initial vorticity $\omega_0$ as $t \to 0$.
However, technical difficulties arise when adapting the
two-dimensional proof to the axisymmetric case, and for the moment we
need an additional assumption, such as the second estimate in
\eqref{main2prop}, to obtain uniqueness. We hope to clarify that
question in a future work.

The rest of this paper is organized as follows. In Section~\ref{sec2},
which is devoted to the axisymmetric Biot-Savart law, we estimate
various norms of the velocity field $u$ in terms of the axisymmetric
vorticity $\omega_\theta$. In Section~\ref{sec3}, we show that the
semigroup generated by the linearization of \eqref{omeq} about the
origin satisfies the same $L^p - L^q$ estimates as the two-dimensional
heat kernel. After these preliminaries, we prove the local existence
claims in Theorems~\ref{main1} and \ref{main2} in Sections~\ref{sec41}
and \ref{sec42}, respectively. Global existence follows from a priori
estimates which are established in Section~\ref{sec5}. Finally, the
long-time behavior is investigated in Section~\ref{sec6}. We prove
that all solutions of \eqref{omeq} converge to zero in $L^1(\Omega)$,
and we also compute the leading term in the long-time asymptotics for
vorticities with a definite sign and a finite impulse.

\medskip\noindent{\bf Acknowledgements.} This project started during
visits of the first named author to the University of Minnesota, 
whose hospitality is gratefully acknowledged. Our research was 
supported in part by grants DMS 1362467 and DMS 1159376 from the 
National Science Foundation (VS) and by the grant ``Dyficolti'' 
ANR-13-BS01-0003-01 from the French Ministry of Research (ThG). 

\section{The axisymmetric Biot-Savart law}\label{sec2}

In this section, we assume that the axisymmetric vorticity
$\omega_\theta : \Omega \to \R$ is given, and we study the properties
of the velocity field $u = (u_r,u_z)$ satisfying the linear elliptic
system \eqref{diffBS}. The divergence-free condition $\partial_r(r
u_r) + \partial_z (ru_z) = 0$ implies that there exists a function
$\psi : \Omega \to \R$ such that
\begin{equation}\label{upsi}
  u_r \,=\, -\frac{1}{r}\frac{\partial \psi}{\partial z}~,
  \qquad
  u_z \,=\, \frac{1}{r}\frac{\partial \psi}{\partial r}~.
\end{equation}
As $\partial_z u_r - \partial_r u_z =\omega_\theta$, the {\em
axisymmetric stream function} $\psi$ satisfies the following linear
elliptic equation in the half-space $\Omega$\:
\begin{equation}\label{psieq}
  -\partial_r^2 \psi + \frac{1}{r}\partial_r \psi - \partial_z^2 \psi
  \,=\, r \omega_\theta~.
\end{equation}
Boundary conditions for \eqref{psieq} are determined by observing
that, for a smooth axisymmetric vector field $u : \R^3 \to \R^3$ with
$\div u = 0$, the stream function defined by \eqref{upsi} satisfies
the asymptotic expansion
\begin{equation}\label{psireg}
  \psi(r,z) \,=\, \psi_0 + r^2 \psi_2(z) + \cO(r^4)~, \qquad
  \hbox{as } r \to 0~,
\end{equation}
see \cite{LW} for an extensive discussion of these regularity
issues. Without loss of generality, we can assume that the constant
$\psi_0$ in \eqref{psireg} is equal to zero, in which case we
conclude that $\psi(0,z) = \partial_r \psi(0,z) = 0$.

The solution of \eqref{psieq} with these boundary conditions
is well known, see e.g. \cite{FS}. If we assume that the vorticity
$\omega_\theta$ decays sufficiently fast at infinity, we have
the explicit representation
\begin{equation}\label{BSpsi}
  \psi(r,z) \,=\, \frac{1}{2\pi}\int_\Omega \sqrt{r \bar r}\,
  F\left(\frac{(r-\bar r)^2 + (z-\bar z)^2}{r\bar r}\right)
  \omega_\theta(\bar r,\bar z)\dd\bar r \dd\bar z~,
\end{equation}
where the function $F : (0,\infty) \to \R$ is defined by
\begin{equation}\label{Fdef}
  F(s) \,=\, \int_0^\pi \frac{\cos\phi\,\dd\phi}{\bigl({2(1{-}\cos\phi)+s}
  \bigr)^{1/2}} \,=\, \int_0^{\pi/2} \frac{\cos(2\phi)\,\dd\phi}{
  \bigl(\sin^2\phi+s/4\bigr)^{1/2}}~, \qquad s > 0~.
\end{equation}
Useful properties of $F$ are collected in the following
lemma, whose proof can be found in \cite[Section~19]{Sv}.

\begin{lem}\label{Fproperties}
The function $F : (0,\infty) \to \R$ defined by \eqref{Fdef}
is decreasing and satisfies the asymptotic expansions\:
\\[2mm]
i) $\DS F(s) = \log\Bigl(\frac{8}{\sqrt{s}}\Bigr) -2 + \cO\Bigl(s
\log\frac1s\Bigr)$ and $\DS F'(s) = -\frac{1}{2s} + \cO\Bigl(
\log\frac1s\Bigr)$ as $s \to 0$; \\[2mm]
ii) $\DS F(s) = \frac{\pi}{2s^{3/2}} + \cO\Bigl(\frac{1}{s^{5/2}}\Bigr)$
and $\DS F'(s) = -\frac{3\pi}{4s^{5/2}} + \cO\Bigl(\frac{1}{s^{7/2}}\Bigr)$
as $s \to \infty$.
\end{lem}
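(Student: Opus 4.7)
The plan is to rewrite $F$ in a manifestly signed form by a single integration by parts, then treat the two asymptotic regimes by quite different techniques: a routine Taylor expansion for $s\to\infty$, and a careful splitting of the logarithmic singularity for $s\to 0$.

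First I would start from the second representation of $F$ in \eqref{Fdef}, writing $\cos(2\phi)\,d\phi = \tfrac12 d(\sin 2\phi)$ and integrating by parts against $(\sin^2\phi+s/4)^{-1/2}$. The boundary terms vanish because $\sin(2\phi)$ vanishes at both endpoints, and using $\sin(2\phi)=2\sin\phi\cos\phi$ gives
\[
F(s) \,=\, \int_0^{\pi/2}\frac{\sin^2\phi\,\cos^2\phi}{(\sin^2\phi+s/4)^{3/2}}\,\D\phi\,.
\]
Differentiating under the integral sign, or repeating the same integration by parts on the formal derivative of the original $F$, yields
\[
F'(s) \,=\, -\frac{3}{8}\int_0^{\pi/2}\frac{\sin^2\phi\,\cos^2\phi}{(\sin^2\phi+s/4)^{5/2}}\,\D\phi\,,
\]
which is manifestly negative, so $F$ is strictly decreasing. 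For the large-$s$ asymptotics these two representations are ideal: since $\sin^2\phi \le 1 \ll s/4$, I expand $(\sin^2\phi+s/4)^{-\nu} = (4/s)^\nu(1 - 4\nu\sin^2\phi/s + \cO(s^{-2}))$ and integrate term by term using $\int_0^{\pi/2}\sin^2\phi\cos^2\phi\,\D\phi = \pi/16$, which directly produces $F(s) = \pi/(2s^{3/2}) + \cO(s^{-5/2})$ and $F'(s) = -3\pi/(4s^{5/2}) + \cO(s^{-7/2})$.

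The main obstacle is the $s\to 0$ expansion, where the integration-by-parts form is no longer helpful because the singularity sits at $\phi = 0$. I would substitute $u = \sin\phi$ in the original expression to obtain
\[
F(s) \,=\, \int_0^1 \frac{(1-2u^2)\,\D u}{\sqrt{1-u^2}\,\sqrt{u^2+s/4}} \,=\, I_1(s) - 2\,I_2(s)\,,
\]
where $I_1$ and $I_2$ denote the integrals with numerators $1$ and $u^2$ respectively. The integral $I_2(s)$ is easy: its integrand is uniformly bounded near $u=0$, so $I_2(s) \to \int_0^1 u\,\D u/\sqrt{1-u^2} = 1$ as $s \to 0$, and comparing $(u^2+s/4)^{-1/2}$ with $1/u$ shows that the error is $\cO(s\log(1/s))$. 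For the logarithmically divergent $I_1$, I subtract the weight $1/\sqrt{1-u^2}$ to isolate the singularity:
\[
I_1(s) \,=\, \int_0^1 \frac{\D u}{\sqrt{u^2+s/4}} + \int_0^1\Bigl(\frac{1}{\sqrt{1-u^2}}-1\Bigr)\frac{\D u}{\sqrt{u^2+s/4}}\,.
\]
The first term is elementary: $\log\bigl((1+\sqrt{1+s/4})/(\sqrt{s}/2)\bigr) = \log(4/\sqrt{s}) + \cO(s)$. In the second term the bracket vanishes like $u^2/2$ at $0$, so dominated convergence yields the limit $\int_0^1(1-\sqrt{1-u^2})\,\D u/(u\sqrt{1-u^2})$, which via $v=\sqrt{1-u^2}$ equals $\int_0^1 \D v/(1+v) = \log 2$, again with error $\cO(s\log(1/s))$. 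Collecting, $F(s) = \log(4/\sqrt{s}) + \log 2 - 2 + \cO(s\log(1/s)) = \log(8/\sqrt{s}) - 2 + \cO(s\log(1/s))$.

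For $F'(s)$ at small $s$ I would differentiate the $u$-representation under the integral, giving $F'(s) = -\tfrac18\int_0^1(1-2u^2)(1-u^2)^{-1/2}(u^2+s/4)^{-3/2}\,\D u$, and repeat the same splitting: the leading contribution from $u$ near $0$ comes from $-\tfrac18\int_0^1 \D u/(u^2+s/4)^{3/2}$, which via the explicit antiderivative $u/((s/4)\sqrt{u^2+s/4})$ equals $-\tfrac18\cdot(4/s)(1+\cO(s)) = -1/(2s) + \cO(1)$; the remaining pieces (weight correction and the $u^2$ term) contribute only $\cO(\log(1/s))$ by the same kind of estimate as above. This yields $F'(s) = -1/(2s) + \cO(\log(1/s))$. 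The delicate bookkeeping in the last two steps, in particular getting both the constant $-2$ and the precise logarithmic error term right, is the only genuinely non-routine part of the argument.
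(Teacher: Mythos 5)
Your proof is correct, and all the constants check out: the integration by parts turns $F$ into the manifestly positive form $\int_0^{\pi/2}\sin^2\phi\cos^2\phi\,(\sin^2\phi+s/4)^{-3/2}\dd\phi$ with $F'$ manifestly negative, the large-$s$ expansion follows from $\int_0^{\pi/2}\sin^2\phi\cos^2\phi\dd\phi = \pi/16$ together with $(4/s)^{3/2}=8s^{-3/2}$ and $(4/s)^{5/2}=32s^{-5/2}$, and in the small-$s$ regime the elementary antiderivative gives $\log\bigl(4/\sqrt{s}\bigr)+\cO(s)$, the weight-correction integral gives $\log 2$, and $I_2\to 1$, so that $\log(4/\sqrt s)+\log 2 - 2 = \log(8/\sqrt s)-2$ as required; the $\cO(s\log\frac1s)$ error bounds all follow from the identity $u^{-1}-(u^2+s/4)^{-1/2} = (s/4)\,u^{-1}(u^2+s/4)^{-1/2}(u+\sqrt{u^2+s/4})^{-1}$ as you indicate. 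Note that the paper does not prove this lemma at all: it refers the reader to Section~19 of the lecture notes \cite{Sv}, so there is no in-paper argument to compare against. Your write-up is therefore a welcome self-contained substitute; the only points worth making fully explicit in a final version are the domination argument justifying differentiation under the integral sign (routine, since $s$ stays in a compact subset of $(0,\infty)$) and the uniform integrability near $u=1$ where the factor $(1-u^2)^{-1/2}$ is singular but integrable while the remaining factors are bounded.
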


\begin{rem}\label{Fbounds}
It follows in particular from Lemma~\ref{Fproperties} that
the maps $s \mapsto s^\alpha F(s)$ and $s \mapsto s^\beta F'(s)$ are
bounded if $0 < \alpha \le 3/2$ and $1 \le \beta \le 5/2$. These
observations will be constantly used in the subsequent proofs.
\end{rem}

Combining \eqref{upsi} and \eqref{BSpsi}, we obtain explicit formulas
for the axisymmetric Biot-Savart law\:
\begin{equation}\label{BSu}
  u_r(r,z) \,=\, \int_\Omega G_r(r,z,\bar r,\bar z)\omega_\theta(\bar r,
  \bar z)\dd\bar r
  \dd\bar z~, \qquad
  u_z(r,z) \,=\, \int_\Omega G_z(r,z,\bar r,\bar z)\omega_\theta(\bar r,
  \bar z)\dd\bar r\dd\bar z~,
\end{equation}
where
\begin{align}\label{Grdef}
  G_r(r,z,\bar r,\bar z) \,&=\, -\frac{1}{\pi}\frac{z-\bar z}{r^{3/2}
  \,\bar r^{1/2}}\,F'(\xi^2)~, \qquad \xi^2 \,=\, \frac{(r-\bar r)^2 +
  (z-\bar z)^2}{r\bar r}~, \\[1mm] \label{Gzdef}
  G_z(r,z,\bar r,\bar z) \,&=\, \frac{1}{\pi}\frac{r-\bar r}{r^{3/2}
  \,\bar r^{1/2}} \,F'(\xi^2) + \frac{1}{4\pi}\frac{\bar r^{1/2}}{r^{3/2}}
  \Bigl(F(\xi^2) - 2\xi^2 F'(\xi^2)\Bigr)~.
\end{align}

Our first result gives elementary estimates for the axisymmetric
Biot-Savart law in usual Lebesgue spaces. We emphasize the striking
similarity with the corresponding bounds for the two-dimensional
Biot-Savart law in the plane $\R^2$, see e.g.  \cite[Lemma~2.1]{GW1}.

\begin{prop}\label{BSprop}
The following properties hold for the velocity field $u$ defined
from the vorticity $\omega_\theta$ via the axisymmetric Biot-Savart
law \eqref{BSu}. \\[1mm]
i) Assume that $1 < p < 2 < q < \infty$ and $\frac1q = \frac1p
- \frac12$. If $\omega_\theta \in L^p(\Omega)$, then $u \in
L^q(\Omega)^2$ and
\begin{equation}\label{BSest1}
  \|u\|_{L^q(\Omega)} \,\le\, C \|\omega_\theta\|_{L^p(\Omega)}~.
\end{equation}
ii) If $1 \le p < 2 < q \le \infty$ and $\omega_\theta \in L^p(\Omega)
\cap L^q(\Omega)$, then $u \in L^\infty(\Omega)^2$ and
\begin{equation}\label{BSest2}
  \|u\|_{L^\infty(\Omega)} \,\le\, C \|\omega_\theta\|_{L^p(\Omega)}^\sigma
  \,\|\omega_\theta\|_{L^q(\Omega)}^{1-\sigma}~,  \qquad \hbox{where}\quad
  \sigma \,=\, \frac{p}{2}\,\frac{q-2}{q-p} \,\in\, (0,1)~.
\end{equation}
\end{prop}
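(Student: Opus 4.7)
The plan is to reduce both estimates to their two-dimensional counterparts via the pointwise kernel bound
\[
  |G_r(r,z,\bar r,\bar z)| + |G_z(r,z,\bar r,\bar z)| \,\le\, \frac{C}{\rho}~, \qquad
  \rho \,:=\, \sqrt{(r-\bar r)^2 + (z-\bar z)^2}~,
\]
which matches the familiar $|x-y|^{-1}$ bound enjoyed by the planar Biot--Savart kernel. To establish this uniformly from the explicit representations \eqref{Grdef} and \eqref{Gzdef}, I would split the analysis according to the value of $\xi^2 = \rho^2/(r\bar r)$. In the short-range regime $\xi \le 1$, the inequality $(r-\bar r)^2 \le r\bar r$ forces $r \sim \bar r \sim \sqrt{r\bar r}$, and Lemma~\ref{Fproperties}(i) combined with $\xi|\log\xi| \le C$ on $(0,1]$ yields the desired bound $\lesssim 1/\rho$ for each summand. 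The key observation in the long-range regime $\xi \ge 1$ is that the conditions $\rho^2 \ge r\bar r$ and $|r-\bar r| \le \rho$ together force $r \le 2\rho$ and $\bar r \le 2\rho$: indeed, if $\bar r > 2\rho$ then $r \le \rho^2/\bar r < \rho/2$, whence $|r-\bar r| > 3\rho/2 > \rho$, a contradiction. Using Remark~\ref{Fbounds} (for instance with $\beta = 3/2$ to get $|F'(\xi^2)| \lesssim 1/\xi^3$) and the asymptotic $|F(\xi^2) - 2\xi^2 F'(\xi^2)| \lesssim 1/\xi^3$ from Lemma~\ref{Fproperties}(ii), each term of $G_r$ and $G_z$ is controlled by a multiple of $\bar r^2/\rho^3$ or smaller, which collapses to $\lesssim 1/\rho$ once one invokes $\bar r \le 2\rho$.

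With the pointwise bound in hand, part (i) is an immediate consequence of the two-dimensional Hardy--Littlewood--Sobolev inequality applied to $|u(r,z)| \le C \int_\Omega |\omega_\theta(\bar r,\bar z)|\,\rho^{-1}\dd\bar r\dd\bar z$: this yields $\|u\|_{L^q(\Omega)} \le C\|\omega_\theta\|_{L^p(\Omega)}$ whenever $1 < p < 2 < q < \infty$ and $1/q = 1/p - 1/2$, exactly as in the two-dimensional Biot--Savart setting.

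For part (ii), the same pointwise bound combined with Hölder's inequality after splitting the $(\bar r,\bar z)$-integration at a radius $R > 0$ produces the schematic estimate
\[
  |u(r,z)| \,\le\, C R^{2/p'-1}\,\|\omega_\theta\|_{L^p(\Omega)} + C R^{2/q'-1}\,\|\omega_\theta\|_{L^q(\Omega)}~,
\]
where the conditions $p' > 2$ and $q' < 2$ ensure, respectively, local and global integrability of the kernel powers $\rho^{-p'}$ and $\rho^{-q'}$. Optimizing over $R$ balances the two terms and yields exactly the interpolation exponent $\sigma = p(q-2)/(2(q-p))$ of \eqref{BSest2}; a minor adjustment of the Hölder pairing handles the endpoint cases $p = 1$ and $q = \infty$. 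The main technical difficulty throughout is the long-range kernel bound, where each individual asymptotic contribution to the delicate combination $F(\xi^2) - 2\xi^2 F'(\xi^2)$ only yields $\bar r^2/\rho^3$, and one genuinely needs the geometric observation $\bar r \le 2\rho$ to recover the sharp $1/\rho$ decay and thereby reach the bounds of the two-dimensional Biot--Savart law.
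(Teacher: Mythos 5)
Your proposal is correct and follows essentially the same route as the paper: establish the pointwise bound $|G_r|+|G_z|\le C\bigl((r-\bar r)^2+(z-\bar z)^2\bigr)^{-1/2}$ from the explicit kernels, then conclude via the Hardy--Littlewood--Sobolev inequality for (i) and a split-plus-H\"older optimization for (ii), exactly as for the planar Biot--Savart law. The only (immaterial) difference is your case split $\xi\lessgtr 1$ with the geometric observation $r,\bar r\le 2\rho$ in the long-range regime, where the paper instead splits according to $\bar r\lessgtr 2r$ and uses $\bar r<2(\bar r-r)\le 2\rho$; both versions of the kernel estimate check out.
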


\begin{proof}
Both assertions follow from the basic estimate
\begin{equation}\label{Gest}
  |G_r(r,z,\bar r,\bar z)| + |G_z(r,z,\bar r,\bar z)|
  \,\le\, \frac{C}{\bigl((r-\bar r)^2 + (z-\bar z)^2\bigr)^{1/2}}~,
\end{equation}
which holds for all $(r,z) \in \Omega$ and all $(\bar r, \bar z) \in
\Omega$. At a heuristic level, estimate \eqref{Gest} follows quite
naturally from the scaling properties of $G_r(r,z,\bar r, \bar z)$ and
$G_z(r,z,\bar r,\bar z)$ if we observe that these functions behave for
$(r,z)$ close to $(\bar r,\bar z)$ like the two-dimensional velocity
field generated by a Dirac mass located at $(\bar r, \bar z)$ in
$\R^2$. To prove \eqref{Gest} rigorously, we first bound the radial
component $G_r$. We distinguish two cases\:

\noindent a) If $\bar r \le 2r$, we use the fact that $\xi^2 F'(\xi^2)$
is bounded, and obtain the estimate
\begin{align*}
  |G_r(r,z,\bar r,\bar z)| \,&\le\, C\,\frac{|z-\bar z|}{r^{3/2}
  \,\bar r^{1/2}}\,\frac{r\bar r}{(r-\bar r)^2 + (z-\bar z)^2} \\
  \,&=\, C\,\frac{\bar r^{1/2}}{r^{1/2}}\,\frac{|z-\bar z|}{(r-\bar r)^2
  + (z-\bar z)^2} \,\le\, \frac{C}{\bigl((r-\bar r)^2 + (z-\bar z)^2
  \bigr)^{1/2}}~,
\end{align*}
because $\bar r/r \le 2$ and $|z-\bar z| \le \bigl((r-\bar r)^2 +
(z-\bar z)^2\bigr)^{1/2}$.

\noindent b) If $\bar r > 2r$, observing that $\xi^3 F'(\xi^2)$
is bounded, we deduce
\begin{align*}
  |G_r(r,z,\bar r,\bar z)| \,&\le\, C\,\frac{|z-\bar z|}{r^{3/2}
  \,\bar r^{1/2}}\,\frac{r^{3/2}\,\bar r^{3/2}}{\bigl((r-\bar r)^2 +
  (z-\bar z)^2\bigr)^{3/2}} \\
  \,&=\, C\,\frac{\bar r\,|z-\bar z|}{\bigl((r-\bar r)^2
  + (z-\bar z)^2\bigr)^{3/2}} \,\le\, \frac{C}{\bigl((r-\bar r)^2
  + (z-\bar z)^2\bigr)^{1/2}}~,
\end{align*}
because $\bar r < 2 (\bar r -r ) \le 2 \bigl((r-\bar r)^2 + (z-
\bar z)^2\bigr)^{1/2}$. This proves estimate \eqref{Gest} for
$G_r$.

\smallskip Similar calculations give the same bound for the second
component $G_z$ too. Indeed, the first term in the right-hand side of
\eqref{Gzdef} can be estimated exactly as above, using the obvious
fact that $|r-\bar r| \le \bigl((r-\bar r)^2 + (z-\bar z)^2\bigr)^{1/2}$.
The second term involves the quantity $F(\xi^2) -2\xi^2 F'(\xi^2)$,
which is bounded by $C\xi^{-1}$ in case a) and by $C\xi^{-3}$ in
case b). This concludes the proof of \eqref{Gest}.

Now, since the integral kernel $G = (G_r,G_z)$ in \eqref{BSu}
satisfies the same bound as the two-dimensional Biot-Savart kernel in
$\R^2$, properties i) and ii) in Proposition~\ref{BSprop} can be
established exactly as in the 2D case. Estimate \eqref{BSest1} thus
follows from the Hardy-Littlewood-Sobolev inequality, and the bound
\eqref{BSest2} can be proved by splitting the integration domain
and applying H\"older's inequality, see e.g. \cite[Lemma~2.1]{GW1}.
\end{proof}

The proof of Proposition~\ref{BSprop} shows that the axisymmetric
Biot-Savart law \eqref{BSu} has the same properties as the
usual Biot-Savart law in the whole plane $\R^2$. In fact, it
is possible to obtain in the axisymmetric situation weighted
inequalities (involving powers of the distance $r$ to the
vertical axis) which have no analogue in the 2D case. As an
example, we state here an interesting extension of estimate
\eqref{BSest1}.

\begin{prop}\label{BSprop2}
Let $\alpha,\beta \in [0,2]$ be such that $0 \le \beta-\alpha < 1$,
and assume that $p,q \in (1,\infty)$ satisfy
\[
  \frac{1}{q} \,=\, \frac{1}{p} - \frac{1+\alpha-\beta}{2}~.
\]
If $r^\beta \omega_\theta \in L^p(\Omega)$, then $r^\alpha u \in
L^q(\Omega)^2$ and we have the bound
\begin{equation}\label{BSest3}
  \|r^\alpha u\|_{L^q(\Omega)} \,\le\, C \|r^\beta \omega_\theta\|_{L^p(\Omega)}~.
\end{equation}
\end{prop}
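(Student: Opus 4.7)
The plan is to upgrade the pointwise kernel estimate \eqref{Gest} to the weighted version
\[
r^\alpha \bar r^{-\beta}\bigl(|G_r(r,z,\bar r,\bar z)| + |G_z(r,z,\bar r,\bar z)|\bigr) \,\le\, \frac{C}{\bigl((r-\bar r)^2 + (z-\bar z)^2\bigr)^{(1+\beta-\alpha)/2}}~,
\]
and then to conclude by the classical Hardy--Littlewood--Sobolev inequality in $\R^2$, viewing $(r,z)$ as a point of the plane. Once this pointwise bound is available, \eqref{BSu} yields
\[
r^\alpha |u(r,z)| \,\le\, C\int_\Omega \frac{\bar r^\beta\,|\omega_\theta(\bar r,\bar z)|}{\bigl((r-\bar r)^2 + (z-\bar z)^2\bigr)^{(1+\beta-\alpha)/2}} \dd\bar r\dd\bar z~,
\]
which is a Riesz convolution in $\R^2$ with exponent $\gamma = 1+\beta-\alpha \in [1,2)$, so HLS produces \eqref{BSest3} with precisely the exponent relation $\frac{1}{q} = \frac{1}{p} - \frac{2-\gamma}{2} = \frac{1}{p} - \frac{1+\alpha-\beta}{2}$.

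The heart of the argument is the weighted kernel bound, which I would prove by refining the two-case split in the proof of Proposition~\ref{BSprop}. In case (b), when $\bar r > 2r$, one has $r \le \bar r/2$, $\bar r \le 2d$, and $\xi^2 \ge 1/2$, so the large-$\xi$ asymptotics of Lemma~\ref{Fproperties} (via Remark~\ref{Fbounds}) apply uniformly and give $|G| \le C\bar r/d^2$; the resulting weighted kernel $r^\alpha \bar r^{-\beta}|G| \le C\bar r^{\alpha+1-\beta}/d^2$ is then dominated by $Cd^{\alpha-\beta-1}$ because $\alpha + 1 - \beta > 0$ (which is exactly the assumption $\beta - \alpha < 1$) and $\bar r \le 2d$. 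In case (a), when $\bar r \le 2r$, I would subdivide according to whether $\xi^2 \le 1$ or $\xi^2 > 1$: in the first subcase the constraints $d^2 \le r\bar r$ and $\bar r \le 2r$ together force $\bar r \ge \tfrac{3-\sqrt 5}{2}\,r$, so that $r$, $\bar r$, $d$ are all comparable and the elementary bound $|G| \le C\sqrt{\bar r/r}/d$ combined with $r \gtrsim d$ and $\alpha - \beta \le 0$ suffices; in the second subcase one uses $|F'(\xi^2)| \lesssim \xi^{-5}$ and $|F(\xi^2)-2\xi^2 F'(\xi^2)| \lesssim \xi^{-3}$ (valid for $\xi \ge 1$) to obtain $|G| \lesssim r\bar r^2/d^4 + \bar r^2/d^3$, and since $\bar r \le 2r$, $\bar r < \sqrt 2\,d$, and $r \lesssim d$ in this regime, both terms are absorbed into $Cd^{\alpha-\beta-1}$ provided $\beta \le 2$.

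The main obstacle is case (a) with $\xi^2 > 1$, where $\bar r$ may be arbitrarily small compared with $r$: here the singular factor $\bar r^{-\beta}$ must be compensated by the vanishing of $G$ near $\bar r = 0$, which is only of order $\bar r^2$ (a direct consequence of the $s^{-5/2}$ decay of $F'(s)$ at infinity recorded in Lemma~\ref{Fproperties}\,ii). This is precisely why the statement imposes $\beta \le 2$. The condition $\beta - \alpha < 1$, on the other hand, serves two roles: in case (b) it is what allows the weight $\bar r^{-\beta}$ to be absorbed against $\bar r \le 2d$, and in the final HLS step it guarantees that the Riesz exponent $\gamma$ stays strictly below the ambient dimension $2$ of the plane.
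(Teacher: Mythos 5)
Your proposal follows exactly the paper's strategy: establish the weighted pointwise kernel bound $r^\alpha\bar r^{-\beta}|G|\le C d^{-(1+\beta-\alpha)}$ with $d^2=(r-\bar r)^2+(z-\bar z)^2$ and conclude by the Hardy--Littlewood--Sobolev inequality in $\R^2$, which is precisely the argument of Proposition~\ref{BSprop2} (the paper leaves the kernel estimate to the reader, with a cosmetically different three-region split in place of your two cases plus the $\xi^2\lessgtr 1$ subdivision). Your case analysis is correct, including the identification of where $\beta\le 2$ and $\beta-\alpha<1$ enter, so nothing further is needed.
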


\begin{proof}
As in the proof of Proposition~\ref{BSprop}, all we need is to
establish the pointwise estimate
\begin{equation}\label{Gest2}
 \frac{r^\alpha}{\bar r^\beta} \Bigl(|G_r(r,z,\bar r,\bar z)| +
 |G_z(r,z,\bar r,\bar z)|\Bigr) \,\le\, \frac{C}{\bigl(
 (r-\bar r)^2 + (z-\bar z)^2\bigr)^\lambda}~,
\end{equation}
for some $\lambda \in (0,1)$. Indeed, once \eqref{Gest2} is known, 
the bound \eqref{BSest3} follows immediately from the
Hardy-Littlewood-Sobolev inequality if $q^{-1} = p^{-1} + \lambda
-1$. To prove \eqref{Gest2}, we proceed as before and distinguish
three regions\: a) $\bar r/2 \le r \le 2 \bar r\,$; b) $r >
2\bar r\,$; c) $\bar r > 2r\,$. In each region, we bound the
quantities $F'(\xi^2)$ and $F(\xi^2) -2\xi^2 F'(\xi^2)$ appropriately
using Remark~\ref{Fbounds}. These calculations shows that inequality
\eqref{Gest2} holds with $\lambda = (1+\beta-\alpha)/2$ if and only if
the exponents $\alpha,\beta$ satisfy $0 \le \alpha \le \beta \le
2$. Since we need $\lambda < 1$ we assume in addition that
$\beta-\alpha < 1$. The details are straightforward and can be left to
the reader.
\end{proof}

\begin{rems}~\\
{\bf 1.} Similarly, one can establish a weighted analogue
of inequality \eqref{BSest2}. \\[1mm]
{\bf 2.} If $u$ is replaced by $u_r$, inequality \eqref{BSest3}
holds for all $\alpha,\beta \in [-1,2]$ such that $0 \le
\beta-\alpha < 1$. \\[1mm]
{\bf 3.} If we take $\alpha = 1/q$ and $\beta = 1/p$, so that
$\frac1q = \frac1p - \frac13$, inequality \eqref{BSest3} is
equivalent to
\[
  \|u\|_{L^q(\R^3)} \,\le\, C \|\omega\|_{L^p(\R^3)}~,
\]
which is well known for the three-dimensional Biot-Savart law,
see e.g. \cite[Lemma~2.1]{GW3}.
\end{rems}

Finally, we prove other weighted inequalities, which are
similar to those considered in \cite{FS}.

\begin{prop}\label{BSprop3}
The following estimates hold\:
\begin{align}\label{BSest4}
  \|u\|_{L^\infty(\Omega)} \,&\le\, C \|r \omega_\theta\|_{L^1(\Omega)}^{1/2}
  \,\|\omega_\theta/r\|_{L^\infty(\Omega)}^{1/2}~, \\ \label{BSest5}
  \Bigl\|\frac{u_r}{r}\Bigr\|_{L^\infty(\Omega)} \,&\le\, C
  \|\omega_\theta\|_{L^1(\Omega)}^{1/3}
  \,\|\omega_\theta/r\|_{L^\infty(\Omega)}^{2/3}~.
\end{align}
\end{prop}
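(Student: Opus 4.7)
The plan is to establish both estimates by combining the explicit integral representations of $u$ and $u_r/r$ via the axisymmetric Biot-Savart kernels with refined pointwise bounds in the three geometric regimes $\bar r \le r/2$, $r/2 \le \bar r \le 2r$, and $\bar r \ge 2r$ (as in the proof of Proposition~\ref{BSprop}), and then closing via a domain-splitting argument with a parameter $a > 0$ chosen to balance two competing contributions.

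For \eqref{BSest4}: starting from $|u(r,z)| \le \int_\Omega(|G_r| + |G_z|)|\omega_\theta|\,d\bar r d\bar z$, I would split $\Omega$ as $\{R \le a\} \cup \{R > a\}$, where $R = \sqrt{(r-\bar r)^2 + (z-\bar z)^2}$. On the inner region, the pointwise bound $|\omega_\theta| \le \bar r\|\omega_\theta/r\|_{L^\infty}$ together with $|G| \le C/R$ from \eqref{Gest} gives a naive contribution of order $(a^2 + ra)\|\omega_\theta/r\|_{L^\infty}$; the spurious factor $r$ that appears when $r \gg a$ can be removed by a Lorentz-type interpolation based on $1/R \in L^{2,\infty}$ and the fact that, since $\bar r \ge r/2$ throughout the inner region, the local $L^1$-norm of $\omega_\theta$ is bounded by $(2/r)\|r\omega_\theta\|_{L^1}$. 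On the outer region, factoring $|\omega_\theta| = \bar r|\omega_\theta|/\bar r$ and invoking the weighted kernel bound $|G|/\bar r \le C/R^2$, which follows in the far field from the asymptotic $F'(\xi^2) = O(\xi^{-5})$ of Lemma~\ref{Fproperties}(ii), gives a contribution of order $a^{-2}\|r\omega_\theta\|_{L^1}$. Balancing at $a \sim (\|r\omega_\theta\|_{L^1}/\|\omega_\theta/r\|_{L^\infty})^{1/4}$ yields \eqref{BSest4}.

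For \eqref{BSest5}: apply the analogous strategy to $u_r/r = \int_\Omega(G_r/r)\omega_\theta\,d\bar r d\bar z$, exploiting the fact that the kernel $G_r/r = -(z-\bar z)F'(\xi^2)/(\pi r^{5/2}\bar r^{1/2})$ is one order less singular than $G_r$. A case analysis in the three geometric regions, using the decay $F'(\xi^2) = O(\xi^{-5})$ at infinity and the cancellation provided by the antisymmetric factor $(z-\bar z)$, yields refined pointwise bounds: in the near-diagonal regime $\bar r \sim r$ one has $|G_r/r| \le C\bar r^{1/2}/(r^{3/2}R)$, and in the far-field regime one has $|G_r/r| \le C/R^2$. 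The first bound, combined with $|\omega_\theta| \le \bar r\|\omega_\theta/r\|_{L^\infty}$ and $\bar r \le 2r$, makes the inner contribution uniform in $r$ and of order $a\|\omega_\theta/r\|_{L^\infty}$; the second gives an outer contribution of order $a^{-2}\|\omega_\theta\|_{L^1}$, this time with the unweighted $L^1$-norm, since the improved decay of $G_r/r$ compensates for dropping the $r$-weight. Optimizing at $a \sim (\|\omega_\theta\|_{L^1}/\|\omega_\theta/r\|_{L^\infty})^{1/3}$ produces \eqref{BSest5}.

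The main obstacle will be the careful verification of the refined pointwise kernel bounds in each of the three geometric regions — especially the analysis of $G_r/r$ near the axis, where the interplay between the factor $r^{-5/2}$ and the decay of $F'(\xi^2)$ for large $\xi$ must be tracked precisely — and, for \eqref{BSest4}, the Lorentz-type interpolation needed to eliminate the $r$-dependence of the inner contribution when $r \gg a$. The subsequent balancing in the parameter $a$ is then routine.
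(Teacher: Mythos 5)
Your plan is essentially correct, but it is organized quite differently from the paper's proof, so a comparison is in order. For \eqref{BSest5} the paper first uses translation and scaling invariance to reduce to the single point $(r,z)=(1,0)$, then splits $\Omega$ into the box $I_1=\{1/2\le r\le 2,\ |z|\le 1\}$ and its complement: on $I_1$ the kernel behaves like $1/R$ and the standard H\"older splitting gives $\|\omega_\theta\|_{L^1}^{1/2}\|\omega_\theta\|_{L^\infty}^{1/2}$, which is converted using $r\approx 1$; on $I_2$ the bound $|F'(s)|\le Cs^{-5/2}$ yields a kernel $\le C/(r^2+z^2)$, and a second splitting at radius $R$ \emph{centered at the origin} gives $R\|\omega_\theta/r\|_{L^\infty}+R^{-2}\|\omega_\theta\|_{L^1}$, which is then optimized. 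You instead work at a general point, center a single splitting at $(r,z)$, and track the $r$-dependence through pointwise kernel bounds; this is the same circle of ideas (the same asymptotics of $F'$ drive everything) but avoids the scaling reduction. Your two key kernel inequalities are correct and are in fact the $\lambda=1$ endpoint cases of the paper's \eqref{Gest2}: $|G|/\bar r\le C/R^2$ is \eqref{Gest2} with $(\alpha,\beta)=(0,1)$, and $|G_r|/r\le C/R^2$ is the $u_r$-variant with $(\alpha,\beta)=(-1,0)$ allowed by Remark 2.5.2. For \eqref{BSest4} your approach has the added value of being a genuine proof, whereas the paper simply asserts that the argument of [FS] yields the stronger bound; your near-field treatment (Lorentz/H\"older interpolation with $\|\omega_\theta\|_{L^1(R\le a)}\le (2/r)\|r\omega_\theta\|_{L^1}$ and $\|\omega_\theta\|_{L^\infty(R\le a)}\le Cr\|\omega_\theta/r\|_{L^\infty}$ when $a\le r/2$, and the naive bound $C(r+a)a\|\omega_\theta/r\|_{L^\infty}\le Ca^2\|\omega_\theta/r\|_{L^\infty}$ when $a>r/2$) does close correctly after balancing $a=(\|r\omega_\theta\|_{L^1}/\|\omega_\theta/r\|_{L^\infty})^{1/4}$.

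Two details you should make explicit when writing this up. First, in the inner region $\{R\le a\}$ for \eqref{BSest5}, the bound $|G_r/r|\le C\bar r^{1/2}/(r^{3/2}R)$ only yields an $r$-uniform estimate where $\bar r\le 2r$; on the portion with $\bar r>2r$ you must instead use $|F'(\xi^2)|\le C\xi^{-5}$ to get $|G_r/r|\le C\bar r^2/R^4$ and combine it with $\bar r<2(\bar r-r)\le 2R$, which again gives an integrand $\le CB/R$ and hence the contribution $CaB$; the "cancellation from the antisymmetric factor $(z-\bar z)$" that you invoke plays no role (one only uses $|z-\bar z|\le R$). Second, for \eqref{BSest4} the statement "$\bar r\ge r/2$ throughout the inner region" is only true when $a\le r/2$, so the case distinction between $r\le 2a$ and $r>2a$ must be carried out as indicated above; with that in place the argument is complete.
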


\begin{proof}
Estimate \eqref{BSest4} is stated in \cite{FS} in the slightly
weaker form
\[
  \|u\|_{L^\infty(\Omega)} \,\le\, C \|\omega_\theta\|_{L^1(\Omega)}^{1/4}
  \|r^2 \omega_\theta\|_{L^1(\Omega)}^{1/4}\,\|\omega_\theta/r\|_{L^\infty(
  \Omega)}^{1/2}~,
\]
but the proof given there actually yields the stronger bound
\eqref{BSest4}, which can be compared with \eqref{BSest2} in the case
$p = 1$, $q = \infty$. To prove \eqref{BSest5}, we follow the same
approach as in \cite{FS}. Using translation and scaling invariance, it
is sufficient to show that the quantity
\begin{equation}\label{ur1}
  \frac{u_r}{r}\Big|_{r=1,z=0} \,=\, u_r(1,0) \,=\,
  \int_\Omega \frac{z}{\pi r^{1/2}} \,F'\left(\frac{(r-1)^2 +
  z^2}{r}\right)\omega_\theta(r,z)\dd r\dd z
\end{equation}
is bounded by $C \|\omega_\theta\|_{L^1(\Omega)}^{1/3} \,\|\omega_\theta/r
\|_{L^\infty(\Omega)}^{2/3}$. We decompose $\Omega = I_1 \cup I_2$, where
\[
  I_1 \,=\, \Bigl\{(r,z) \in \Omega \,\Big|\, \frac12 \le r \le 2\,,~
  -1 \le z \le 1\Bigr\}~, \qquad I_2 \,=\, \Omega\setminus I_1~.
\]
When integrating over the first region $I_1$, we use the fact
that $|F'(s)| \le Cs^{-1}$ and $r \approx 1$. We thus obtain
\[
  |u_r^{(1)}(1,0)| \,\le\, C \int_{I_1} \frac{|z|}{(r-1)^2 +
  z^2}\,|\omega_\theta(r,z)|\dd r\dd z \,\le\,
  C \int_{I_1} \frac{|\omega_\theta(r,z)|}{\bigl((r-1)^2 + z^2\bigr)^{1/2}}
  \dd r\dd z~.
\]
As in \cite{FS}, or in part ii) or Proposition~\ref{BSprop}, we
deduce that
\begin{equation}\label{ur2}
  |u_r^{(1)}(1,0)| \,\le\, C \|\omega_\theta\|_{L^1(I_1)}^{1/2}
  \|\omega_\theta\|_{L^\infty(I_1)}^{1/2} \,\le\, C \|\omega_\theta\|_{L^1(I_1)}^{1/3}
  \|\omega_\theta/r\|_{L^\infty(I_1)}^{2/3}~.
\end{equation}
In the complementary region, we use the optimal bound $|F'(s)| \le
Cs^{-5/2}$ and we observe that, if $(r,z) \in I_2$, then $(r-1)^2 + z^2
\ge C(r^2 + z^2)$ for some $C > 0$. We thus have
\[
  |u_r^{(2)}(1,0)| \,\le\, C \int_{I_2} \frac{|z|\,r^2}{\bigl((r-1)^2 +
  z^2\bigr)^{5/2}}\,|\omega_\theta(r,z)|\dd r\dd z \,\le\,
  C \int_{I_2} \frac{|\omega_\theta(r,z)|}{r^2 + z^2}\dd r\dd z~.
\]
Fix any $R > 0$ and denote $\Omega_R = \{(r,z) \in \Omega\,|\,
\rho \le R\}$, where $\rho = (r^2+z^2)^{1/2}$. Extending the
integration domain from $I_2$ to $\Omega = \Omega_R \cup
(\Omega\setminus\Omega_R)$, we compute
\begin{align*}
  |u_r^{(2)}(1,0)| \,&\le\, C \int_{\Omega_R} \frac{|\omega_\theta(r,z)|}{
  \rho^2}\dd r\dd z + C \int_{\Omega\setminus\Omega_R} \frac{|\omega_\theta(r,z)|}{
  \rho^2}\dd r\dd z \\
  \,&\le\, C \|\omega_\theta/r\|_{L^\infty(\Omega)} \int_{\Omega_R} \frac{1}{\rho}
  \dd r\dd z + C R^{-2} \int_{\Omega\setminus\Omega_R}|\omega_\theta(r,z)|
  \dd r\dd z \\
  \,&\le\,  C R \|\omega_\theta/r\|_{L^\infty(\Omega)} + C R^{-2}
  \|\omega_\theta\|_{L^1(\Omega)}~.
\end{align*}
Optimizing over $R > 0$ gives the bound  $|u_r^{(2)}(1,0)| \le
C\|\omega_\theta\|_{L^1(\Omega)}^{1/3} \,\|\omega_\theta/r\|_{L^\infty(\Omega)}^{2/3}$,
which together with \eqref{ur2} implies \eqref{BSest5}.
\end{proof}

\begin{rem}\label{alternative}
Estimate \eqref{BSest5} can also be obtained by observing that
\begin{equation}\label{lorentz}
  \Bigl\|\frac{u_r}{r}\Bigr\|_{L^\infty(\R^3)} \,\le\, C\,
  \Bigl\|\frac{\omega_\theta}{r}\Bigr\|_{L^{3,1}(\R^3)} \,\le\, C\,
  \Bigl\|\frac{\omega_\theta}{r}\Bigr\|_{L^1(\R^3)}^{1/3}\,
  \Bigl\|\frac{\omega_\theta}{r}\Bigr\|_{L^\infty(\R^3)}^{2/3}~,
\end{equation}
where $L^{3,1}(\R^3)$ denotes the Lorentz space. The first inequality
in \eqref{lorentz} is proved in \cite[Proposition~4.1]{AHK},
and the second one follows by real interpolation.
\end{rem}

\section{The semigroup associated with the linearized
equation}\label{sec3}

This section is devoted to the study of the linearized vorticity
equation \eqref{omeq}\:
\begin{equation}\label{omlin}
  \partial_t \omega_\theta \,=\, \Bigl(\partial_r^2 + \partial_z^2
  + \frac{1}{r}\partial_r - \frac{1}{r^2}\Bigr)\omega_\theta~,
\end{equation}
which is considered in the half-plane $\Omega = \{(r,z)
\,|\, r > 0,\, z \in \R\}$, with homogeneous Dirichlet condition
at the boundary $r = 0$. Given initial data $\omega_0 \in L^1(\Omega)$,
we denote by $\omega_\theta(t) = S(t)\omega_0$ the solution of
\eqref{omlin} at time $t > 0$.

\begin{lem}\label{Sformula}
For any $t > 0$, the evolution operator $S(t)$ associated with
Eq.~\eqref{omlin} is given by the explicit formula
\begin{equation}\label{Sdef}
  (S(t)\omega_0)(r,z) \,=\, \frac{1}{4\pi t}\int_\Omega \frac{\bar
  r^{1/2}}{r^{1/2}}\, H\Bigl(\frac{t}{r\bar r}\Bigr)\exp\Bigl(-
  \frac{(r-\bar r)^2 + (z-\bar z)^2}{4t}\Bigr)\omega_0(\bar r,\bar z)
  \dd\bar r\dd\bar z~,
\end{equation}
where the function $H : (0,+\infty) \to \R$ is defined by
\begin{equation}\label{Hdef}
  H(\tau) \,=\, \frac{1}{\sqrt{\pi\tau}}\int_{-\pi/2}^{\pi/2}
  e^{-\frac{\sin^2\phi}{\tau}}\cos(2\phi)\dd \phi~, \qquad
  \tau > 0~.
\end{equation}
\end{lem}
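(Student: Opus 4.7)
The plan is to recognize \eqref{omlin} as the scalar form of the three-dimensional vector heat equation $\partial_t\omega = \Delta\omega$ restricted to axisymmetric toroidal vector fields, and then to write the solution using the 3D heat kernel and extract the $e_\theta$-component.

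First, a direct computation of the vector Laplacian in cylindrical coordinates shows that, for any smooth scalar $f(r,z)$, the toroidal axisymmetric vector field $f\,e_\theta$ on $\R^3$ satisfies
\[
 \Delta(f\,e_\theta) \,=\, \Bigl(\partial_r^2 f + \partial_z^2 f + \frac{1}{r}\partial_r f - \frac{f}{r^2}\Bigr) e_\theta.
\]
Hence \eqref{omlin} is precisely the equation satisfied by the $e_\theta$-component of a toroidal axisymmetric solution of the 3D vector heat equation. This symmetry class (invariance under rotations about the $z$-axis, together with a sign change under reflections through any plane containing the axis) is preserved by $e^{t\Delta}$, so one may evolve the Cartesian components of the source field $\omega_0(x) = \omega_0(r,z)\,e_\theta$ by the scalar heat kernel in $\R^3$ and then read off the $e_\theta$-component of the result.

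Next, I set up the convolution. Taking $\bar x = (\bar r\cos\bar\theta,\bar r\sin\bar\theta,\bar z)$ and, by axisymmetry, target point $x = (r,0,z)$, one has
\[
 |x-\bar x|^2 \,=\, (r-\bar r)^2 + (z-\bar z)^2 + 2r\bar r\,(1-\cos\bar\theta).
\]
The Cartesian representation of the source vector at $\bar x$ is $\omega_0(\bar r,\bar z)(-\sin\bar\theta,\cos\bar\theta,0)$. Convolving each component with the Gaussian heat kernel on $\R^3$, the first and third components vanish by the odd $\bar\theta$-symmetry, while the second component (which coincides with the $e_\theta$-component at $\theta=0$) reads
\[
 \omega_\theta(r,z,t) \,=\, \frac{1}{(4\pi t)^{3/2}}\int_\Omega e^{-\frac{(r-\bar r)^2+(z-\bar z)^2}{4t}}\, I\!\left(\frac{r\bar r}{t}\right) \omega_0(\bar r,\bar z)\,\bar r\,d\bar r\,d\bar z,
\]
where $I(s) = \int_{-\pi}^{\pi} e^{-s(1-\cos\bar\theta)/2}\cos\bar\theta\,d\bar\theta$.

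Finally, with $\tau = 1/s = t/(r\bar r)$, the substitution $\phi = \bar\theta/2$ together with $1-\cos(2\phi) = 2\sin^2\phi$ yields
\[
 I(s) \,=\, 4\int_0^{\pi/2} e^{-\sin^2\phi/\tau}\cos(2\phi)\,d\phi \,=\, 2\sqrt{\pi\tau}\,H(\tau),
\]
and combining the constants gives $2\sqrt{\pi\tau}/(4\pi t)^{3/2}\cdot\bar r = \bar r^{1/2}/(4\pi t\,r^{1/2})$, which is precisely the prefactor in \eqref{Sdef}. The main obstacle is entirely bookkeeping: verifying the vector-Laplacian identity in cylindrical coordinates and massaging the angular integral into the form of $H$. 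As a consistency check, the homogeneous Dirichlet condition at $r=0$ follows from $H(\tau)=O(\tau^{-3/2})$ as $\tau\to\infty$, which itself follows from the expansion $e^{-\sin^2\phi/\tau} = 1 + O(\tau^{-1})$ combined with $\int_{-\pi/2}^{\pi/2}\cos(2\phi)\,d\phi = 0$; this decay is exactly what is needed to absorb the $r^{-1/2}$ factor.
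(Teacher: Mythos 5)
Your proposal is correct and follows essentially the same route as the paper: both identify $\omega_\theta e_\theta$ as a solution of the vector heat equation in $\R^3$, apply the Gaussian heat kernel, and reduce the angular integral to the function $H$ via the substitution $\phi = \bar\theta/2$. The extra verifications you include (the cylindrical vector-Laplacian identity and the Dirichlet boundary check via the large-$\tau$ decay of $H$) are consistent with Lemma~\ref{Hproperties} and merely make explicit what the paper leaves as a direct calculation.
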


\begin{proof}
If $\omega_\theta$ is a solution of \eqref{omlin}, we observe
that the vector valued function $\omega = \omega_\theta e_\theta$
satisfies the usual heat equation $\partial_t \omega = \Delta
\omega$ in the whole space $\R^3$. For any $t > 0$, we thus have
the solution formula
\begin{equation}\label{heat3d}
  \omega(x,t) \,=\, \frac{1}{(4\pi t)^{3/2}}\int_{\R^3}
  e^{-\frac{|x-\bar x|^2}{4t}} \,\omega(\bar x,0)\dd\bar x~,
  \qquad x \in \R^3~.
\end{equation}
Denoting $x = (r\cos\theta,r\sin\theta,z)$ and $\bar x = (\bar r\cos
\bar\theta,\bar r\sin\bar\theta,\bar z)$, we can write \eqref{heat3d}
in the form
\begin{equation}\label{heat3d2}
  \omega_\theta(r,z,t)\begin{pmatrix}-\sin\theta \\ \cos\theta
  \\0\end{pmatrix} \,=\, \frac{1}{(4\pi t)^{3/2}}\int_0^\infty
  \!\!\int_\R \int_{-\pi}^\pi e^{-\frac{|x-\bar x|^2}{4t}}
  \,\omega_0(\bar r,\bar z)\begin{pmatrix}-\sin\bar\theta \\
  \cos\bar\theta \\0\end{pmatrix}\bar r\dd\bar\theta \dd\bar z\dd\bar r~,
\end{equation}
where
\[
  |x-\bar x|^2 \,=\, (r-\bar r)^2 + (z-\bar z)^2 + 4r\bar r
  \sin^2\frac{\theta-\bar\theta}{2}~.
\]
Now, if we integrate over the angle $\bar \theta$ in \eqref{heat3d2}
and use the definition \eqref{Hdef} of $H$, we see that
\eqref{heat3d2} is equivalent to $\omega_\theta(t) = S(t)\omega_0$
with $S(t)$ defined by \eqref{Sdef}.
\end{proof}

The function $H$ cannot be expressed in terms of elementary functions,
but all we need to know is the behavior of $H(\tau)$ as
$\tau \to 0$ and $\tau \to \infty$.

\begin{lem}\label{Hproperties}
The function $H : (0,\infty) \to \R$ defined by \eqref{Hdef}
is smooth and satisfies the asymptotic expansions\:
\\[2mm]
i) $\DS H(\tau) = 1 - \frac{3\tau}{4} + \cO(\tau^2)$ and
$H'(\tau) = -3/4 + \cO(\tau)$ as $\tau \to 0$; \\[2mm]
ii) $\DS H(\tau) = \frac{\pi^{1/2}}{4\tau^{3/2}} + \cO\Bigl(\frac{1}{
\tau^{5/2}}\Bigr)$ and $\DS H'(\tau) = -\frac{3\pi^{1/2}}{8\tau^{5/2}}
+ \cO\Bigl(\frac{1}{\tau^{7/2}}\Bigr)$ as $\tau \to \infty$.
\end{lem}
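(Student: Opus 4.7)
The plan is to reduce both asymptotic expansions to standard integral asymptotics after the substitution $u = \sin\phi$. Since $\cos(2\phi) = 1 - 2u^2$ and $\D\phi = \D u/\sqrt{1-u^2}$, the defining formula becomes
\[
H(\tau) \,=\, \frac{1}{\sqrt{\pi\tau}}\int_{-1}^{1} e^{-u^2/\tau}\,\frac{1-2u^2}{\sqrt{1-u^2}}\dd u~,
\]
which makes the $\tau$-dependence transparent. Smoothness of $H$ on $(0,\infty)$ is immediate by differentiation under the integral sign, the required domination being trivial on any compact subinterval.

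For part (i), I would rescale $u = \sqrt{\tau}\,v$ and split the range at $|u| = 1/2$. In the inner region the Taylor expansion
\[
\frac{1-2\tau v^2}{\sqrt{1-\tau v^2}} \,=\, 1 - \tfrac{3}{2}\tau v^2 + \cO(\tau^2 v^4)~,
\]
combined with $\int_\R e^{-v^2}\dd v = \sqrt{\pi}$ and $\int_\R v^2 e^{-v^2}\dd v = \sqrt{\pi}/2$, gives $H(\tau) = 1 - 3\tau/4 + \cO(\tau^2)$. The outer contribution is $\cO(e^{-1/(4\tau)})$ because $e^{-u^2/\tau} \le e^{-1/(4\tau)}$ there, and is absorbed into the remainder. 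For $H'(\tau)$, the cleanest route is to differentiate the integral representation to obtain
\[
H'(\tau) \,=\, -\frac{H(\tau)}{2\tau} + \frac{1}{\tau^2\sqrt{\pi\tau}}\int_{-\pi/2}^{\pi/2}\sin^2\phi\, e^{-\sin^2\phi/\tau}\cos(2\phi)\dd\phi~,
\]
and apply the same rescaling to the second integral: its expansion starts at $\tau/2$, so the singular contributions $\pm 1/(2\tau)$ cancel and the finite parts combine to $-3/4 + \cO(\tau)$. Equivalently, one may differentiate the asymptotic expansion of $H$ term by term, which is legitimate because $H$ is smooth.

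For part (ii), the quantity $\sin^2\phi/\tau$ is bounded by $1/\tau$ on the integration range, so I would Taylor expand the exponential uniformly:
\[
e^{-\sin^2\phi/\tau} \,=\, 1 - \frac{\sin^2\phi}{\tau} + \frac{\sin^4\phi}{2\tau^2} + \cO(\tau^{-3})~.
\]
Integrating against $\cos(2\phi)$, the zeroth-order term drops out because $\int_{-\pi/2}^{\pi/2}\cos(2\phi)\dd\phi = 0$; this is the key observation that shifts the leading order from $\tau^{-1/2}$ to $\tau^{-3/2}$. The coefficient is then fixed by $\int_{-\pi/2}^{\pi/2}\sin^2\phi\cos(2\phi)\dd\phi = -\pi/4$, which follows from $\sin^2\phi = (1-\cos 2\phi)/2$ and the orthogonality of trigonometric polynomials, giving $H(\tau) = \pi^{1/2}/(4\tau^{3/2}) + \cO(\tau^{-5/2})$. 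The expansion for $H'$ is handled identically, either by differentiating the series or by repeating the computation with an extra factor $\sin^2\phi$ in the integrand.

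The whole argument is elementary and no step presents a genuine obstacle; the only care required is to justify the uniform remainder estimates in the two Taylor expansions, which amounts to the trivial bound $e^{-u^2/\tau} \le e^{-c/\tau}$ for $|u|$ away from zero in the small-$\tau$ regime, and to the standard remainder of the exponential series in the large-$\tau$ regime.
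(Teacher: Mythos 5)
Your argument is correct and follows essentially the same route as the paper: expansion ii) is obtained by Taylor-expanding the exponential in \eqref{Hdef} (the vanishing of $\int_{-\pi/2}^{\pi/2}\cos(2\phi)\dd\phi$ being the key cancellation), and expansion i) via the substitution $u=\sin\phi$, $u=\sqrt{\tau}\,v$, which is exactly the paper's change of variables $x=\sin\phi/\sqrt{\tau}$. One small caveat: your parenthetical claim that one may differentiate the asymptotic expansion term by term ``because $H$ is smooth'' is not a valid general principle (smoothness on $(0,\infty)$ does not control $H'$ near the endpoints), but this is harmless since your primary derivation of the $H'$ expansions from the differentiated integral representation is correct.
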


\begin{proof}
Expansion ii) follows immediately from \eqref{Hdef} if, in the 
expression $e^{-\frac{\sin^2\phi}{\tau}}$, we replace the exponential function 
by its Taylor series at the origin. Expansion i) can be deduced 
from the formula
\[
  H(\tau) \,=\, \frac1{\sqrt{\pi}}\int_{-\frac1{\sqrt\tau}}^{\frac1{\sqrt\tau}}
  \,e^{-x^2}\,\frac{1-2\tau x^2}{\sqrt{1-\tau x^2}}\,\dd x~,
\]
which is obtained by substituting $x=\frac{\sin\phi}{\sqrt{\tau}}$ in
the integral \eqref{Hdef}.
\end{proof}

\begin{rem}\label{Hbounds}
We believe that the function $H$ is decreasing, although we 
do not have a simple proof. In what follows, we only use the 
fact that the maps $\tau \mapsto \tau^\alpha H(\tau)$ and 
$\tau \mapsto \tau^\beta H'(\tau)$ are bounded if $0 \le \alpha 
\le 3/2$ and $0 \le \beta \le 5/2$.
\end{rem}

Let $(S(t))_{t\ge0}$ be the family of linear operators defined by
\eqref{Sdef} for $t>0$ and by $S(0) = \1$ (the identity operator).
By construction, we have the semigroup property\: $S(t_1+t_2) =
S(t_1)S(t_2)$ for all $t_1, t_2 \ge 0$. Further important
properties are collected in the following proposition.

\begin{prop}\label{Sprop}
The family $(S(t))_{t\ge0}$ defined by \eqref{Sdef} is a
strongly continuous semigroup of bounded linear operators
in $L^p(\Omega)$ for any $p \in [1,\infty)$. Moreover, if
$1 \le p \le q \le \infty$, the following estimates hold\:\\
i) If $\omega_0 \in L^p(\Omega)$, then
\begin{equation}\label{Sest1}
  \|S(t)\omega_0\|_{L^q(\Omega)} \,\le\, \frac{C}{t^{\frac1p-\frac1q}}
  \,\|\omega_0\|_{L^p(\Omega)}~, \qquad t > 0~.
\end{equation}
ii) If $f = (f_r,f_z) \in L^p(\Omega)^2$, then
\begin{equation}\label{Sest2}
  \|S(t)\div_* f\|_{L^q(\Omega)} \,\le\, \frac{C}{t^{\frac12+\frac1p
  -\frac1q}}\,\|f\|_{L^p(\Omega)}~, \qquad t > 0~,
\end{equation}
where $\div_* f = \partial_r f_r + \partial_z f_z$ denotes the
two-dimensional divergence of $f$.
\end{prop}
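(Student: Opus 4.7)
The strategy is to reduce both estimates to (generalized) Young's inequality via pointwise Gaussian bounds on the kernel
\[
K_t(r,z,\bar r,\bar z) \,=\, \frac{1}{4\pi t}\,\frac{\bar r^{1/2}}{r^{1/2}}\,H\!\Bigl(\frac{t}{r\bar r}\Bigr)\exp\!\Bigl(-\frac{\rho^2}{4t}\Bigr), \qquad \rho^2 \,=\, (r-\bar r)^2+(z-\bar z)^2.
\]
The main ingredient is the bound $|K_t(r,z,\bar r,\bar z)| \le \frac{C}{t}\exp(-\rho^2/(8t))$. Writing $e^{-\rho^2/(4t)}=e^{-\rho^2/(8t)}e^{-\rho^2/(8t)}$, one keeps one factor as the advertised Gaussian and must absorb $\frac{\bar r^{1/2}}{r^{1/2}}H(t/(r\bar r))$ into the other. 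If $\bar r\le 2r$, the prefactor is bounded by Remark~\ref{Hbounds} with $\alpha=0$. If $\bar r>2r$, Remark~\ref{Hbounds} with $\alpha=1/2$ yields $\frac{\bar r^{1/2}}{r^{1/2}}H(t/(r\bar r))\le C\bar r/\sqrt{t}$, and since $(r-\bar r)^2\ge \bar r^2/4$ in this region, one has $e^{-\rho^2/(8t)}\le e^{-\bar r^2/(32t)}$; the product $\frac{\bar r}{\sqrt{t}}e^{-\bar r^2/(32t)}$ is uniformly bounded because $xe^{-x^2/32}$ is bounded on $[0,\infty)$.

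Given this pointwise bound, inequality \eqref{Sest1} follows from Young's convolution inequality. Extending $\omega_0$ by zero to $\R^2$, we have $|S(t)\omega_0(r,z)|\le (g_t\ast|\omega_0|)(r,z)$ with $g_t(r,z)=\frac{C}{t}\exp(-(r^2+z^2)/(8t))$, whence $\|S(t)\omega_0\|_{L^q(\Omega)}\le \|g_t\|_{L^s(\R^2)}\|\omega_0\|_{L^p(\Omega)}$ for $1/s=1-1/p+1/q\in [0,1]$; a direct computation gives $\|g_t\|_{L^s(\R^2)}=Ct^{-(1/p-1/q)}$. The semigroup law was already noted (consequence of Lemma~\ref{Sformula}, since $\omega=\omega_\theta e_\theta$ solves the 3D heat equation), and strong continuity on $L^p(\Omega)$ for $p<\infty$ follows by density: the uniform $L^p\to L^p$ bound (case $p=q$ above) reduces the matter to $\omega_0\in C_c^\infty(\Omega)$ with $\supp\omega_0\subset\{r\ge r_0>0\}$, on which $K_t(r,z,\cdot,\cdot)$ acts as an approximate identity (since $H(\tau)\to 1$ as $\tau\to 0$ by Lemma~\ref{Hproperties}(i), and $\bar r^{1/2}/r^{1/2}\to 1$ as $\bar r\to r$), and the dominated convergence theorem using the pointwise Gaussian bound concludes.

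For part (ii), I integrate by parts in $(\bar r,\bar z)$. The boundary term at $\bar r=0$ vanishes because $\bar r^{1/2}H(t/(r\bar r))$ behaves like $\bar r^2$ as $\bar r\to 0^+$, by the asymptotics $H(\tau)\sim \pi^{1/2}/(4\tau^{3/2})$ of Lemma~\ref{Hproperties}(ii); boundary terms at infinity vanish by Gaussian decay. Thus
\[
(S(t)\div_* f)(r,z) \,=\, -\int_\Omega \bigl((\partial_{\bar r}K_t)f_r+(\partial_{\bar z}K_t)f_z\bigr)\dd\bar r\dd\bar z,
\]
and the remaining task is the gradient analogue $|\partial_{\bar r}K_t|+|\partial_{\bar z}K_t|\le \frac{C}{t^{3/2}}\exp(-\rho^2/(16t))$. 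The $\bar z$-derivative only hits the Gaussian and produces a factor $(z-\bar z)/(2t)$, trivially absorbed via $|z-\bar z|e^{-\rho^2/(16t)}\le C\sqrt{t}\,e^{-\rho^2/(32t)}$. The $\bar r$-derivative splits into three pieces---from $\bar r^{1/2}$, from $H'$ (where one uses the bound on $\tau^\beta H'(\tau)$ from Remark~\ref{Hbounds}), and from the Gaussian---each controlled by the same two-region analysis. The principal obstacle is the singular piece from $\partial_{\bar r}(\bar r^{1/2})=1/(2\bar r^{1/2})$, since combined with the $r^{-1/2}$ in the kernel one meets the factor $1/(\bar r^{1/2}r^{1/2})$, unbounded near the corner $r,\bar r\to 0$. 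This is tamed by choosing the exponent in Remark~\ref{Hbounds} adaptively: when $r\bar r\le t$, $\alpha=1/2$ gives $H(t/(r\bar r))\le C(r\bar r/t)^{1/2}$, so $\frac{1}{\bar r^{1/2}r^{1/2}}H\le C/\sqrt{t}$; when $r\bar r>t$, $H$ is bounded and $\frac{1}{\bar r^{1/2}r^{1/2}}\le 1/\sqrt{r\bar r}\le 1/\sqrt{t}$. Once the gradient bound is established, Young's inequality applied to the rescaled kernel $\frac{C}{t^{3/2}}\exp(-\rho^2/(16t))$ produces the extra factor $t^{-1/2}$ relative to part (i), giving \eqref{Sest2}.
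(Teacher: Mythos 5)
Your proof is correct and, for the two estimates \eqref{Sest1} and \eqref{Sest2}, follows essentially the same route as the paper: a pointwise Gaussian upper bound on the kernel (your two-region analysis for $\bar r\le 2r$ versus $\bar r>2r$ is the paper's argument for \eqref{point1}, up to constants in the exponent), followed by Young's inequality; and for part (ii), integration by parts in $(\bar r,\bar z)$ followed by the analogous gradient bound. Your treatment of the singular piece $\frac{1}{(r\bar r)^{1/2}}H(t/(r\bar r))\le Ct^{-1/2}$, via $\tau^{1/2}H(\tau)$ bounded, supplies a detail the paper leaves to the reader, and is correct. The one point where you genuinely diverge is strong continuity: you reduce by density to smooth data supported away from $r=0$ and invoke an approximate-identity argument, whereas the paper works with general $\omega_0\in L^p(\Omega)$ directly, via the change of variables $\bar r=r+\sqrt{t}\rho$, $\bar z=z+\sqrt{t}\zeta$, Minkowski's integral inequality, and dominated convergence. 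Both are standard and valid; the paper's version avoids the density step (and the need to check that the kernel's total mass tends to $1$), while yours is arguably more elementary once the uniform $L^p\to L^p$ bound from part (i) is in hand.
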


\begin{proof}
We claim that
\begin{equation}\label{point1}
  \frac{1}{4\pi t}\frac{\bar r^{1/2}}{r^{1/2}}\, H\Bigl(\frac{t}{r\bar r}
  \Bigr)\exp\Bigl(-\frac{(r-\bar r)^2 + (z-\bar z)^2}{4t}\Bigr) \,\le\,
  \frac{C}{t}\,\exp\Bigl(-\frac{(r-\bar r)^2 + (z-\bar z)^2}{5t}\Bigr)~,
\end{equation}
for all $(r,z) \in \Omega$, all $(\bar r,\bar z) \in \Omega$, and
all $t > 0$. Indeed, since $H$ is bounded, estimate \eqref{point1}
is obvious when $\bar r \le 2r$. If $\bar r > 2r$, we observe that
$\tau^{1/2}H(\tau)$ is bounded, so that
\[
  \frac{\bar r^{1/2}}{r^{1/2}}\, H\Bigl(\frac{t}{r\bar r}\Bigr) \,\le\,
  C\,\frac{\bar r^{1/2}}{r^{1/2}}\,\frac{r^{1/2}\,\bar r^{1/2}}{t^{1/2}}
  \,=\, C\,\frac{\bar r}{t^{1/2}} \,\le\, C\,\frac{|r-\bar r|}{t^{1/2}}~,
\]
where in the last inequality we used the fact that $\bar r < 2(\bar
r-r) = 2|r - \bar r|$. As $xe^{-x^2/4} \le C e^{-x^2/5}$ for all $x
\ge 0$, we conclude that \eqref{point1} holds in all cases. This
provides for the integral kernel in \eqref{Sdef} a pointwise upper
bound in terms of the usual heat kernel in the whole plane $\R^2$,
with a diffusion coefficient equal to $5/4$ instead of $1$. Thus 
estimate \eqref{Sest1} follows immediately from Young's inequality, 
as in the 2D case.

To prove \eqref{Sest2}, we assume that $\omega_0 = \div_* f =
\partial_r f_r + \partial_z f_z$, and we integrate by parts
in \eqref{Sdef} to obtain the identity
\[
  (S(t)\div_* f)(r,z) \,=\, \frac{1}{4\pi t}\int_\Omega
  \frac{\bar r^{1/2}}{r^{1/2}}\,\exp\Bigl(-\frac{(r-\bar r)^2 +
  (z-\bar z)^2}{4t}\Bigr)(A_r f_r + A_z f_z)\dd\bar r\dd\bar z~,
\]
where
\[
  A_r \,=\, \frac{t}{r \bar r^2}\,H'\Bigl(\frac{t}{r \bar r}\Bigr)
  - \Bigl(\frac{1}{2\bar r} + \frac{r-\bar r}{2t}\Bigr)
  H\Bigl(\frac{t}{r \bar r}\Bigr)~, \qquad A_z \,=\, -\frac{z-\bar z}{2t}
  \,H\Bigl(\frac{t}{r \bar r}\Bigr)~.
\]
Proceeding as above and using Remark~\ref{Hbounds}, it is
straightforward to verify that
\begin{equation}\label{point2}
  \frac{1}{4\pi t} \frac{\bar r^{1/2}}{r^{1/2}}\,\exp\Bigl(
  -\frac{(r{-}\bar r)^2 + (z{-}\bar z)^2}{4t}\Bigr)\Bigl(|A_r|+
  |A_z|\Bigr)\,\le\, \frac{C}{t^{3/2}}\,\exp\Bigl(-\frac{(r{-}\bar r)^2
  + (z{-}\bar z)^2}{5t}\Bigr)~,
\end{equation}
for all $(r,z) \in \Omega$, all $(\bar r,\bar z) \in \Omega$,
and all $t > 0$. Thus estimate \eqref{Sest2} follows again from
Young's inequality, as in the 2D case.

Finally, we show that the semigroup $(S(t))_{t\ge0}$ is strongly
continuous in $L^p(\Omega)$ if $1 \le p < \infty$.  All we need to
verify is the continuity at the origin. Given $\omega_0 \in
L^p(\Omega)$, we denote by $\overline{\omega}_0 : \R^2 \to \R$ the
function obtained by extending $\omega_0$ by zero outside $\Omega$.
Using the change of variables $\bar r = r + \sqrt{t}\rho$,
$\bar z = z + \sqrt{t}\zeta$ in \eqref{Sdef}, we obtain
the identity
\[
  \Bigl(S(t)\omega_0 - \omega_0\Bigr)(r,z) \,=\,
  \frac{1}{4\pi}\int_{\R^2} e^{-\frac{\rho^2+\zeta^2}{4}}
  \,\Psi(r,z,\rho,\zeta,t) \dd\rho\dd\zeta~,
\]
for all $(r,z) \in \Omega$, where
\[
  \Psi(r,z,\rho,\zeta,t) \,=\, \Bigl(1+\frac{\sqrt{t}\rho}{r}\Bigr)^{1/2}
  H\Bigl(\frac{t}{r(r+\sqrt{t}\rho)}\Bigr)\,\overline{\omega}_0(r
  + \sqrt{t}\rho,z + \sqrt{t}\zeta) - \overline{\omega}_0(r,z)~.
\]
By Minkowski's integral inequality, we deduce that
\begin{equation}\label{mink}
  \|S(t)\omega_0 - \omega_0\|_{L^p(\Omega)} \,\le\,  \frac{1}{4\pi}
  \int_{\R^2} e^{-\frac{\rho^2+\zeta^2}{4}}\,\|\Psi(\cdot,\cdot,\rho,\zeta,t)
  \|_{L^p(\Omega)} \dd\rho\dd\zeta~.
\end{equation}
Now, the estimates we used in the proof of \eqref{point1} show that
\begin{equation}\label{point3}
  \Bigl(1+\frac{\sqrt{t}\rho}{r}\Bigr)^{1/2} H\Bigl(\frac{t}{r(r+\sqrt{t}
  \rho)}\Bigr) \,\le\, C(1+|\rho|)~,
\end{equation}
whenever $r > 0$ and $r + \sqrt{t}\rho > 0$. This immediately implies
that
\[
  \|\Psi(\cdot,\cdot,\rho,\zeta,t)\|_{L^p(\Omega)} \,\le\, C(1+|\rho|)
  \|\omega_0\|_{L^p(\Omega)}~,
\]
for all $(\rho,\zeta) \in \R^2$ and all $t > 0$. Since the left-hand
side of \eqref{point3} converges to $1$ as $t \to 0$, and since
translations act continuously in $L^p(\R^2)$ for $p < \infty$, it also
follows from estimate \eqref{point3} and Lebesgue's dominated convergence
theorem that
\[
  \|\Psi(\cdot,\cdot,\rho,\zeta,t)\|_{L^p(\Omega)} \,\longrightarrow\,
  0 \quad \hbox{as }t \to 0~,
\]
for all $(\rho,\zeta) \in \R^2$. Thus another application of
Lebesgue's theorem implies that the right-hand side of \eqref{mink}
converges to zero as $t \to 0$, which is the desired result.
\end{proof}

As in the previous section, it is possible to derive weighted
estimates for the linear semigroup \eqref{Sdef}. The proof of the
following result is very similar to that of Propositions~\ref{BSprop2}
and \ref{Sprop}, and can thus be left to the reader.

\begin{prop}\label{Sprop2}
Let $1 \le p \le q \le \infty$ and $-1 \le \alpha \le \beta \le 2$.
If $r^\beta\omega_0 \in L^p(\Omega)$, then
\begin{equation}\label{Sweight1}
  \|r^\alpha S(t)\omega_0\|_{L^q(\Omega)} \,\le\, \frac{C}{t^{\frac1p-\frac1q
  + \frac{\beta-\alpha}{2}}} \,\|r^\beta \omega_0\|_{L^p(\Omega)}~, \qquad t > 0~.
\end{equation}
Moreover, if $-1 \le \alpha \le \beta \le 1$ and $r^\beta f \in L^p(\Omega)^2$,
then
\begin{equation}\label{Sweight2}
  \|r^\alpha S(t)\div_* f\|_{L^q(\Omega)} \,\le\, \frac{C}{t^{\frac12+\frac1p
  -\frac1q+\frac{\beta-\alpha}{2}}}\,\|r^\beta f\|_{L^p(\Omega)}~, \qquad t > 0~.
\end{equation}
\end{prop}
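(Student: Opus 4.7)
The plan is to adapt the strategy of Propositions~\ref{BSprop2} and \ref{Sprop}: for each estimate I would establish a weighted pointwise bound on the relevant integral kernel by a rescaled two-dimensional Gaussian, and then apply Young's convolution inequality in the plane, exactly as in the proofs of \eqref{Sest1} and \eqref{Sest2}.

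For \eqref{Sweight1}, multiplying the integral kernel in \eqref{Sdef} by $r^\alpha/\bar r^\beta$ yields the kernel of the operator sending $r^\beta\omega_0$ to $r^\alpha S(t)\omega_0$, and the target bound is
\[
  \frac{r^\alpha}{\bar r^\beta}\,\frac{\bar r^{1/2}}{4\pi t\,r^{1/2}}\,
  H\!\left(\frac{t}{r\bar r}\right)\exp\!\left(-\frac{(r-\bar r)^2 +
  (z-\bar z)^2}{4t}\right) \,\le\, \frac{C}{t^{1+(\beta-\alpha)/2}}\,
  \exp\!\left(-\frac{(r-\bar r)^2 + (z-\bar z)^2}{5t}\right).
\]
Under the parabolic rescaling $u = r/\sqrt{t}$, $v = \bar r/\sqrt{t}$ this reduces to checking that the explicit function
\[
  g(u,v) \,=\, u^{\alpha-1/2}\,v^{1/2-\beta}\,H\!\left(\frac{1}{uv}\right)\,
  e^{-(u-v)^2/20}
\]
is uniformly bounded on $(0,\infty)^2$. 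I would split into the three regions $v/2 \le u \le 2v$, $u > 2v$, and $v > 2u$, as in the proof of Proposition~\ref{BSprop2}. On the diagonal region the decisive estimate is $H(1/(uv)) \le C(uv)^{(\beta-\alpha)/2}$, which lies in the admissible range of Remark~\ref{Hbounds} precisely because $0 \le \beta - \alpha \le 3$; combined with $u \approx v$ this yields $g(u,v) \le C(u/v)^{(\alpha+\beta)/2 - 1/2} \le C$. In the off-diagonal regions the Gaussian $e^{-(u-v)^2/20}$ absorbs any polynomial factor in $u$ or $v$, while the constraints $\alpha \ge -1$ and $\beta \le 2$ enter through the asymptotic $H(1/(uv)) \sim (\pi^{1/2}/4)(uv)^{3/2}$ as $uv \to 0$, which produces the integrable factors $u^{\alpha+1}$ and $v^{2-\beta}$ near the respective axes.

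For \eqref{Sweight2} I would first integrate by parts in \eqref{Sdef} exactly as in the proof of \eqref{Sest2}, rewriting $S(t)\div_* f$ in terms of the functions $A_r$ and $A_z$, and then establish the analogous weighted pointwise bound with an additional factor $t^{-1/2}$ on the right-hand side. The same rescaling reduces matters to the uniform boundedness on $(0,\infty)^2$ of two further explicit expressions built from $H$ and $H'$, both of which are controlled by Remark~\ref{Hbounds}. The only genuinely new point is that the term $\frac{t}{r\bar r^2}H'(t/(r\bar r))$ in $A_r$, evaluated using $H'(\tau) = O(\tau^{-5/2})$ as $\tau \to \infty$, contributes a factor behaving like $\bar r^{1-\beta}$ near $\bar r = 0$, which remains bounded precisely when $\beta \le 1$; this accounts for the stricter range of exponents in part~(ii). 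I expect the main effort to lie in the case-by-case bookkeeping and the simultaneous verification of the boundary behavior at $r = 0$ and $\bar r = 0$ for every sign pattern of $\alpha,\beta$, rather than in any new conceptual input, consistent with the author's remark that the proof can be left to the reader.
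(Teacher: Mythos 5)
Your proof is correct and follows exactly the route the paper intends for this statement: a weighted pointwise Gaussian bound on the kernel of \eqref{Sdef} (and on its integrated-by-parts variant involving $A_r$ and $A_z$), obtained by the same case analysis and the bounds of Remark~\ref{Hbounds}, followed by Young's inequality as in Propositions~\ref{BSprop2} and \ref{Sprop}. Your accounting of where each exponent constraint enters ($\alpha\ge -1$ and $\beta\le 2$ from $H(\tau)=\cO(\tau^{-3/2})$ near the axes, and $\beta\le 1$ from the extra $\bar r^{-1}$ factors in $A_r$) matches the intended argument.
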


\section{Local existence of solutions}\label{sec4}

Equipped with the results of the previous sections, we now
take up the proof of Theorems~\ref{main1} and \ref{main2}.
In view of the divergence-free condition in \eqref{diffBS},
the evolution equation \eqref{omeq} for the axisymmetric
vorticity $\omega_\theta$ can be written in the equivalent form
\begin{equation}\label{omeq2}
  \partial_t \omega_\theta + \div_*(u\,\omega_\theta)
  \,=\, \Bigl(\partial_r^2 + \partial_z^2 + \frac{1}{r}\partial_r
  - \frac{1}{r^2}\Bigr)\omega_\theta~,
\end{equation}
where $\div_*(u\,\omega_\theta) = \partial_r (u_r \omega_\theta) +
\partial_z (u_z \omega_\theta)$. Given initial data $\omega_0$,
the integral equation associated with \eqref{omeq2} is
\begin{equation}\label{omint}
  \omega_\theta(t) \,=\, S(t)\omega_0 - \int_0^t S(t-s)
  \div_*(u(s)\omega_\theta(s))\dd s~, \qquad t > 0~,
\end{equation}
where $S(t)$ denotes the linear semigroup defined in \eqref{Sdef}. In
this section, our goal is to prove local existence and uniqueness of
solutions to \eqref{omint} using a standard fixed point argument, in
the spirit of Kato \cite{Ka1}. For the sake of clarity, we first treat
the case where $\omega_0 \in L^1(\Omega)$, and then consider the more
complicated situation where $\omega_0$ is a finite measure with
sufficiently small atomic part. This will establish the local
well-posedness claims in Theorems~\ref{main1} and \ref{main2},
respectively.

\subsection{Local existence when the initial vorticity
is integrable}
\label{sec41}

\begin{prop}\label{propex1}
For any initial data $\omega_0 \in L^1(\Omega)$, there exists
$T = T(\omega_0) > 0$ such that the integral equation \eqref{omint}
has a unique solution
\begin{equation}\label{ompropT}
  \omega_\theta \in C^0([0,T],L^1(\Omega)) \cap C^0((0,T],L^\infty(\Omega))~.
\end{equation}
Moreover $\|\omega_\theta(t)\|_{L^1(\Omega)} \le \|\omega_0\|_{L^1(\Omega)}$
for all $t \in [0,T]$, and estimate \eqref{asym1} holds. Finally,
if $\|\omega_0\|_{L^1(\Omega)}$ is small enough, the local existence
time  $T > 0$ can be taken arbitrarily large.
\end{prop}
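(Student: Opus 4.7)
The plan is a Kato-style fixed point argument for the integral equation \eqref{omint}, carried out in a scale-invariant Banach space tailored to the exponents arising from Propositions~\ref{BSprop} and \ref{Sprop}. A natural choice is
\[
  Y_T \,=\, \Bigl\{\omega \in C^0((0,T], L^{4/3}(\Omega)) \,:\,
  \lim_{t \to 0} t^{1/4}\|\omega(t)\|_{L^{4/3}(\Omega)} = 0\Bigr\},
\]
with norm $\|\omega\|_{Y_T} = \sup_{0<t\le T} t^{1/4}\|\omega(t)\|_{L^{4/3}(\Omega)}$, and I denote by $\Phi$ the map sending $\omega$ to the right-hand side of \eqref{omint}. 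The exponent $p=4/3$ is dictated by the scaling: Proposition~\ref{BSprop}(i) then gives $u \in L^4(\Omega)^2$, so $u\omega \in L^1(\Omega)$ by H\"older's inequality, which is precisely what allows Proposition~\ref{Sprop}(ii) to close the iteration.

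For the linear part, Proposition~\ref{Sprop}(i) yields $\|S(t)\omega_0\|_{L^{4/3}} \le C\,t^{-1/4}\|\omega_0\|_{L^1}$, and the vanishing $t^{1/4}\|S(t)\omega_0\|_{L^{4/3}} \to 0$ follows by density, approximating $\omega_0$ in $L^1(\Omega)$ by functions of $L^1 \cap L^{4/3}$. For the bilinear term, combining Propositions~\ref{Sprop}(ii) and \ref{BSprop}(i) yields
\[
  \Bigl\|\int_0^t S(t-s)\div_*(u(s)\omega(s))\dd s \Bigr\|_{L^{4/3}}
  \,\le\, C\int_0^t (t-s)^{-3/4} \|\omega(s)\|_{L^{4/3}}^2 \dd s
  \,\le\, C'\,t^{-1/4}\|\omega\|_{Y_T}^2,
\]
using $\int_0^t (t-s)^{-3/4} s^{-1/2}\dd s = B(1/4,1/2)\,t^{-1/4}$, together with a parallel bound for differences. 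Choosing $T$ small enough that $\|S(\cdot)\omega_0\|_{Y_T}$ is sufficiently small (possible by the vanishing just established), $\Phi$ contracts a closed ball in $Y_T$ and produces a unique fixed point $\omega_\theta$. Since $\|S(\cdot)\omega_0\|_{Y_T} \le C\|\omega_0\|_{L^1}$ uniformly in $T$, smallness of $\|\omega_0\|_{L^1}$ allows $T$ to be arbitrarily large.

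The remaining regularity statements follow by bootstrap in \eqref{omint}. Iteratively plugging $\omega_\theta$ into the integral equation and applying Propositions~\ref{BSprop} and \ref{Sprop} propagates the bound $t^{1-1/q}\|\omega_\theta(t)\|_{L^q} \to 0$ from $q=4/3$ to all $q \in (1, \infty]$, and yields continuity into $L^q$ for $t>0$. Continuity at $t=0$ in $L^1(\Omega)$ follows from the strong continuity of $S(t)$ on $L^1$ (Proposition~\ref{Sprop}) and the estimate $\bigl\|\int_0^t S(t-s)\div_*(u\omega)\dd s\bigr\|_{L^1} \le C\|\omega\|_{Y_t}^2 \to 0$ as $t \to 0$, using $\int_0^t (t-s)^{-1/2}s^{-1/2}\dd s = \pi$.

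The main obstacle is the $L^1$-contraction $\|\omega_\theta(t)\|_{L^1(\Omega)} \le \|\omega_0\|_{L^1(\Omega)}$. I would first establish it for smooth, compactly supported, axisymmetric initial data, for which classical results (\cite{La,UY}) provide smooth solutions, and then transfer it to general $\omega_0 \in L^1(\Omega)$ by the uniqueness of the fixed point. For a smooth solution, multiply \eqref{omeq} by a regularized sign of $\omega_\theta$ and integrate on $\Omega$ with respect to $\dd r\dd z$: the transport term $-u\cdot\nabla\omega_\theta$ and the stretching term $(u_r/r)\omega_\theta$ cancel, thanks to the 3D divergence-free identity $\partial_r u_r + u_r/r + \partial_z u_z = 0$; the contribution $\int \mathrm{sign}(\omega_\theta)(\partial_r^2+\partial_z^2)\omega_\theta\,\dd r\dd z$ is non-positive by Kato's inequality; and the remaining piece satisfies
\[
  \int_\Omega \Bigl(\tfrac{1}{r}\partial_r|\omega_\theta| - \tfrac{|\omega_\theta|}{r^2}\Bigr)\dd r\dd z
  \,=\, \int_\Omega \partial_r\Bigl(\tfrac{|\omega_\theta|}{r}\Bigr)\dd r\dd z
  \,=\, -\int_\R \lim_{r\to 0^+}\tfrac{|\omega_\theta(r,z)|}{r}\dd z \,\le\, 0,
\]
since $|\omega_\theta|/r$ decays at $r=\infty$ and $\omega_\theta = O(r)$ at the axis for smooth axisymmetric fields.
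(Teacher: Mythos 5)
Your existence argument is essentially the paper's: the same scale-invariant space built on $t^{1/4}\|\cdot\|_{L^{4/3}(\Omega)}$, the same pairing of Proposition~\ref{Sprop}(ii) with Proposition~\ref{BSprop}(i), the same smallness mechanism ($\|S(\cdot)\omega_0\|$ small for small $T$ by density, or for all $T$ when $\|\omega_0\|_{L^1}$ is small), and the same bootstrap to reach \eqref{asym1} for all $p$ (for $p$ near and at $\infty$ you do need to split the Duhamel integral at $t/2$ and use two different H\"older pairings, as in \eqref{intsplit}--\eqref{MNexp}; a single estimate produces a non-integrable singularity). Your route to the $L^1$ bound is genuinely different from the paper's: you regularize $\mathrm{sign}(\omega_\theta)$ and use Kato's inequality plus the identity $\frac1r\partial_r|\omega_\theta|-\frac{|\omega_\theta|}{r^2}=\partial_r(|\omega_\theta|/r)$, first for smooth data and then by density and continuous dependence; the paper instead decomposes $\omega_\theta=\omega_\theta^+-\omega_\theta^-$ into two non-negative solutions of linear advection--diffusion equations driven by the same velocity field and applies the maximum principle and Hopf's lemma (Lemma~\ref{L1lem}). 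Both work; the paper's version additionally yields \emph{strict} decay of the $L^1$ norm, which is used later for the long-time asymptotics, while yours is closer to the textbook $L^1$-contraction argument. Your boundary computation at $r=0$ is fine provided you keep the two individually non-integrable terms grouped as the single derivative $\partial_r(|\omega_\theta|/r)$, as you do.

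The genuine gap is the uniqueness claim. The contraction mapping gives uniqueness only in the ball of $Y_T$, i.e.\ among solutions satisfying $t^{1/4}\|\omega_\theta(t)\|_{L^{4/3}(\Omega)}\to 0$ as $t\to 0$. The proposition (and Remark~\ref{mildrem}) asserts uniqueness in the much larger class $C^0([0,T],L^1(\Omega))\cap C^0((0,T],L^\infty(\Omega))$, where a competing solution need not satisfy that vanishing condition --- indeed $\|\omega_\theta(t)\|_{L^\infty(\Omega)}$ is only assumed continuous on $(0,T]$ and could a priori blow up arbitrarily fast as $t\to0$, so $t^{1/4}\|\omega_\theta(t)\|_{L^{4/3}(\Omega)}$ need not even be bounded. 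Closing this requires an extra argument, which the paper takes from Brezis: given any mild solution in the larger class, restart the fixed-point construction from time $t_0>0$ (where the data lie in $L^1\cap L^\infty$, so the restarted solution automatically satisfies the vanishing condition and coincides with the given one on $[t_0,t_0+\tilde T]$), and then let $t_0\to 0$ using the continuity of $\omega_\theta$ at $t=0$ in $L^1(\Omega)$ together with the continuous dependence estimate \eqref{omcont} on the compact set $\{\omega_\theta(t):t\in[0,T]\}$. Without this step (or a substitute), you have only proved uniqueness under the additional hypothesis \eqref{zerotime}, which is strictly weaker than the statement.
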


\begin{proof}
We follow the same approach as in the two-dimensional case, see
e.g. \cite{BA,Ga,Ka2}. Given $T > 0$ we introduce the function space
\begin{equation}\label{XTdef}
  X_T \,=\, \Bigl\{\omega_\theta \in C^0((0,T],L^{4/3}(\Omega)\,\Big|\,
  \|\omega_\theta\|_{X_T} < \infty\Bigr\}~,
\end{equation}
equipped with the norm
\[
  \|\omega_\theta\|_{X_T} \,=\, \sup_{0 < t \le T}t^{1/4}\|\omega_\theta(t)
  \|_{L^{4/3}(\Omega)}~.
\]
For any $t \ge 0$, we denote $\omega_\lin(t) = S(t)\omega_0$,
where $S(t)$ is the linear semigroup \eqref{Sdef}. It then
follows from Proposition~\ref{Sprop} that $\omega_\lin \in
X_T$ for any $T > 0$. For later use, we define
\begin{equation}\label{C1def}
  C_1(\omega_0,T) \,=\, \|\omega_\lin\|_{X_T} \,=\,
  \sup_{0 < t \le T} t^{1/4}\|S(t)\omega_0\|_{L^{4/3}(\Omega)}~.
\end{equation}
In view of \eqref{Sest1}, there exists a universal constant $C_2 > 0$
such that $C_1(\omega_0,T) \le C_2 \|\omega_0\|_{L^1(\Omega)}$ for any
$T > 0$. Moreover, since $L^1(\Omega) \cap L^{4/3}(\Omega)$ is dense
in $L^1(\Omega)$, it also follows from \eqref{Sest1} that
$C_1(\omega_0,T) \to 0$ as $T \to 0$, for any $\omega_0 \in
L^1(\Omega)$.

Given $\omega_\theta \in X_T$ and $p \in [1,2)$, we define a map 
$\cF \omega_\theta : (0,T] \to L^p(\Omega)$ in the following way\:
\begin{equation}\label{cFdef}
  (\cF \omega_\theta)(t) \,=\, \int_0^t S(t-s)\div_*(u(s)
  \omega_\theta(s))\dd s~, \qquad 0 < t \le T~,
\end{equation}
where it is understood that $u(s)$ is the velocity field obtained from
$\omega_\theta(s)$ via the axisymmetric Biot-Savart law \eqref{BSu}. 
Using estimate \eqref{Sest2}, H\"older's inequality, and the bound 
\eqref{BSest1}, we obtain for $t \in (0,T]$\:
\begin{align}\nonumber
  t^{1-\frac1p}\|(\cF \omega_\theta)(t)\|_{L^p(\Omega)} \,&\le\,
  t^{1-\frac1p}\int_0^t \frac{C}{(t-s)^{\frac32-\frac1p}}\,
  \|u(s)\omega_\theta(s)\|_{L^1(\Omega)}\dd s \\ \nonumber
  \,&\le\, t^{1-\frac1p}\int_0^t \, \frac{C}{(t-s)^{\frac32-\frac1p}} 
  \|u(s)\|_{L^4(\Omega)}\|\omega_\theta(s)\|_{L^{4/3}(\Omega)}\dd s \\ \label{cFest}
  \,&\le\, t^{1-\frac1p}\int_0^t \frac{C}{(t-s)^{\frac32-\frac1p}}
  \|\omega_\theta(s)\|_{L^{4/3}(\Omega)}^2\dd s \\ \nonumber
  \,&\le\, t^{1-\frac1p}\int_0^t \frac{C}{(t-s)^{\frac32-\frac1p}}\,\frac{
  \|\omega_\theta\|_{X_T}^2}{s^{\frac12}}\dd s \,\le\, C \|\omega_\theta\|_{X_T}^2~.
\end{align}
It is also straightforward to verify that the quantity $(\cF
\omega_\theta)(t)$ depends continuously on the time parameter $t \in
(0,T]$ in the topology of $L^p(\Omega)$. Choosing $p = 4/3$, we deduce
that $\cF \omega_\theta \in X_T$ and $\|\cF\omega_\theta\|_{X_T} \le
C_3 \|\omega_\theta\|_{X_T}^2$ for some $C_3 > 0$. More generally, we
have the Lipschitz estimate
\begin{equation}\label{cFLip}
  \|\cF \omega_\theta - \cF \tilde \omega_\theta\|_{X_T} \,\le\, C_3
 \bigl(\|\omega_\theta\|_{X_T} + \|\tilde \omega_\theta\|_{X_T}\bigr)
  \|\omega_\theta- \tilde\omega_\theta\|_{X_T}~,
\end{equation}
for all $\omega_\theta, \tilde\omega_\theta \in X_T$.

Now we consider the map $\cG : X_T \to X_T$ defined by $\cG
\omega_\theta = \omega_\lin - \cF \omega_\theta$. We fix $R > 0$ such
that $2 C_3 R < 1$, and denote by $B_R$ the closed ball of radius $R$
centered at the origin in $X_T$. If $C_1(\omega_0,T) \le R/2$, the
estimates above show that $\cG$ maps $B_R$ into $B_R$ and is a strict
contraction there, so that (by the Banach fixed point theorem) $\cG$
has a unique fixed point $\omega_\theta$ in $B_R$. By construction,
$\omega_\theta$ is a solution to the integral equation \eqref{omint}
in $X_T$. The condition $C_1(\omega_0,T) \le R/2$ can be fulfilled in
two different ways. If the initial data are small enough so that $C_2
\|\omega_0\|_{L^1(\Omega)} \le R/2$, the existence time $T > 0$ can be
chosen arbitrarily, and the fixed point argument therefore establishes
{\em global existence for small data in $L^1(\Omega)$}.  On the other
hand, for larger initial data $\omega_0$, we can always choose $T > 0$
small enough so that $C_1(\omega_0,T) \le R/2$, hence we also have
{\em local existence for arbitrary data}.

\begin{rems}\label{Texist}~\\
{\bf 1.} For large data, the local existence time $T > 0$ given by
the fixed point argument depends on the initial data, and it is not
possible to bound $T$ from below using the norm $\|\omega_0\|_{L^1(\Omega)}$
only. However, if $\omega_0 \in L^1(\Omega) \cap L^p(\Omega)$ for some
$p > 1$, then (by Proposition~\ref{Sprop}) an upper bound on
$\|\omega_0\|_{L^p(\Omega)}$ provides a lower bound
on the local existence time $T$.\\[1mm]
{\bf 2.}  For later use, we note that the fixed point argument also
proves that the solution $\omega_\theta$ depends continuously on the
initial data. More precisely, if $K \subset L^1(\Omega)$ is any
compact set, or any sufficiently small neighborhood of a given point,
we can take the same local existence time $T > 0$ for all initial data
$\omega_0 \in K$, and there exists $C > 0$ such that
\begin{equation}\label{omcont}
  \sup_{t \in [0,T]} \|\omega_\theta(t) - \tilde\omega_\theta(t)
  \|_{L^1(\Omega)} + \|\omega_\theta - \tilde\omega_\theta\|_{X_T} \,\le\,
  C\|\omega_0 - \tilde\omega_0\|_{L^1(\Omega)}~,
\end{equation}
for all $\omega_0, \tilde \omega_0 \in K$, where $\omega_\theta,
\tilde\omega_\theta \in X_T$ denote the solutions corresponding to
the initial data $\omega_0, \tilde \omega_0$, respectively.
\end{rems}

To conclude the proof of Proposition~\ref{propex1}, we establish
a few additional properties of the local solution $\omega_\theta \in
X_T$. We first note that
\begin{equation}\label{zerotime}
  \lim_{t \to 0} t^{1/4}\|\omega_\theta(t)\|_{L^{4/3}(\Omega)}
  \,=\, \lim_{T \to 0} \|\omega_\theta\|_{X_T} \,=\, 0~,
\end{equation}
because, when $T > 0$ is small, the fixed point argument holds in the
ball $B_R$ with $R = 2C_1(\omega_0,T)$. This proves \eqref{asym1} for
$p = 4/3$. Next, using \eqref{cFest} with $p = 1$, we see that 
that the map $t \mapsto (\cF \omega_\theta)(t)$ is continuous in the
topology of $L^1(\Omega)$ and satisfies $\|(\cF \omega_\theta)(t)
\|_{L^1(\Omega)} \le C \|\omega_\theta\|_{X_T}^2$ for all $t \in
(0,T]$.  In view of \eqref{zerotime}, this implies that the map
$\omega_\theta - \omega_\lin \equiv - \cF \omega_\theta$ belongs to
$C^0([0,T], L^1(\Omega)$ and vanishes at $t = 0$. In particular, using
Proposition~\ref{Sprop}, we conclude that $\omega_\theta \in
C^0([0,T],L^1(\Omega))$. That $\|\omega_\theta(t)\|_{L^1(\Omega)}$ is
a non-increasing function of time is a well known fact, which will be
discussed in Lemma~\ref{L1lem} below. Finally, to prove that
$\omega_\theta \in C^0((0,T],L^p(\Omega))$ for any $p \in (1,\infty]$
and that \eqref{asym1} holds, we use a standard bootstrap argument
which we explain in some detail because it will be used again in
Section~\ref{sec5}. For $p \in (1,\infty]$ we define
\begin{equation}\label{MNdef}
  M_p(T) \,=\, \sup_{0 < t \le T} t^{1-\frac1p} \|S(t)\omega_0\|_{L^p
  (\Omega)}~, \qquad \hbox{and}\quad
  N_p(T) \,=\, \sup_{0 < t \le T} t^{1-\frac1p} \|\omega_\theta(t)\|_{L^p
  (\Omega)}~.
\end{equation}
Then $M_p(T) \to 0$ as $T \to 0$, and we already know that $N_p(T) \to
0$ as $T \to 0$ for $p = 4/3$, hence for all $p \in (1,4/3]$ by
interpolation. To prove the same result for $p > 4/3$, we split the
integral in \eqref{omint} in two parts and estimate the nonlinear term
as follows\:
\[
  \|u\,\omega_\theta\|_{L^r(\Omega)} \,\le\, C \|u\|_{L^s(\Omega)}
  \|\omega_\theta\|_{L^{q_2}(\Omega)} \,\le\, C \|\omega_\theta\|_{L^{q_1}(\Omega)}
   \|\omega_\theta\|_{L^{q_2}(\Omega)}~, \qquad s \,=\, \frac{2q_1}{2-q_1}~,
\]
where $\frac43 \le q_1 < 2$, $\frac43 \le q_2 \le \infty$, and $\frac1r =
\frac{1}{q_1} + \frac{1}{q_2} - \frac12$. We thus obtain
\begin{align}\nonumber
  \|\omega_\theta(t)\|_{L^p(\Omega)} \,\le\, \|S(t)\omega_0\|_{L^p
  (\Omega)} &+ C\int_0^{t/2} \frac{\|\omega_\theta(s)\|_{L^q(\Omega)}^2}{(t-s)^{
  \frac2q -\frac1p}}\dd s \\ \label{intsplit}
  &+ C\int_{t/2}^t \!\frac{\|\omega_\theta(s)
  \|_{L^{q_1}(\Omega)} \|\omega_\theta(s)\|_{L^{q_2}(\Omega)}}{(t-s)^{\frac{1}{q_1}
  +\frac{1}{q_2} -\frac1p}}\dd s~,
\end{align}
where the exponents $p \in [1,\infty]$, $q,q_1 \in [4/3,2)$ and
$q_2 \in [4/3,\infty]$ are assumed to satisfy
\begin{equation}\label{MNexp}
  \frac12 \,\le\, \frac2q - \frac1p~, \qquad \hbox{and}\quad
  \frac12 \,\le\, \frac1{q_1} + \frac1{q_2} - \frac1p \,<\, 1~.
\end{equation}
Multiplying both sides of \eqref{intsplit} by $t^{1-\frac1p}$ and
taking the supremum over $t \in (0,T]$ we obtain the useful bound
\begin{equation}\label{MNrec}
  N_p(T) \,\le\, M_p(T) + C_{p,q}N_q(T)^2 + C_{p,q_1,q_2}N_{q_1}(T)
  N_{q_2}(T)~,
\end{equation}
where $C_{p,q}$ and $C_{p,q_1,q_2}$ are positive constants. If
we choose $q = q_1 = q_2 = 4/3$, we deduce from \eqref{MNrec}
that $N_p(T) \to 0$ as $T \to 0$ for any $p < 2$. Then,
taking $q = 4/3$ and $q_1 = q_2$ sufficiently close to $2$,
we obtain the same result for any $p < \infty$. Finally,
choosing $q = 4/3$, $q_1 = 3/2$, and $q_2 = 4$, we conclude
from \eqref{MNrec} that $N_\infty(T) \to 0$ as $T \to 0$,
which proves \eqref{asym1}.

Our final task is to discuss the uniqueness of the solution to
\eqref{omint}. We first observe that, given $\omega_0 \in
L^1(\Omega)$, the solution $\omega_\theta \in X_T$ given by the fixed
point argument is, by construction, the only solution of \eqref{omint}
in $X_T$ satisfying \eqref{zerotime}. In fact, using a nice argument
due to Brezis \cite{Br}, it is possible to prove uniqueness in a much
larger class. Indeed, given any $T > 0$, assume that $\omega_\theta
\in C^0([0,T],L^1(\Omega)) \cap C^0((0,T],L^\infty (\Omega))$ is a
mild solution of \eqref{omeq2} on $(0,T]$ in the following sense\:
\begin{equation}\label{omint2}
  \omega_\theta(t) \,=\, S(t-t_0)\omega_\theta(t_0) - \int_{t_0}^t S(t-s)
  \div_*(u(s)\omega_\theta(s))\dd s~, \qquad 0 < t_0 \le t \le T~.
\end{equation}
The set $K =\{\omega_\theta(t)\,|\, t \in [0,T]\}$ is compact in
$L^1(\Omega)$, hence the fixed point argument allows us to construct a
local solution in $X_{\tilde T}$ for all initial data $\tilde \omega_0
\in K$, with a common existence time $\tilde T > 0$ (without loss of
generality, we assume henceforth that $\tilde T \le T/2$).  That
solution is denoted by $\tilde \omega_\theta(t) = \Sigma(t) \tilde
\omega_0$ for $t \in [0,\tilde T]$. By the observation above, for all
$t_0 \in (0,\tilde T]$ we have the relation
\begin{equation}\label{omrel}
  \omega_\theta(t) \,=\, \Sigma(t-t_0)\omega_\theta(t_0)~,
  \qquad t \in [t_0,t_0+\tilde T]~,
\end{equation}
because the left-hand side is a solution of \eqref{omint2} on 
the time interval $[t_0,t_0+\tilde T]$ and we obviously have
$(t-t_0)^{1/4}\|\omega_\theta(t)\|_{L^{4/3}(\Omega)} \to 0$ as $t \to t_0$, 
which is the analogue of condition \eqref{zerotime}. Now, for
any fixed $t \in (0,\tilde T]$, it follows from \eqref{omcont} that
\[
  \|\Sigma(t-t_0)\omega_\theta(t_0) - \Sigma(t-t_0)\omega_\theta(0)\|_{L^1(\Omega)}
  \,\le\, C \|\omega_\theta(t_0) - \omega_\theta(0)\|_{L^1(\Omega)}
  \,\longrightarrow\, 0~,
\]
as $t_0 \to 0$, and it is also clear that $\|\Sigma(t-t_0)\omega_\theta(0)-
\Sigma(t)\omega_\theta(0)\|_{L^1(\Omega)} \to 0$ as $t_0 \to 0$. Thus taking
the limit $t_0 \to 0$ in \eqref{omrel} we obtain the relation
$\omega_\theta(t) = \Sigma(t)\omega_\theta(0)$ for $t \in [0,\tilde T]$, which
means that the solution $\omega_\theta$ we started with coincides 
on the time interval $[0,\tilde T]$ with the solution constructed 
from the initial data $\omega_\theta(0)$ by the fixed point argument.
\end{proof}

\subsection{The case where the initial vorticity is a finite
measure}\label{sec42}

We next consider the more general case where the initial vorticity
$\omega_0$ in \eqref{omint} is a finite measure on $\Omega$, which is
no longer absolutely continuous with respect to the Lebesgue measure
$\dd r\dd z$. For convenience we denote $\mu = \omega_0$, and we
recall the canonical decomposition $\mu = \mu_{ac} + \mu_{sc} +
\mu_{pp}$ where $\mu_{ac}$ is absolutely continuous with respect to
Lebesgue's measure, $\mu_{pp}$ is a (countable) collection of point
masses, and $\mu_{sc}$ is the ``singularly continuous'' part which has
no atoms yet is supported on a set of zero Lebesgue measure. We have
$\mu_{ac} \perp \mu_{sc} \perp \mu_{pp}$, which means that the three
measures are mutually singular. In particular, the total variation
norm of $\mu$ satisfies
\[
  \|\mu\|_\tv \,=\, \|\mu_{ac}\|_\tv +  \|\mu_{sc}\|_\tv +  \|\mu_{pp}\|_\tv~.
\]
The linear semigroup $S(t)$ acts on the measure $\mu$ by the
formula
\begin{equation}\label{Sdef2}
  (S(t)\mu)(r,z) \,=\, \frac{1}{4\pi t}\int_\Omega \frac{\bar
  r^{1/2}}{r^{1/2}}\, H\Bigl(\frac{t}{r\bar r}\Bigr)\exp\Bigl(-
  \frac{(r-\bar r)^2 + (z-\bar z)^2}{4t}\Bigr)\dd\mu(\bar r,\bar z)~,
\end{equation}
which generalizes \eqref{Sdef}, and we have the following estimates\:

\begin{prop}\label{Sprop3}
Let $\mu$ be a finite measure on $\Omega$. Then
\begin{equation}\label{Sest3}
  \sup_{t > 0} t^{1-\frac1p}\,\|S(t)\mu\|_{L^p(\Omega)} \,\le\,
  C \|\mu\|_\tv~, \qquad 1 \le p \le \infty~,
\end{equation}
and
\begin{equation}\label{Sest4}
   L_p(\mu) \,:=\, \limsup_{t \to 0}\,t^{1-\frac1p}\|S(t)\mu\|_{L^p(\Omega)}
   \,\le\, C \|\mu_{pp}\|_\tv~, \qquad 1 < p \le \infty~.
\end{equation}
\end{prop}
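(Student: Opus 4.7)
The bound \eqref{Sest3} will follow directly from the pointwise Gaussian estimate \eqref{point1} established in the proof of Proposition~\ref{Sprop}, which dominates the kernel $K_t$ of $S(t)$ appearing in \eqref{Sdef2} by a rescaled two-dimensional heat kernel. Applying Minkowski's integral inequality to \eqref{Sdef2} gives $\|S(t)\mu\|_{L^p(\Omega)} \le \int_\Omega \|K_t(\cdot\,;\bar r,\bar z)\|_{L^p(\Omega)}\,d|\mu|(\bar r,\bar z)$, and an elementary Gaussian computation yields $\|K_t(\cdot\,;\bar r,\bar z)\|_{L^p(\Omega)}\le C\,t^{-(1-1/p)}$ uniformly in $(\bar r,\bar z)$, whence \eqref{Sest3}. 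For \eqref{Sest4}, I split $\mu = \mu_c + \mu_{pp}$, with $\mu_c := \mu_{ac} + \mu_{sc}$ the nonatomic part. Writing $\mu_{pp} = \sum_j c_j \delta_{(r_j, z_j)}$ with $\sum_j |c_j| = \|\mu_{pp}\|_\tv$ and applying \eqref{Sest3} to each Dirac mass yields $t^{1-1/p}\|S(t)\mu_{pp}\|_{L^p(\Omega)} \le C\,\|\mu_{pp}\|_\tv$ uniformly in $t > 0$. By subadditivity of $\limsup$, \eqref{Sest4} will follow once I prove $L_p(\mu_c) = 0$.

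The heart of the argument is the case $p = \infty$ for $\mu_c$. From \eqref{point1}, $t\,\|S(t)\mu_c\|_{L^\infty(\Omega)} \le C\,\Phi(t)$, where
\[
\Phi(t) \,:=\, \sup_{(r,z)\in\Omega}\int_\Omega e^{-((r-\bar r)^2+(z-\bar z)^2)/(5t)}\,d|\mu_c|(\bar r,\bar z)~,
\]
so it suffices to show $\Phi(t) \to 0$ as $t \to 0$ for any finite nonatomic measure $|\mu_c|$. I expect this uniform convergence to be the principal technical obstacle: the pointwise limit follows immediately from $|\mu_c|(\{(r,z)\}) = 0$ by dominated convergence, but upgrading it to a uniform statement requires a tightness/compactness argument. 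I will proceed by contradiction, assuming there exist $\epsilon > 0$, $t_n \to 0$ and $(r_n, z_n) \in \Omega$ with integral at least $\epsilon$. Since $|\mu_c|$ is finite, tightness confines the essential contribution to a compact set, ruling out any subsequence with $|(r_n, z_n)| \to \infty$. For a bounded subsequence with accumulation point $(r_\star, z_\star)$, splitting the integration between a small ball around $(r_\star, z_\star)$ and its complement, then letting first $n \to \infty$ and afterwards the radius tend to zero, forces $|\mu_c|(\{(r_\star, z_\star)\}) \ge \epsilon$, contradicting nonatomicity.

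The remaining cases $1 < p < \infty$ follow by interpolation. Combining the pointwise inequality $\|f\|_{L^p}^p \le \|f\|_{L^\infty}^{p-1}\|f\|_{L^1}$ with \eqref{Sest3} at $p = 1$ yields
\[
t^{1-\frac1p}\|S(t)\mu_c\|_{L^p(\Omega)} \,\le\, \bigl(t\,\|S(t)\mu_c\|_{L^\infty(\Omega)}\bigr)^{1-\frac1p}\bigl(\|S(t)\mu_c\|_{L^1(\Omega)}\bigr)^{\frac1p}~,
\]
whose first factor tends to zero by the previous step while the second is uniformly bounded by $(C\|\mu_c\|_\tv)^{1/p}$. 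Hence $L_p(\mu_c) = 0$, which combined with the atomic bound completes the proof of \eqref{Sest4}.
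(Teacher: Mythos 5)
Your proposal is correct and follows essentially the same route as the paper: the Gaussian pointwise bound \eqref{point1} for \eqref{Sest3}, the split of $\mu$ into its atomic and non-atomic parts with the atomic part controlled by \eqref{Sest3}, reduction of the non-atomic case to $p=\infty$ followed by interpolation against the $L^1$ bound, and a compactness-plus-contradiction argument exploiting finiteness and non-atomicity of the measure. The only (cosmetic) difference is organizational: the paper first isolates the uniform estimate $\sup_{\xi}|\mu|(B(\xi,\delta))\le\epsilon$ as a standalone claim and then evaluates $S(t)\mu$ at its maximizing point, whereas you run the contradiction directly on a sequence of near-maximizing points; the underlying argument is the same.
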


\begin{proof}
Estimate \eqref{Sest3} can be established as in Proposition~\ref{Sprop},
using the pointwise upper bound \eqref{point1}. To prove \eqref{Sest4}
we proceed as in the two-dimensional case \cite{Ga,GMO}, with
minor modifications. We know from \eqref{Sest3} that $L_p(\mu) \le
C \|\mu\|_\tv$, hence using the canonical decomposition we find
\[
  L_p(\mu) \,\le\,  L_p(\mu_{ac}) + L_p(\mu_{sc}) + L_p(\mu_{pp})
  \,\le\, L_p(\mu_{ac}) + L_p(\mu_{sc}) + C \|\mu_{pp}\|_\tv~.
\]
Therefore, we only need to show that $L_p(\mu_{ac})  = L_p(\mu_{sc})
= 0$. In fact, it is sufficient to prove that for $p = \infty$,
because the result then follows for $1 < p < \infty$ by interpolation.  

From now on, we thus assume that $\mu$ is a non-atomic finite measure
on $\Omega$, and we denote by $|\mu|$ the positive measure which
represents the total variation of $\mu$. Given any point $\xi = (r,z)
\in \R^2$ and any radius $\delta > 0$, we define
\[
  B(\xi,\delta) \,=\, \bigl\{\bar\xi \in \Omega\,\big|\,
  |\xi - \bar\xi| \le \delta\bigr\}~,
\]
where $|\xi - \bar\xi|$ is the Euclidean distance between $\xi$ and
$\bar \xi$. We claim that, for any $\epsilon > 0$, there exists
$\delta > 0$ such that
\begin{equation}\label{abscont}
  \sup_{\xi \in \Omega}|\mu|(B(\xi,\delta)) \,\le\, \epsilon~.
\end{equation}
Indeed, if that property fails, there exist $\epsilon > 0$, a sequence
$(\xi_n)$ of points of $\Omega$, and a sequence $(\delta_n)$ of
positive real numbers such that $\delta_n \to 0$ as $n \to \infty$ and
$|\mu|(B(\xi_n,\delta_n)) > \epsilon$ for all $n \in \N$. It is clear
that the sequence $(\xi_n)$ is bounded, because $|\mu|$ is a finite
measure. Thus, after extracting a subsequence, we can assume that
$\xi_n$ converges as $n \to \infty$ to some point $\bar \xi = (\bar
r,\bar z)\in \bar \Omega$. For any $\delta > 0$ we thus have
$|\mu|(B(\bar\xi,\delta)) > \epsilon$, since $B(\bar\xi,\delta)
\supset B(\xi_n,\delta_n)$ when $n$ is sufficiently large. We conclude
that
\[
  |\mu|\Bigl(\bigcap_{\delta > 0} B(\bar\xi,\delta)\Bigr)
  \,\ge\, \epsilon \,>\,0~.
\]
But this is impossible, because the intersection above is empty if
$\bar\xi \in \partial\Omega$ and equal to the singleton $\{\bar\xi\}$
if $\bar\xi \in \Omega$, and we assumed that the measure $\mu$ is
non-atomic. Hence property \eqref{abscont}, which can be interpreted
as a weak form of absolute continuity with respect to Lebesgue's
measure, must hold.

Now, for any given $t > 0$, there exists $\bar\xi(t) \in \Omega$ such
that
\[
  |(S(t)\mu)(\bar\xi(t))| \,=\, \|S(t)\mu\|_{L^\infty(\Omega)}~,
\]
because the map $\xi \mapsto (S(t)\mu)(\xi)$ is continuous and vanishes at
infinity as well as on the boundary $\partial\Omega$. Using definition
\eqref{Sdef2} and the pointwise estimate \eqref{point1}, we thus obtain
\[
  t \|S(t)\mu\|_{L^\infty(\Omega)} \,\le\, C\int_{B(\bar\xi(t),\delta)}
  e^{-\frac{|\xi-\bar\xi(t)|^2}{5t}}\dd|\mu|(\xi) + C\int_{\Omega\setminus
  B(\bar\xi(t),\delta)} e^{-\frac{|\xi-\bar\xi(t)|^2}{5t}}\dd|\mu|(\xi)~,
\]
where $\epsilon$ and $\delta$ are as in \eqref{abscont}. The first
integral is bounded by $C |\mu| (B(\bar\xi(t),\delta)) \le C\epsilon$, and
the second one by $C e^{-\delta^2/(5t)}|\mu|(\Omega)$. It follows that
\[
  L_\infty(\mu) \,=\, \limsup_{t \to 0}\,t\|S(t)\mu\|_{L^\infty(\Omega)}
  \,\le\, C\epsilon~,
\]
and since $\epsilon > 0$ was arbitrary we conclude that
$L_\infty(\mu) = 0$, which is the desired result.
\end{proof}

\begin{prop}\label{propex2}
There exist positive constants $\epsilon$ and $C$ such that,
for any initial data $\omega_0 \in \cM(\Omega)$ with
$\|(\omega_0)_\pp\|_\tv \le \epsilon$, one can choose $T = T(\omega_0)
> 0$ such that the integral equation \eqref{omint2} has a unique
solution $\omega_\theta \in C^0((0,T],L^1(\Omega) \cap L^\infty(\Omega))$
satisfying \eqref{main2prop} and such that $\omega_\theta(t)
\weakto \omega_0$ as $t \to 0$. Moreover, if $\|\omega_0\|_\tv$ is
small enough, the local existence time  $T > 0$ can be taken
arbitrarily large.
\end{prop}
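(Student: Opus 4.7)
The plan is to repeat the fixed point argument of Proposition~\ref{propex1} in the same space $X_T$, replacing the bound on the linear part $\omega_\lin(t) = S(t)\omega_0$ by the new estimate \eqref{Sest4}. The crucial difference is that, for a general measure $\mu \in \cM(\Omega)$, the quantity $C_1(\mu,T) := \sup_{0<t\le T}t^{1/4}\|S(t)\mu\|_{L^{4/3}(\Omega)}$ no longer vanishes as $T \to 0$, but Proposition~\ref{Sprop3} gives $\limsup_{T\to 0}C_1(\mu,T) \le C\|\mu_\pp\|_\tv \le C\epsilon$. Thus by choosing $T$ small enough we achieve $C_1(\mu,T) \le 2C\epsilon$, and the smallness assumption on the atomic part is precisely what allows the linear term to fit inside a ball on which the nonlinear map $\cF$ of \eqref{cFdef} is contracting.

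More concretely, I would pick $R > 0$ with $2C_3 R < 1$, where $C_3$ is the constant from the Lipschitz bound \eqref{cFLip} (which is proved word for word as in Proposition~\ref{propex1}, since it only uses H\"older's inequality, the Biot-Savart estimate \eqref{BSest1}, and the semigroup estimate \eqref{Sest2}). Requiring $2C\epsilon \le R/2$ fixes the size of $\epsilon$. Then for $T$ small enough so that $C_1(\mu,T) \le R/2$, the map $\cG\omega_\theta = \omega_\lin - \cF\omega_\theta$ sends the closed ball $B_R \subset X_T$ into itself and is a strict contraction, producing a unique fixed point $\omega_\theta \in X_T$ with $\|\omega_\theta\|_{X_T} \le R$. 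If instead $\|\mu\|_\tv$ itself is small, estimate \eqref{Sest3} bounds $C_1(\mu,T)$ uniformly in $T$, and one obtains a global solution exactly as in the integrable case.

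To verify the additional properties claimed in \eqref{main2prop}, I would use the same computation as in \eqref{cFest} with $p=1$ and $p=4/3$, which gives $\|\cF\omega_\theta(t)\|_{L^1(\Omega)} \le C\|\omega_\theta\|_{X_T}^2 \to 0$ as $t \to 0$ and $\sup_t t^{1/4}\|\cF\omega_\theta(t)\|_{L^{4/3}(\Omega)} \le C R^2$. Combined with $\|S(t)\mu\|_{L^1(\Omega)} \le C\|\mu\|_\tv$ from \eqref{Sest3} and $t^{1/4}\|S(t)\mu\|_{L^{4/3}(\Omega)} \le 2C\epsilon$, this yields the two limsup bounds in \eqref{main2prop} (with a new constant, still proportional to $\epsilon$ in the second one, since $R$ is chosen of order $\epsilon$). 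The extension $\omega_\theta \in C^0((0,T],L^\infty(\Omega))$ and the $L^p$ control for $p \in (1,\infty]$ follow from the bootstrap argument based on \eqref{MNrec}, which only requires $N_{4/3}(T)$ to be finite and small, not that it tends to zero. For the weak convergence $\omega_\theta(t) \weakto \mu$, I would test against $\phi \in C_0(\Omega)$: the nonlinear contribution goes to $0$ in $L^1(\Omega)$ by the above, so only $\langle S(t)\mu,\phi\rangle \to \langle \mu,\phi\rangle$ remains, and this is standard since $S(t)^*\phi \to \phi$ uniformly on $\Omega$ (the dual semigroup is also represented by the bounded kernel \eqref{Sdef}, and one can use density of smooth test functions together with the pointwise bound \eqref{point1}).

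The main obstacle, and the reason the statement is restricted to small atomic part, is the uniqueness claim. Here the situation is more delicate than in Proposition~\ref{propex1}, because one cannot assert $\|\omega_\theta\|_{X_T} \to 0$ as $T \to 0$. I would therefore prove uniqueness directly within the class of solutions that satisfy the second estimate in \eqref{main2prop}: if $\omega_\theta$ and $\tilde\omega_\theta$ are two such solutions on $(0,T]$, then for $T$ small enough both lie in the ball $B_R \subset X_T$ where $\cG$ is a strict contraction, hence they coincide. The compactness/Brezis argument used at the end of Section~\ref{sec41} then propagates uniqueness up to the maximal existence time, since for any $t_0 \in (0,T]$ the initial datum $\omega_\theta(t_0) \in L^1(\Omega) \cap L^\infty(\Omega)$ is regular, and uniqueness from regular data was already established in Proposition~\ref{propex1}.
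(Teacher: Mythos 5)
Your fixed-point skeleton coincides with the paper's: same space $X_T$, same ball $B_R$, with estimate \eqref{Sest4} replacing the vanishing of $C_1(\omega_0,T)$ and the smallness of $\|\mu_{pp}\|_\tv$ guaranteeing $C_1(\mu,T)\le R/2$ for small $T$; the global small-data statement via \eqref{Sest3} is also as in the paper, and taking $R$ of order $\epsilon$ is a legitimate way to obtain the second bound in \eqref{main2prop}. The genuine gap is in the limit $t\to 0$. You assert that \eqref{cFest} with $p=1$ gives $\|(\cF\omega_\theta)(t)\|_{L^1(\Omega)}\le C\|\omega_\theta\|_{X_T}^2\to 0$ as $t\to 0$, and you then rely on this to discard the nonlinear contribution when proving $\omega_\theta(t)\weakto\mu$. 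But that bound is a constant independent of $t$. In the integrable case it tends to zero only because $\|\omega_\theta\|_{X_T}\to 0$ as $T\to 0$, i.e.\ property \eqref{zerotime}, and this is exactly what fails when $\mu_{pp}\neq 0$: the norm $\|\omega_\theta\|_{X_T}$ stays bounded below by a quantity comparable to the atomic part. As written, your argument only yields $\limsup_{t\to 0}\|(\cF\omega_\theta)(t)\|_{L^1(\Omega)}\le C R^2$, which suffices for the first bound in \eqref{main2prop} but not for the weak convergence to the initial measure.

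The paper closes this gap with an ingredient your proposal is missing. It introduces $\ell_p(\mu)=\limsup_{t\to 0}t^{1-\frac1p}\|\cF\omega_\lin(t)\|_{L^p(\Omega)}$ and shows $\ell_p(\mu)=0$ by a direct computation exploiting cancellations in the quadratic term $u_\lin(t)\cdot\nabla\omega_\lin(t)$ for small times (as in the two-dimensional theory); it then writes $\omega_\theta-\omega_\lin=(\cF\omega_\lin-\cF\omega_\theta)-\cF\omega_\lin$ and uses the Lipschitz bound \eqref{cFLip} to get $\delta\le 2C_3R\,\delta+\ell_{4/3}(\mu)$ for $\delta=\limsup_{t\to0}t^{1/4}\|\omega_\theta(t)-\omega_\lin(t)\|_{L^{4/3}(\Omega)}$, whence $\delta=0$ and finally $\|\omega_\theta(t)-\omega_\lin(t)\|_{L^1(\Omega)}\to 0$, from which the weak convergence follows. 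Some argument of this kind is indispensable (alternatively one could try a duality argument, pairing $\cF\omega_\theta(t)$ against Lipschitz test functions and using $\|u(s)\omega_\theta(s)\|_{L^1(\Omega)}\le CR^2 s^{-1/2}$ to gain a factor $t^{1/2}$, but you would then have to justify uniform gradient bounds for the adjoint semigroup, which you only gesture at). Your uniqueness paragraph is otherwise in the spirit of the paper's argument.
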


\begin{proof}
We briefly indicate how the proof of Proposition~\ref{propex1} has to
be modified to handle the case where $\mu = \omega_0 \in \cM(\Omega)$.
We use exactly the same function space $X_T$ defined in \eqref{XTdef},
and observe that the fixed point argument works in the ball $B_R
\subset X_T$ provided $C_1(\mu,T) \le R/2$, where $C_1(\mu,T)$ is
defined as in \eqref{C1def} and $R > 0$ satisfies $2C_3 R < 1$ with
$C_3$ as in \eqref{cFLip}. From \eqref{Sest3} we know that $C_1(\mu,T)
\le C_2 \|\mu\|_\tv$ for any $T > 0$, hence we again obtain global
existence and uniqueness in $B_R$ if the initial vorticity is small
enough so that $C_2 \|\mu\|_\tv \le R/2$.  For larger data, we can use
\eqref{Sest4} which gives
\[
  \lim_{T \to 0} C_1(\mu,T) \,=\, L_{4/3}(\mu) \,\le\,
  C_4\|\mu_{pp}\|_\tv~,
\]
for some positive constant $C_4$. Thus, if the atomic part of the
initial vorticity is small enough so that $C_4\|\mu_{pp}\|_\tv < R/2$,
we can take $T > 0$ such that $C_1(\mu,T) \le R/2$, and the fixed
point argument proves local existence and uniqueness in $B_R$.
In contrast, if $4C_3 C_4\|\mu_{pp}\|_\tv \ge 1$, it is impossible
to choose $R > 0$ and $T > 0$ so that the fixed point argument works
in the ball $B_R \subset X_T$, and the method above completely
fails.

Assuming that the fixed point argument works in the ball
$B_R \subset X_T$, we can establish some additional properties
of the solution $\omega_\theta\in B_R$ as in the case of
integrable initial data. For instance, it is straightforward to
verify that $\omega_\theta - \omega_\lin \in C^0((0,T),L^1(\Omega)
\cap L^\infty(\Omega))$, where $\omega_\lin(t) = S(t)\mu$.
However property \eqref{zerotime} fails if $\mu_{pp}
\neq 0$, and we cannot argue as in Section~\ref{sec41} to
show that $\|\omega_\theta(t) - \omega_\lin(t)\|_{L^1(\Omega)}$
converges to zero as $t \to 0$. To prove that, we first define
\[
  \delta \,=\, \limsup_{t \to 0}t^{1/4} \|\omega_\theta(t) -
  \omega_\lin(t)\|_{L^{4/3}(\Omega)} \,=\, \limsup_{T \to 0}
  \|\omega_\theta - \omega_\lin\|_{X_T}~.
\]
Since $\omega_\theta - \omega_\lin = (\cF\omega_\lin -
\cF\omega_\theta) - \cF\omega_\lin$ and $\|\omega_\lin\|_{X_T} +
\|\omega_\theta\|_{X_T} \le 2R$, we can use \eqref{cFLip} to
obtain the estimate $\delta \le 2C_3R\delta + \ell_{4/3}(\mu)$,
where
\[
  \ell_p(\mu) \,=\, \limsup_{t \to 0} t^{1-\frac1p}
  \|\cF\omega_\lin(t)\|_{L^p(\Omega)}~, \qquad 1 \le p \le
  \infty~.
\]
As in the two-dimensional case \cite[Section~2.3.4]{Ga}, a direct
calculation, which exploits some cancellations in the nonlinear term
$u_\lin(t) \cdot \nabla\omega_\lin(t)$ for small times, reveals that
$\ell_p(\mu) = 0$ for any $p \in [1,\infty]$. This in turn implies
that $\delta = 0$, since $2C_3R < 1$. Finally, using again the
relation $\omega_\theta - \omega_\lin = (\cF\omega_\lin -\cF\omega_\theta)
- \cF\omega_\lin$ we conclude that
\begin{equation}\label{zerotime2}
  \limsup_{t \to 0}\|\omega_\theta(t) - \omega_\lin(t)\|_{L^1(\Omega)}
  \,\le\, CR\delta + \ell_1(\mu) \,=\, 0~.
\end{equation}
It follows in particular from \eqref{zerotime2} that $\omega_\theta(t)
\weakto \mu$ as $t \to 0$, because we can use the explicit formula
\eqref{Sdef2} to verify that $\omega_\lin(t) \weakto \mu$ as $t \to 0$.
By construction $\omega_\theta$ is a solution of \eqref{omint}, hence 
of \eqref{omint2}, and both inequalities in \eqref{main2prop} hold. 

Finally, if $\omega_\theta \in C^0((0,T],L^1(\Omega) \cap L^\infty(\Omega))$
is a mild solution on $(0,T)$ satisfying \eqref{main2prop} and such that 
$\omega_\theta(t) \weakto \omega_0$ as $t \to 0$, we can take the
limit $t_0 \to 0$ in \eqref{omint2} and conclude that $\omega_\theta$ 
satisfies \eqref{omint}, so that $\omega_\theta$ coincides with 
the solution constructed by the fixed point argument. 
\end{proof}

\begin{rem}\label{uniqrem}
In Proposition~\ref{propex2} we only claim uniqueness of solutions
under assumption \eqref{main2prop}, which means (after restricting the
existence time) that $\omega_\theta$ belongs to the ball $B_R \subset
X_T$ where the fixed point argument works. As in the two-dimensional
case, one may conjecture that uniqueness holds among all
solutions $\omega_\theta \in C^0((0,T),L^1(\Omega) \cap
L^\infty(\Omega))$ such that $\|\omega_\theta(t)\|_{L^1(\Omega)}$ is
uniformly bounded and $\omega_\theta(t) \weakto \mu$ as $t \to 0$. We
hope to come back to that interesting question in a future work.
\end{rem}

\begin{rem}\label{smooth}
The solutions constructed in Propositions~\ref{propex1} and 
\ref{propex2} are in fact smooth for positive times, and satisfy 
the axisymmetric vorticity equation \eqref{omeq2} in the classical 
sense. This can be proved using standard smoothing properties
of the Navier-Stokes equations that are not specific to the 
axisymmetric case, see e.g. \cite{KNSS} and Proposition~\ref{Lpderprop} 
below. 
\end{rem}

\section{A priori estimates and global existence}\label{sec5}

We continue the proof of Theorems~\ref{main1} and \ref{main2} by
showing that the local solutions constructed in Sections~\ref{sec41}
and \ref{sec42} can be extended to global solutions for positive
times. Since we are not interested in the behavior for small times, we
can assume without loss of generality that the initial vorticity is
integrable. Let thus $\omega_\theta \in C^0([0,T],L^1(\Omega)) \cap
C^0((0,T],L^\infty(\Omega))$ be a solution of the integral equation
\eqref{omint}, hence also of the differential equation \eqref{omeq2},
with initial data $\omega_0 \in L^1(\Omega)$. Our goal here is to
derive a priori estimates on various norms of $\omega_\theta$.

\begin{lem}\label{L1lem}
The solution of \eqref{omint} satisfies $\|\omega_\theta(t)\|_{L^1(\Omega)} 
\le \|\omega_0\|_{L^1(\Omega)}$ for all $t \in [0,T]$. Moreover, if 
$\omega_0 \not\equiv 0$, the map $t \mapsto \|\omega_\theta(t)
\|_{L^1(\Omega)}$ is strictly decreasing.
\end{lem}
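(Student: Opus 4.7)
My plan is to differentiate $t\mapsto\|\omega_\theta(t)\|_{L^1(\Omega)}$ for $t>0$, exploiting the smoothness of $\omega_\theta$ (Remark~\ref{smooth}), and to identify the dissipation as a boundary flux at the axis $r=0$. I would multiply \eqref{omeq2} by a convex regularisation $\phi_\epsilon'(\omega_\theta)$ of $\mathrm{sgn}(\omega_\theta)$, say with $\phi_\epsilon(s)=\sqrt{s^2+\epsilon^2}-\epsilon$ so that $\phi_\epsilon(0)=0$, integrate over $\Omega$ with respect to $\dd r\dd z$, and let $\epsilon\to 0$ (equivalently, invoke Kato's inequality $\mathrm{sgn}(\omega_\theta)\Delta\omega_\theta\le\Delta|\omega_\theta|$). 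The transport term then reduces to $\int_\Omega\div_*(u|\omega_\theta|)\dd r\dd z$, which vanishes because $u_r|\omega_\theta|$ is zero at $r=0$ by the Dirichlet condition and decays sufficiently at infinity.

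For the remaining terms, I would compute separately
\[
\int_\Omega\bigl(\partial_r^2+\partial_z^2\bigr)|\omega_\theta|\dd r\dd z \,=\, -\int_\R\partial_r|\omega_\theta|(0,z,t)\dd z,
\]
and, after an integration by parts in $r$,
\[
\int_\Omega\frac{1}{r}\partial_r|\omega_\theta|\dd r\dd z \,=\, -\int_\R\partial_r|\omega_\theta|(0,z,t)\dd z + \int_\Omega\frac{|\omega_\theta|}{r^2}\dd r\dd z.
\]
The volume term on the right exactly cancels the damping produced by $-\omega_\theta/r^2$ in \eqref{omeq}, leaving
\[
\frac{d}{dt}\|\omega_\theta(t)\|_{L^1(\Omega)} \,\le\, -2\int_\R|\partial_r\omega_\theta(0,z,t)|\dd z \,\le\, 0,
\]
since $\partial_r|\omega_\theta|(0,z,t)=|\partial_r\omega_\theta(0,z,t)|$ by the Dirichlet condition. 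Integrating in time and using the $L^1$-continuity at $t=0$ from Proposition~\ref{propex1} yields $\|\omega_\theta(t)\|_{L^1(\Omega)}\le\|\omega_0\|_{L^1(\Omega)}$.

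For the strict decrease when $\omega_0\not\equiv 0$, I would argue by contradiction: if $\|\omega_\theta(\cdot)\|_{L^1(\Omega)}$ were constant on some $[t_1,t_2]\subset(0,T]$, the inequality above would force $\partial_r\omega_\theta(0,z,t)=0$ for almost every $(z,t)$ there. Combined with $\omega_\theta(0,z,t)=0$, this means $\eta=\omega_\theta/r$ vanishes on the axis $r=0$ throughout $[t_1,t_2]$. Since $\eta$ solves the smooth parabolic equation \eqref{etaeq} with Neumann condition at $r=0$, its even extension across $r=0$ yields a smooth solution of a parabolic equation with bounded drift on all of $\R^2\times[t_1,t_2]$, and classical unique continuation forces $\eta\equiv 0$ there. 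Propagating backward in time via continuous dependence on initial data (Remark~\ref{Texist}) then forces $\omega_0\equiv 0$, contradicting our assumption.

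The main obstacle is the strict-decrease part: the monotonicity is a routine boundary-flux computation once the cancellation between the $\frac{1}{r}\partial_r$ contribution and the zeroth-order damping is recognised, whereas the unique-continuation step requires some care in setting up the even reflection of $\eta$ and in invoking a Carleman-type result for parabolic equations with bounded coefficients on $\R^2$.
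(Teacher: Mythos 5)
Your derivation of the monotonicity is essentially a Kato-inequality repackaging of the computation the paper performs: the boundary-flux identity
$\frac{\D}{\D t}\int_\Omega|\omega_\theta|\dd r\dd z\le -2\int_\R\partial_r|\omega_\theta|(0,z,t)\dd z$
is the same as \eqref{L1decay}, and your cancellation between the $\frac1r\partial_r$ contribution and the $-\omega_\theta/r^2$ damping is correct. The paper instead decomposes $\omega_\theta=\omega_\theta^+-\omega_\theta^-$, with both parts advected by the \emph{same} velocity field $u$ via \eqref{ompm}, so that each piece is non-negative and the computation can be done without regularizing the absolute value. That choice is not cosmetic: it is precisely what makes the strict-decrease part immediate, because for a non-negative, not identically zero solution the strong maximum principle gives $\omega_\theta^\pm>0$ in the interior and Hopf's lemma then gives $\partial_r\omega_\theta^\pm(0,z,t)>0$ pointwise, so the boundary flux is strictly negative.

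The strict-decrease part of your argument has a genuine gap. First, the even extension of $\eta$ across $r=0$ does \emph{not} solve a parabolic equation with bounded drift on $\R^2$: equation \eqref{etaeq} reads $\partial_t\eta+u\cdot\nabla\eta=\partial_r^2\eta+\partial_z^2\eta+\frac{3}{r}\partial_r\eta$, and the coefficient $3/r$ is unbounded at the axis (it is not even in $L^p_{\loc}(\R^2)$ for any $p\ge1$), so Carleman estimates for bounded or $L^p$ drifts do not apply. One can make the drift bounded by viewing $\eta$ as a function on $\R^5$ (where $\partial_r^2+\frac{3}{r}\partial_r$ is the radial Laplacian and $u_r/r$ is bounded by \eqref{BSest5}), but then the set on which $\eta$ vanishes is the codimension-four axis $\{y=0\}$, and unique continuation from such a thin set is not a classical result --- indeed it is false without further structure: $\eta=y_1y_2$ is a nonzero caloric function on $\R^5$ vanishing together with its gradient on the axis. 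Knowing only $\eta=0$ (and, by axisymmetry, $\nabla\eta=0$) on the axis for $t\in[t_1,t_2]$ therefore does not force $\eta\equiv0$ by any off-the-shelf theorem. To close the argument you would either need to first show that constancy of the $L^1$ norm forces $\omega_\theta$ to have a definite sign (using that the interior Kato defect measure must also vanish) and then apply Hopf's lemma, or simply adopt the paper's decomposition from the outset, which reduces everything to the positive case where the maximum principle does all the work.
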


\begin{proof}
We first assume that $\omega_0 \ge 0$ and $\omega_0 \not\equiv 0$.
By the strong maximum principle, the solution $\omega_\theta(t)$
of \eqref{omeq2} is strictly positive for $t \in (0,T]$. Integrating
by parts and using the fact that $\omega_\theta(t)$ satisfies the
homogeneous Dirichlet boundary condition on $\partial\Omega$,
we easily find
\begin{equation}\label{L1decay}
  \frac{\D}{\D t}\int_\Omega \omega_\theta(r,z,t)\dd r \dd z
  \,=\, -2\int_\R \partial_r \omega_\theta(0,z,t)\dd z \,<\, 0~,
\end{equation}
where the last inequality follows from Hopf's lemma. This proves the
claim for positive solutions. Note that, in deriving \eqref{L1decay},
we did not use the precise expression of the velocity field $u$ in
\eqref{omeq2}. In the general case where $\omega_0$ can change sign,
we decompose $\omega_\theta(t) = \omega_\theta^+(t) - \omega_\theta^-(t)$,
where $\omega_\theta^\pm(t)$ are defined as the solutions of the linear
equations
\begin{equation}\label{ompm}
  \partial_t \omega_\theta^\pm + \div_*(u \,\omega_\theta^\pm)
  \,=\, \Bigl(\partial_r^2 + \partial_z^2 + \frac{1}{r}\partial_r
  - \frac{1}{r^2}\Bigr)\omega_\theta^\pm~,
\end{equation}
with initial data $\omega_\theta^\pm(0) = \max(\pm\omega_0,0) \ge 0$.
Both equations in \eqref{ompm} involve the same velocity field $u$,
which is associated to the full solution $\omega_\theta$ via the
axisymmetric Biot-Savart law \eqref{BSu}. The analogue of
\eqref{L1decay} holds for the solutions $\omega_\theta^\pm(t)$ of
\eqref{ompm}, hence
\begin{align*}
  \|\omega_\theta(t)\|_{L^1(\Omega)} \,&\le\, \int_\Omega
  \Bigl(\omega_\theta^+(r,z,t) + \omega_\theta^-(r,z,t)\Bigr)
  \dd r\dd z \\ \,&\le\, \int_\Omega \Bigl(\omega_\theta^+(r,z,0)
  + \omega_\theta^-(r,z,0)\Bigr)\dd r\dd z  \,=\,
  \|\omega_0\|_{L^1(\Omega)}~, \qquad 0 \le t \le T~.
\end{align*}
For $t > 0$, the second inequality is strict if either
$\omega_\theta^+(0)$ or $\omega_\theta^-(0)$ is nonzero, and if both
quantities are nonzero the first inequality is also strict (by the
strong maximum principle). If $\omega_0 \not\equiv 0$, this proves
that the $L^1$ norm of the solution $\omega_\theta(t)$ is strictly
decreasing at initial time, and a similar argument shows that it is
strictly decreasing over the whole interval $[0,T]$.
\end{proof}

Higher $L^p$ norms of the vorticity $\omega_\theta$ are more difficult
to control, because the velocity field in \eqref{omeq2} does not
satisfy $\div_* u = 0$. As in \cite{La,UY}, we thus consider the
related quantity $\eta = \omega_\theta/r$, which satisfies
Eq.~\eqref{etaeq} with initial data $\eta_0 = \omega_0/r \in
L^1(\R^3)$.  Using the existence result in Proposition~\ref{propex1}
and the weighted estimates on the linear semigroup given in
Proposition~\ref{Sprop2}, it is easy to verify that $\eta \in
C^0([0,T],L^1(\R^3)) \cap C^0((0,T],L^\infty(\R^3))$. Moreover, by
Lemma~\ref{L1lem}, the map $t \mapsto \|\eta(t)\|_{L^1(\R^3)} \equiv
\|\omega_\theta(t)\|_{L^1(\Omega)}$ is decreasing for nonzero
solutions.  Since the advection field $u - (2/r)e_r$ in \eqref{etaeq}
satisfies
\[
  \div\Bigl(u - \frac{2e_r}{r}\Bigr) \,=\, -2\div \frac{e_r}{r}
  \,=\, -4\pi \delta_{r=0} \,\le\, 0~,
\]
a classical method due to Nash \cite{Na} gives the following a priori
estimate\:

\begin{lem}\label{Lplem} {\bf \cite[Lemma~3.8]{FS}}
For any initial data $\eta_0 \in L^1(\R^3)$, the solution
of \eqref{etaeq} satisfies, for $1 \le p \le \infty$,
\begin{equation}\label{Lp1}
  \|\eta(t)\|_{L^p(\R^3)} \,\le\, \frac{C}{t^{\frac32(1-\frac1p)}}
  \,\|\eta_0\|_{L^1(\R^3)}~, \qquad 0 < t \le T~.
\end{equation}
\end{lem}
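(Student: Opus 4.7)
The plan is to apply Nash's classical energy technique to the linear advection-diffusion equation satisfied by $\eta$. Viewing the velocity field $u$ as prescribed, \eqref{etaeq} can be rewritten as
\[
  \partial_t \eta + v \cdot \nabla \eta \,=\, \Delta \eta, \qquad v \,:=\, u - \frac{2}{r}\,e_r,
\]
on $\R^3$, where the key feature (already noted in the excerpt) is that $\div v = -4\pi\delta_{\{r=0\}} \le 0$ in the distributional sense. Since $\omega = \omega_\theta e_\theta$ satisfies a genuine 3D transport-diffusion equation, $\eta = \omega_\theta/r$ is smooth and rapidly decaying for $t > 0$, so the manipulations below are legitimate; if desired, one can regularize by replacing $2/r$ with $2/(r+\varepsilon)$ and pass to the limit $\varepsilon \to 0$.

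The core step is a standard $L^p$ energy estimate. Multiplying the equation by $|\eta|^{p-2}\eta$ and integrating against $dx$ on $\R^3$, the convective term becomes $-\tfrac{1}{p}\int(\div v)|\eta|^p\,dx \ge 0$ and is discarded, while the dissipation contributes $-\tfrac{4(p-1)}{p^2}\|\nabla|\eta|^{p/2}\|_{L^2(\R^3)}^2$. Thus for $1 < p < \infty$,
\[
  \frac{d}{dt}\|\eta\|_{L^p(\R^3)}^p \,\le\, -\frac{4(p-1)}{p}\,\bigl\|\nabla|\eta|^{p/2}\bigr\|_{L^2(\R^3)}^2.
\]
Specializing to $p = 2$ and invoking Nash's inequality on $\R^3$, $\|g\|_2^{10/3} \le C\|\nabla g\|_2^2\|g\|_1^{4/3}$, together with the bound $\|\eta(t)\|_{L^1(\R^3)} \le \|\eta_0\|_{L^1(\R^3)}$ from Lemma~\ref{L1lem}, produces the differential inequality $y' \le -C y^{5/3}/\|\eta_0\|_{L^1(\R^3)}^{4/3}$ for $y := \|\eta\|_{L^2(\R^3)}^2$. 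Integration gives $\|\eta(t)\|_{L^2(\R^3)} \le C t^{-3/4}\|\eta_0\|_{L^1(\R^3)}$, which is \eqref{Lp1} at $p = 2$.

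To propagate the bound to the full range $1 \le p \le \infty$, the cleanest route is duality in the Nash-Varopoulos spirit. The formal $L^2$ adjoint $L^{*} = \Delta + v\cdot\nabla + (\div v)$ inherits the same structure: its zeroth-order potential $\div v$ is nonpositive, the $L^1$ norm of nonnegative solutions of the adjoint equation is conserved, and an identical Nash computation yields $\|T_t^{*}\|_{L^1\to L^2} \le C t^{-3/4}$. Dualizing gives $\|T_t\|_{L^2\to L^\infty} \le C t^{-3/4}$, and composing on $[0,t/2]$ and $[t/2,t]$ produces $\|\eta(t)\|_{L^\infty(\R^3)} \le C t^{-3/2}\|\eta_0\|_{L^1(\R^3)}$, which is \eqref{Lp1} at $p = \infty$. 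Interpolation with the $L^1$ contraction covers intermediate $p$. The main delicate point is the singular distribution $-4\pi\delta_{\{r=0\}}$ in $\div v$, which is what makes the integration by parts and the adjoint argument require care; either the regularization indicated above, or working directly with the smooth 3D field $\omega_\theta e_\theta$ and transferring estimates back to $\eta$ via the weighted identities of Proposition~\ref{Sprop2}, removes the difficulty.
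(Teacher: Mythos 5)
Your argument is exactly the one the paper has in mind: it states this lemma by citing Feng--\v{S}ver\'ak and simply notes that, since the drift $u - \tfrac{2}{r}e_r$ has nonpositive (distributional) divergence, ``a classical method due to Nash'' gives the bound, which is precisely the $L^2$ Nash iteration plus duality and interpolation that you carry out. The details check out (your only slips are cosmetic: the adjoint evolution only makes the $L^1$ norm non-increasing rather than conserved, which is all you need, and for the time-dependent drift the duality step should be phrased with the two-parameter propagator $T_{s,t}$), so this is a correct filling-in of the same proof.
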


\noindent
Equivalently, the axisymmetric vorticity $\omega_\theta$
satisfies, for $p \in [1,\infty]$, the weighted estimate
\begin{equation}\label{Lp2}
  \|r^{\frac1p-1} \omega_\theta(t)\|_{L^p(\Omega)} \,\le\,
  \frac{C}{t^{\frac32(1-\frac1p)}}\,\|\omega_0\|_{L^1(\Omega)}~,
  \qquad 0 < t \le T~.
\end{equation}
Using \eqref{Lp2}, we now establish our main a priori estimate on
the solutions of \eqref{omeq}.

\begin{prop}\label{Lpprop}
Any solution $\omega_\theta \in C^0([0,T],L^1(\Omega)) \cap C^0((0,T],
L^\infty(\Omega))$ of \eqref{omint} with initial data $\omega_0 \in 
L^1(\Omega)$ satisfies, for all $p \in [1,\infty]$,  
\begin{equation}\label{Lpest}
  \|\omega_\theta(t)\|_{L^p(\Omega)} \,\le\,
  \frac{C_p(\|\omega_0\|_{L^1(\Omega)})}{t^{1-\frac1p}}~,
  \qquad 0 < t \le T~,
\end{equation}
where $C_p(s) = \cO(s)$ as $s \to 0$.
\end{prop}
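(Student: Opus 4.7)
The plan is to combine an $L^p$ energy estimate with the 2D Nash inequality and a scale-invariant bound on the stretching coefficient $u_r/r$ coming from the weighted estimates of Lemma~\ref{Lplem}. By Remark~\ref{smooth}, the solution is smooth for positive times, so we may work with the classical form of the equation. The first task is to extract from Lemma~\ref{Lplem} the $p=\infty$ bound $\|\eta(t)\|_{L^\infty(\R^3)}\le Ct^{-3/2}\|\omega_0\|_{L^1(\Omega)}$, which combined with $\|\eta(t)\|_{L^1(\R^3)}=\|\omega_\theta(t)\|_{L^1(\Omega)}\le\|\omega_0\|_{L^1(\Omega)}$ from Lemma~\ref{L1lem} and inserted into \eqref{BSest5} produces the key scale-invariant estimate
\[
\|u_r(t)/r\|_{L^\infty(\Omega)}\,\le\,K\|\omega_0\|_{L^1(\Omega)}/t,\qquad t>0.
\]

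Next, for $p\ge 2$, I would multiply \eqref{omeq} by $|\omega_\theta|^{p-2}\omega_\theta$ and integrate over $\Omega$ with respect to $\D r\D z$. Using $\div_* u=-u_r/r$ to integrate by parts the advective term (so that it combines with the stretching term into $(1-1/p)\int(u_r/r)|\omega_\theta|^p$), exploiting the boundary condition $\omega_\theta|_{r=0}=0$, applying the pointwise bound $|\omega|^{p-2}|\nabla\omega|^2\ge (4/p^2)|\nabla|\omega|^{p/2}|^2$, and dropping the nonpositive absorption term $-(p-1)\int|\omega_\theta|^p/r^2$, one arrives at
\[
\frac{\D}{\D t}\|\omega_\theta\|_{L^p(\Omega)}^p \,\le\, (p-1)\|u_r/r\|_{L^\infty}\|\omega_\theta\|_{L^p(\Omega)}^p - \frac{4(p-1)}{p}\bigl\|\nabla_{r,z}|\omega_\theta|^{p/2}\bigr\|_{L^2(\Omega)}^2.
\]
The 2D Nash inequality $\|f\|_{L^2}^4\le C\|\nabla f\|_{L^2}^2\|f\|_{L^1}^2$ applied with $f=|\omega_\theta|^{p/2}$ then lower-bounds the dissipation by $c\|\omega_\theta\|_{L^p}^{2p}/\|\omega_\theta\|_{L^{p/2}}^p$.

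The argument proceeds by induction on $p$. Assume inductively $\|\omega_\theta(t)\|_{L^{p/2}}\le D_{p/2}/t^{1-2/p}$ with $D_{p/2}=O(\|\omega_0\|_{L^1})$, the base case $p=2$ being Lemma~\ref{L1lem}. Setting $y(t)=\|\omega_\theta(t)\|_{L^p}^p$ and substituting Step~1, the energy estimate becomes
\[
y'(t)\,\le\, A_p\,y(t)/t - B_p\,t^{p-2}y(t)^2,
\]
with $A_p=(p-1)K\|\omega_0\|_{L^1}$ and $B_p$ proportional to $1/D_{p/2}^p$. A direct check shows that $\psi(t)=\lambda/t^{p-1}$ with $\lambda=(A_p+p-1)/B_p$ is a super-solution; the initial condition $y(t)t^{p-1}\to 0$ as $t\to 0$, provided by \eqref{asym1} of Proposition~\ref{propex1}, closes the comparison and yields $\|\omega_\theta(t)\|_{L^p}\le\lambda^{1/p}/t^{1-1/p}$ with $\lambda^{1/p}=O(\|\omega_0\|_{L^1})$. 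Iterating along $p=2,4,8,\ldots$ and H\"older-interpolating with $L^1$ covers all $p\in[1,\infty)$.

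For $p=\infty$ the energy method degenerates, so I would instead revisit the integral equation \eqref{omint}, splitting the Duhamel integral at $s=t/2$ and applying \eqref{Sest2} together with \eqref{BSest1} and the bounds just obtained for large finite $p$, using two different auxiliary exponents $a_1<2$ on $(0,t/2]$ and $a_2>2$ on $[t/2,t)$ to overcome the contradictory endpoint constraints. Each piece scales as $\|\omega_0\|_{L^1}(1+\|\omega_0\|_{L^1})/t$, which yields the $p=\infty$ case. The main obstacle is verifying that the induction faithfully preserves the $O(\|\omega_0\|_{L^1})$ dependence through the chain of doublings, and that the super-solution comparison extends rigorously down to $t=0$, where the decay rate \eqref{asym1} from the local fixed point argument is indispensable.
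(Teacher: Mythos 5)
Your argument is correct, and its backbone coincides with the paper's: the decisive ingredients are the scale-invariant bound $\|u_r(t)/r\|_{L^\infty(\Omega)}\le C\|\omega_0\|_{L^1(\Omega)}/t$ obtained from Lemma~\ref{Lplem} together with \eqref{BSest5}, and the Nash inequality applied to the $L^2$-type energy identity. Where you diverge is in passing from $p=2$ to larger exponents. The paper performs the energy computation only for $p=2$ and then reaches every $p\in(2,\infty]$ through the Duhamel bootstrap \eqref{intsplit}--\eqref{MNrec}; you instead run a Moser-type iteration, repeating the $L^p$ energy estimate along $p=2,4,8,\dots$ with the Nash inequality applied to $|\omega_\theta|^{p/2}$ and the induction hypothesis feeding the dissipation term, reverting to the Duhamel splitting only at $p=\infty$ where the energy method degenerates. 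Both routes are sound; your computation of the combined advection/stretching contribution $(p-1)\int(u_r/r)|\omega_\theta|^p$ via $\div_* u=-u_r/r$, and the sign of the discarded term $-(p-1)\int|\omega_\theta|^p/r^2$ (after the $\frac1r\partial_r$ part of the Laplacian is integrated by parts), are right, and since each fixed $p$ requires only finitely many doublings the $\cO(\|\omega_0\|_{L^1})$ dependence survives the iteration. Two minor remarks. First, your super-solution comparison for the Riccati inequality $y'\le A_p y/t-B_p t^{p-2}y^2$ invokes \eqref{asym1} to start the comparison near $t=0$; this is legitimate (any solution in the stated class coincides with the fixed-point solution by the uniqueness part of Proposition~\ref{propex1}, hence satisfies \eqref{asym1}), but it is avoidable: setting $z=1/y$ turns the inequality into a linear one, $(t^{A_p}z)'\ge B_p t^{p-2+A_p}$, which integrates from $t_0$ to $t$ using only $z(t_0)\ge0$ and gives the same $\lambda$ after letting $t_0\to0$ --- this is exactly the substitution $f=t^\alpha g$ used in the paper for $p=2$, and it makes the a priori estimate genuinely independent of the short-time behavior. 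Second, your energy identities require smoothness and decay for positive times, which you correctly source from Remark~\ref{smooth}; the paper operates at the same level of rigor in \eqref{Lpder}.
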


\begin{proof}
We can assume without loss of generality that $M := \|\omega_0\|_{L^1(\Omega)}
> 0$. We know from Lemma~\ref{L1lem} that $\|\omega_\theta(t)\|_{L^1(\Omega)}
\le M$ for $t \in [0,T]$, hence \eqref{Lpest} holds for $p = 1$. To prove
\eqref{Lpest} for $p = 2$, we compute
\begin{equation}\label{Lpder}
  \frac{\D}{\D t}\int_\Omega \omega_\theta^2\dd r \dd z \,=\,
  -2\int_\Omega |\nabla \omega_\theta|^2 \dd r \dd z + \int_\Omega
  \Bigl(\frac{u_r}{r} - \frac{1}{r^2}\Bigr)\omega_\theta^2 \dd r \dd z~.
\end{equation}
The celebrated Nash inequality \cite{Na} asserts that
\[
  \int_\Omega \omega_\theta^2 \dd r \dd z \,\le\, C
  \left(\int_\Omega |\omega_\theta| \dd r \dd z\right)
  \left(\int_\Omega |\nabla\omega_\theta|^2 \dd r \dd z\right)^{1/2}
  \,\le\, C M \left(\int_\Omega |\nabla\omega_\theta|^2 \dd r \dd z
  \right)^{1/2}~.
\]
On the other hand, using estimate \eqref{Lp2} with $p = \infty$
and Proposition~\ref{BSprop3}, we obtain
\[
  \Bigl\|\frac{u_r(t)}{r}\Bigr\|_{L^\infty(\Omega)} \,\le\, 
  C \,\|\omega_\theta(t)\|_{L^1(\Omega)}^{1/3} \,\Bigl\|\frac{\omega_\theta(t)}{r}
  \Bigr\|_{L^\infty(\Omega)}^{2/3} \,\le\, \frac{C M}{t}~.
\]
Thus, if we define
\[
  f(t) \,=\, \int_\Omega \omega_\theta(r,z,t)^2\dd r \dd z~,
  \qquad 0 \le t \le T~,
\]
we deduce from \eqref{Lpder} that $f : [0,T] \to \R$ satisfies the
differential inequality
\begin{equation}\label{diffineq}
  f'(t) \,\le\, -\frac{K_1}{M^2}\,f(t)^2 + \frac{K_2 M}{t}\,f(t)~,
  \qquad 0 < t \le T~,
\end{equation}
where $K_1, K_2$ are positive constants. If we set $f(t) = t^\alpha
g(t)$ with $\alpha = K_2 M$, we see that \eqref{diffineq} reduces to
the simpler differential inequality $g'(t) \le -K_1M^{-2}t^\alpha g(t)^2$,
which can be integrated of the time interval $[t_0,t] \subset
(0,T]$ to give the bound
\[
  \frac{1}{g(t)} \,\ge\,  \frac{1}{g(t_0)} + \frac{K_1}{M^2}\,
  \frac{1}{\alpha+1}\Bigl(t^{\alpha+1} - t_0^{\alpha+1}\Bigr)
  \,\ge\, \frac{K_1}{M^2}\,\frac{1}{\alpha+1}\Bigl(t^{\alpha+1}
  - t_0^{\alpha+1}\Bigr) \,\xrightarrow[\,t_0 \to 0\,]{}\,
  \frac{K_1}{M^2}\,\frac{t^{\alpha+1}}{\alpha+1}~.
\]
We conclude that
\[
  \|\omega_\theta(t)\|_{L^2(\Omega)}^2 \,=\, f(t) \,=\, t^\alpha g(t)
  \,\le\, \frac{\alpha+1}{K_1}\,\frac{M^2}{t} \,=\,
  \frac{K_2M+1}{K_1}\,\frac{M^2}{t}~, \qquad 0 < t \le T~,
\]
which proves \eqref{Lpest} for $p = 2$ (hence for $1 \le p \le 2$
by interpolation). To reach the same conclusion for higher values
of $p$, we proceed exactly as in the proof of Proposition~\ref{propex1}.
Using the notations \eqref{MNdef}, we know from Proposition~\ref{Sprop}
that $M_p(T) \le CM$ for any $p \in [1,\infty]$, and from the
argument above that $N_p(T) \le C(M)$ for $p \in [1,2]$. The
relation \eqref{MNrec} then shows that $N_p(T) \le C(M)$ for all
$p > 2$, and a second iteration gives the desired result for
$p = \infty$ too.
\end{proof}

\begin{rem}\label{globrem}
Proposition~\ref{Lpprop} shows in particular that the $L^p$ norms of
the vorticity $\omega_\theta(t)$ cannot blow up in finite time. In
view of Remark~\ref{Texist}, this implies that all solutions
constructed in Sections~\ref{sec41} and \ref{sec42} are global for
positive times, and that the conclusions of Lemma~\ref{L1lem}
and Proposition~\ref{Lpprop} hold for all $t > 0$.
\end{rem}

With Proposition~\ref{Lpprop} at hand, it is straightforward to show
that the solutions of the vorticity equation \eqref{omeq} are smooth
for positive times, and that estimates similar to \eqref{Lpest} hold
for the derivatives too. For later use, we state the following result.

\begin{prop}\label{Lpderprop}
Under the assumptions of Proposition~\ref{Lpprop}, we have
for all $p \in [1,\infty]$\:
\begin{equation}\label{Lpderest}
  \|\nabla\omega_\theta(t)\|_{L^p(\Omega)} \,\le\, 
  \frac{C_p(\|\omega_0\|_{L^1(\Omega)})}{t^{\frac32-\frac1p}}~, \qquad t > 0~,
\end{equation}
where $C_p(s) = \cO(s)$ as $s \to 0$.
\end{prop}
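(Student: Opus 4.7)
The plan is to apply the Duhamel formula on the interval $[t/2,t]$, where $\omega_\theta(t/2)$ is already controlled in every $L^p(\Omega)$ by Proposition~\ref{Lpprop}, and to close the estimate using the semigroup bounds of Section~\ref{sec3} augmented with a gradient variant, together with the Biot--Savart estimates of Section~\ref{sec2}. An alternative route, which would also work, is to view $\omega = \omega_\theta e_\theta$ as a vector field on $\R^3$ satisfying the 3D vorticity equation and to invoke the classical parabolic regularity alluded to in Remark~\ref{smooth}.

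The first step is to extend Proposition~\ref{Sprop} by the gradient semigroup bound
\[
  \|\nabla S(\tau) f\|_{L^p(\Omega)} \,\le\, C\,\tau^{-\frac12-(\frac1q-\frac1p)}\,\|f\|_{L^q(\Omega)}, \qquad 1 \le q \le p \le \infty,
\]
obtained by differentiating the kernel of Lemma~\ref{Sformula} in $r$ and $z$ and checking, exactly as in the argument leading to \eqref{point1} and using Remark~\ref{Hbounds}, that the resulting kernel is pointwise dominated by the same 2D heat-kernel-like Gaussian with one extra factor $\tau^{-1/2}$; Young's inequality then finishes the argument. Applying $\nabla$ to
\[
  \omega_\theta(t) \,=\, S(t/2)\omega_\theta(t/2) - \int_{t/2}^t S(t-s)\div_*(u(s)\omega_\theta(s))\dd s,
\]
the linear contribution is immediately controlled by $C_p(M)\,t^{-(3/2-1/p)}$ using the new gradient bound with $q=p$ and Proposition~\ref{Lpprop} at time $t/2$, where $M:=\|\omega_0\|_{L^1(\Omega)}$.

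For the nonlinear term, the naive estimate $\|\nabla S(\tau)\div_*\|_{L^p\to L^p}\le C\tau^{-1}$ produces a non-integrable singularity at $s=t$, so I would keep $\div_*$ outside the semigroup and estimate the full divergence directly. Using the identity $\div_* u = -u_r/r$, which follows from the 3D incompressibility condition $\partial_r u_r + u_r/r + \partial_z u_z = 0$, one has
\[
  \|\div_*(u\omega_\theta)(s)\|_{L^p(\Omega)} \,\le\, \|u_r/r\|_{L^\infty}\,\|\omega_\theta\|_{L^p} + \|u\|_{L^\infty}\,\|\nabla\omega_\theta\|_{L^p}.
\]
Combining Proposition~\ref{BSprop}(ii) (with $p=1$, $q=\infty$) with Proposition~\ref{Lpprop} gives $\|u(s)\|_{L^\infty}\le C(M)s^{-1/2}$, and combining Proposition~\ref{BSprop3} with Lemma~\ref{Lplem} (for $p=\infty$) and Proposition~\ref{Lpprop} gives $\|u_r/r\|_{L^\infty}(s)\le C(M)s^{-1}$, with $C(M)=\cO(M)$ as $M\to 0$. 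Plugging back into Duhamel and using the integrable bound $\|\nabla S(\tau)\|_{L^p\to L^p}\le C\tau^{-1/2}$ reduces the problem to a Gronwall-type inequality
\[
  \Phi(t) \,\le\, C_p(M) + C(M)\,\sup_{s\in[t/2,t]}\Phi(s), \qquad \Phi(t) \,:=\, t^{3/2-1/p}\,\|\nabla\omega_\theta(t)\|_{L^p(\Omega)}.
\]

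The main obstacle is the closure of this Gronwall inequality when $M$ is large. For small $M$ the coefficient $C(M)$ is less than one and the bound $\Phi(t)=\cO(M)$ is immediate, which simultaneously yields the required $C_p(M)=\cO(M)$ behavior as $M\to 0$. For large $M$, the simple one-step closure fails, and I would bootstrap by subdividing $[t/2,t]$ into short subintervals on which the effective coefficient of $\sup\Phi$ is less than one, using Proposition~\ref{Lpprop} to re-initialize at the left endpoint of each subinterval and then propagating the bound by induction. This parabolic bootstrap is precisely the ``standard smoothing property of the Navier--Stokes equations'' invoked in Remark~\ref{smooth}.
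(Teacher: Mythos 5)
Your route is genuinely different from the paper's. The paper does not use the Duhamel formula here at all: it quotes the $L^\infty$ smoothing estimate for the three-dimensional Navier--Stokes equations from \cite{KNSS} (this is the ``classical parabolic regularity'' of Remark~\ref{smooth}), namely $(t-t_0)\|\nabla^2 u(t)\|_{L^\infty}\le A\|u(t_0)\|_{L^\infty}$ on a window of length $a\|u(t_0)\|_{L^\infty}^{-2}$, feeds in the bound $\|u(t_0)\|_{L^\infty}\le C(M)t_0^{-1/2}$ coming from \eqref{BSest2} and \eqref{Lpest}, chooses $t_0$ proportional to $t$, and converts the resulting bound $\|\nabla^2 u(t)\|_{L^\infty}\le \tilde C(M)t^{-3/2}$ into the $p=\infty$ case of \eqref{Lpderest} through the pointwise inequality $|\nabla\omega_\theta|\le C\bigl(|\nabla^2 u|+r^{-1}|\partial_r u_z|\bigr)$ and the vanishing of $\partial_r u_z$ at $r=0$; the remaining exponents follow by interpolation. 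This is essentially your ``alternative route''. What the paper buys is a complete bypass of any gradient estimate for $S(t)$ and of the Gronwall closure, at the price of importing an external theorem and of the extra step relating $\nabla\omega_\theta$ to $\nabla^2 u$ near the axis. Your main route stays entirely inside the paper's toolbox; the gradient semigroup bound you posit is correct, but note that differentiating $r^{-1/2}$ in \eqref{Sdef} produces a term carrying an extra factor $1/r$ whose control in the regime $\bar r>2r$ genuinely requires the endpoint bound $\tau^{3/2}H(\tau)\le C$ from Remark~\ref{Hbounds}, so that $\bar r^2\le 4(r-\bar r)^2$ can be absorbed into the Gaussian. Your splitting $\div_*(u\,\omega_\theta)=-(u_r/r)\,\omega_\theta+u\cdot\nabla\omega_\theta$, with $\|u_r/r\|_{L^\infty}\le C(M)s^{-1}$ and $\|u\|_{L^\infty}\le C(M)s^{-1/2}$, is exactly what \eqref{BSest5}, \eqref{Lp2}, \eqref{BSest2} and \eqref{Lpest} deliver.

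The one place where your argument is not yet a proof is the closure for large $M$. After shrinking the Duhamel interval you obtain $\Phi(t)\le A+\theta\sup_{s\in[(1-\delta)t,\,t]}\Phi(s)$ with $\theta<1$, but this only yields $\Phi\le A/(1-\theta)$ if you already know that $\sup_{0<s\le t}\Phi(s)<\infty$; near $s=0$ that finiteness is precisely the statement being proved, so ``propagating the bound by induction over subintervals'' is circular as stated. The standard repair is to recast the local step as a fixed-point argument on $[t_0,t_0+\tau]$ with $\tau\sim\delta(M)\,t_0$, in a norm augmented by $\sup_s\,(s-t_0)^{1/2}\|\nabla\cdot\|_{L^p}$, which produces the gradient bound together with the solution and identifies it with yours by the uniqueness already established (alternatively, mollify the data and pass to the limit). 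This is routine, but it must be said. With that addition, and with the $C_p(M)=\cO(M)$ behavior for small $M$ coming out of the contraction as you note, your plan does yield the proposition.
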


\begin{proof}
It is possible to prove estimate \eqref{Lpderest} by working
directly on the integral representation \eqref{omint}, but we find
it easier to deduce it from Proposition~\ref{Lpprop} using general
smoothing properties of the Navier-Stokes equations. We know from
\eqref{BSest2} and \eqref{Lpest} that the velocity field associated
with the solution $\omega_\theta$ of \eqref{omeq2} satisfies, for
any $t_0 > 0$,
\begin{equation}\label{Lpder1}
  \|u(t_0)\|_{L^\infty(\Omega)} \,\le\, C \|\omega_\theta(t_0)\|_{L^1
  (\Omega)}^{1/2}\,\|\omega_\theta(t_0)\|_{L^\infty(\Omega)}^{1/2} 
  \,\le\, \frac{C(M)}{\sqrt{t_0}}~,
\end{equation}
where $M = \|\omega_0\|_{L^1(\Omega)}$ and $C(s) = \cO(s)$ as $s \to 0$.
On the other hand, there exist positive constants $a$ and $A$ 
such that the solution $u(t)$ of the Navier-Stokes equations 
\eqref{NS3D} in $\R^3$ with data $u(t_0) \in L^\infty(\R^3)$ at 
time $t_0$ satisfies
\begin{equation}\label{Lpder2}
  (t-t_0)\|\nabla^2 u(t)\|_{L^\infty(\R^3)} \,\le\, A 
  \|u(t_0)\|_{L^\infty(\R^3)}~, \qquad t_0 < t < t_0 + a 
  \|u(t_0)\|_{L^\infty(\R^3)}^{-2}~,
\end{equation}
see \cite[Proposition~4.1]{KNSS}. Here $\nabla^2 u$ denotes the 
collection of all second-order derivatives of $u$. Since
$\|u(t_0)\|_{L^\infty(\Omega)} \equiv \|u(t_0)\|_{L^\infty(\R^3)}$, we
can combine estimates \eqref{Lpder1}, \eqref{Lpder2} by fixing 
$t > 0$ and choosing, for instance, 
\[
  t_0 \,=\, \frac{t}{2}\,\frac{2C(M)^2 + a}{C(M)^2+a}~,
\]
so that $t_0 < t < t_0 + a t_0 C(M)^{-2} \le t_0 + a \|u(t_0)\|_{L^\infty
(\R^3)}^{-2}$. We thus obtain
\begin{equation}\label{Lpder3}
  \|\nabla^2 u(t)\|_{L^\infty(\R^3)} \,\le\, \frac{A}{t-t_0}\,
  \frac{C(M)}{\sqrt{t_0}} \,=\, \frac{\tilde C(M)}{t^{3/2}}~,
\end{equation}
where $\tilde C(s) = \cO(s)$ as $s \to 0$. Using the pointwise 
estimate
\[
  |\nabla \omega_\theta| \,\le\, |\partial_r \omega_\theta| + 
  |\partial_z \omega_\theta| \,\le\, 
  C\Bigl(|\nabla^2 u| + \frac{1}{r}\,|\partial_r u_z|\Bigr)~,
\]
and the fact that $\partial_r u_z$ vanishes at $r = 0$, we see that 
\eqref{Lpder3} implies \eqref{Lpderest} with $p = \infty$.
The case $p < \infty$ easily follows by interpolation, in 
view of \eqref{Lpest}.  
\end{proof}

\section{Long-time behavior}\label{sec6}

In this final section, we study the long-time behavior of the
solutions of the axisymmetric vorticity equation \eqref{omeq}
constructed in Sections~\ref{sec4} and \ref{sec5}. In particular we
prove estimate \eqref{asym2}, and we obtain the asymptotic formula
\eqref{Mnon-V} in the particular case where the initial vorticity has a
definite sign and a finite impulse in the sense of \eqref{cM-def}. 
This will conclude the proof of Theorems~\ref{main1} and 
\ref{main2}.

\subsection{Convergence to zero in scale invariant
norms}\label{sec52}

Let $\omega_\theta \in C^0([0,\infty),L^1(\Omega)) \cap
C^0((0,\infty),L^\infty(\Omega))$ be a global solution of the
vorticity equation \eqref{omeq2}, with initial data $\omega_0 \in
L^1(\Omega)$. We know from Lemma~\ref{L1lem} that the $L^1$ norm
$\|\omega_\theta(t)\|_{L^1(\Omega)}$ is a decreasing function of
time, and our goal here is to prove that this quantity actually
converges to zero as $t \to \infty$. We first show that
$\omega_\theta(r,z,t)$ is essentially confined, for large times,
in a ball of radius $\cO(\sqrt{t})$ in $\Omega$. 

\begin{prop}\label{propconfi}
Let $\omega_0 \in L^1(\Omega)$ and $M = \|\omega_0\|_{L^1}$.
For any $\epsilon > 0$, there exist positive constants
$K_3(\epsilon,\omega_0)$ and $K_4(\epsilon,M)$ such that
the solution of \eqref{omeq2} with initial data $\omega_0$
satisfies, for all $t \ge 0$,
\begin{equation}\label{confinest}
  \int_{\Omega(t)} |\omega_\theta(r,z,t)| \dd r \dd z \,\le\, \epsilon~,
  \quad \hbox{where}\quad \Omega(t) \,=\, \Bigl\{(r,z) \in \Omega
  \,\Big|\,\sqrt{r^2+z^2} \ge K_3 + K_4\sqrt{t}\Bigr\}~.
\end{equation}
\end{prop}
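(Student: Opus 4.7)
The plan is to reduce to the case $\omega_\theta\ge 0$ via the decomposition of Lemma~\ref{L1lem}, and then to test Eq.~\eqref{omeq2} against a smooth spatial cut-off that grows parabolically in time. I would set $\rho=(r^2+z^2)^{1/2}$, $R(t)=K_3+K_4\sqrt{t}$, and $\Phi(r,z,t)=\psi(\rho/R(t))$, where $\psi\in C^\infty(\R;[0,1])$ is nondecreasing with $\psi(s)=0$ for $s\le 1$ and $\psi(s)=1$ for $s\ge 2$. Computing $\frac{d}{dt}\int_\Omega \Phi\,\omega_\theta\,dr\,dz$ by integrating by parts in \eqref{omeq2}, two cancellations specific to the axisymmetric equation play a decisive role. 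First, the boundary contributions at the axis $r=0$ coming from $\int\Phi(\partial_r^2+\partial_z^2)\omega_\theta$ and from $\int\Phi(1/r)\partial_r\omega_\theta$ combine into $-2\int_\R \Phi(0,z,t)\,\partial_r\omega_\theta(0,z,t)\,dz$, which is nonpositive by Hopf's lemma since $\omega_\theta\ge 0$. Second, the $\psi'(\rho/R)/\rho$ contribution produced by the radial Laplacian is exactly cancelled by the $-\omega_\theta/r^2$ dissipation, so that the net spatial operator acting on $\Phi$ reduces cleanly to $\Delta_{2D}\Phi-(\partial_r\Phi)/r=\psi''(\rho/R)/R^2$. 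On the support of $\psi'$ one has $\rho\ge R$, hence $-\partial_t\Phi\ge \psi'(\rho/R)R'/R$ dominates the drift contribution $\psi'(\rho/R)\|u\|_{L^\infty}/R$ as soon as $K_4\ge 4C(M)$, where $C(M)$ is the constant in $\|u(t)\|_{L^\infty}\le C(M)t^{-1/2}$ (obtained by combining Propositions~\ref{BSprop3} and~\ref{Lpprop}). One is left with
\[
  \frac{d}{dt}\int_\Omega \Phi\,\omega_\theta\,dr\,dz \,\le\, \frac{1}{R(t)^2}\int_\Omega \psi''(\rho/R(t))\,\omega_\theta(r,z,t)\,dr\,dz.
\]

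The main obstacle is to control this residual error uniformly in $t$, since $\psi''$ changes sign and a crude pointwise bound leads only to a logarithmic growth. To bypass it, I would run a parallel computation with the polynomial test function $\Psi(r,z,t)=(\rho-R(t))_+^2$, for which the same manipulations produce the much cleaner identity $\Delta_{2D}\Psi-(\partial_r\Psi)/r=2\cdot\mathbf{1}_{\rho>R}$. Absorbing the drift in the same manner yields $\frac{d}{dt}\int(\rho-R(t))_+^2\,\omega_\theta\le 2M$, hence a linear-in-$t$ bound on the truncated second moment, from which Chebyshev's inequality gives $\int_{\rho>R(t)+K\sqrt{t}}\omega_\theta\le 2M/K^2$. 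Enlarging $K_4$ by a factor depending only on $\epsilon$ and $M$ then produces the bound $\epsilon/2$ on the excess mass, provided the initial quantity $\int(\rho-K_3)_+^2\omega_0\,dr\,dz$ is under control.

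Since this initial moment need not be finite for a general $\omega_0\in L^1(\Omega)$, I would close the argument by approximation. Using the tightness of $\omega_0$, choose $K_3=K_3(\epsilon,\omega_0)$ so that $\int_{\rho>K_3/2}|\omega_0|<\epsilon/4$, and replace $\omega_0$ by a smooth truncation $\omega_0^{\rm tr}$ supported in $\{\rho\le K_3\}$ with $\|\omega_0-\omega_0^{\rm tr}\|_{L^1}<\epsilon/4$. The moment-based argument applies directly to the solution $\omega_\theta^{\rm tr}$, and the continuous dependence of the nonlinear solution on its initial data in $L^1(\Omega)$---valid on arbitrary bounded time intervals thanks to the global a priori bounds of Section~\ref{sec5} and Remark~\ref{Texist}(2)---allows one to transfer the confinement from $\omega_\theta^{\rm tr}$ back to $\omega_\theta$, possibly after enlarging $K_3$ further. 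The short-time regime is covered by combining the continuity of $t\mapsto\omega_\theta(t)$ in $L^1(\Omega)$ (Proposition~\ref{propex1}) with the tightness of $\omega_0$, so that a fixed enlargement of $K_3$ controls the tail for $t\in[0,T_0]$ with $T_0$ chosen to patch the two regimes together.
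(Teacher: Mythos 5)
Your test-function computation is essentially correct as algebra: the adjoint operator acting on a weight $\Phi$ is indeed $\partial_r^2\Phi+\partial_z^2\Phi-\tfrac1r\partial_r\Phi$, this annihilates the first-order part for radial functions of $\rho=(r^2+z^2)^{1/2}$, the axis contribution is $-2\int_\R\Phi(0,z,t)\,\partial_r\omega_\theta(0,z,t)\dd z\le 0$ by Hopf's lemma, and the drift is absorbed by $R'(t)\ge\|u(t)\|_{L^\infty}$ once $K_4\ge 2C(M)$, using $\|u(t)\|_{L^\infty}\le C(M)t^{-1/2}$ (which comes from \eqref{BSest2} with $p=1$, $q=\infty$ combined with Proposition~\ref{Lpprop}, rather than from Proposition~\ref{BSprop3}). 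The resulting inequality $\frac{\D}{\D t}\int_\Omega(\rho-R(t))_+^2\,\omega_\theta\dd r\dd z\le 2M$ is a genuinely different route from the paper, which instead works with the one-dimensional marginals $f(R,t)=\int_R^\infty\!\int_\R\omega_\theta\dd z\dd r$ and $h_\pm(Z,t)$, derives for them scalar parabolic inequalities with drift $C(M)/\sqrt t$, and compares with translated solutions of the heat equation on the line; that argument needs no moments and applies to arbitrary $L^1$ data without approximating the initial datum.

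The gap is in your transfer step. The continuous dependence of Remark~\ref{Texist}(2), even iterated using the global bounds of Section~\ref{sec5}, gives $\|\omega_\theta(t)-\omega_\theta^{\rm tr}(t)\|_{L^1(\Omega)}\le C(T)\|\omega_0-\omega_0^{\rm tr}\|_{L^1(\Omega)}$ only on bounded intervals $[0,T]$, with a constant that is not uniform in $T$; since \eqref{confinest} must hold for all $t\ge0$, no single truncation error works for every $T$ simultaneously, and the confinement of $\omega_\theta^{\rm tr}$ does not pass to $\omega_\theta$ for large times. The correct repair is the device of Lemma~\ref{L1lem}: split $\omega_0=\omega_0^{\rm tr}+\omega_0^{\rm err}$ into nonnegative pieces and evolve both by the \emph{linear} equation \eqref{ompm} with the common velocity field $u$ of the full solution, so that $\omega_\theta=\omega^{\rm tr}+\omega^{\rm err}$ exactly, $\|\omega^{\rm err}(t)\|_{L^1(\Omega)}\le\|\omega_0^{\rm err}\|_{L^1(\Omega)}<\epsilon/4$ for all $t\ge0$, and your moment argument applies verbatim to $\omega^{\rm tr}$, since it uses only positivity, the Dirichlet condition at $r=0$, and the bound on $\|u(t)\|_{L^\infty}$. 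Two secondary points still require care even after this fix: you must check that the truncated second moment of $\omega^{\rm tr}$ stays finite for $t>0$, so that the integrations by parts against the unbounded weight $(\rho-R(t))_+^2$ (including the axis term weighted by $(|z|-R(t))_+^2$) are legitimate; and the Chebyshev step yields a $K_4$ depending only on $\epsilon$ and $M$ precisely because the truncation is supported in $\{\rho\le K_3\}$, making the initial moment vanish, not merely small.
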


\begin{proof}
By the maximum principle, it is sufficient to establish
\eqref{confinest} for positive solutions of \eqref{omeq2} 
(in the general case, the result follows by decomposing 
$\omega_\theta$ as in the proof of Lemma~\ref{L1lem}). We thus 
assume that $\omega_0 \ge 0$ and that $M = \int_\Omega \omega_0
\dd r\dd z > 0$. The only property of the velocity field 
that will be used to obtain \eqref{confinest} is the a priori 
estimate \eqref{Lpder1}. 

We first prove confinement in the radial direction. For $R \ge 0$ and
$t \ge 0$, we define
\[
  f(R,t) \,=\, \int_R^\infty \left\{\int_\R \omega_\theta(r,z,t)
  \dd z \right\}\dd r~.
\]
Then $f(R,t)$ is a non-increasing function of $R$ such that $f(0,t) =
\|\omega_\theta(t)\|_{L^1(\Omega)}$ and $f(R,t) \to 0$ as $R \to \infty$.
Moreover, using \eqref{omeq2}, it is easy to verify that $f$ satisfies
the evolution equation
\begin{equation}\label{evolf}
  \partial_t f(R,t) \,=\, \partial_R^2 f(R,t) + \frac1R\,
  \partial_R f(R,t) + \int_\R u_r(R,z,t) \omega_\theta(R,z,t)\dd z~,
  \qquad R > 0~.
\end{equation}
In view of \eqref{Lpder1}, we have the estimate
\begin{equation}\label{evolf1}
  \int_\R u_r(R,z,t) \omega_\theta(R,z,t)\dd z \,\le\,
  \frac{C(M)}{\sqrt{t}}\int_\R \omega_\theta(R,z,t)\dd z
  \,=\, -\frac{C(M)}{\sqrt{t}}\,\partial_Rf(R,t)~,
\end{equation}
for some positive constant $C(M)$. Since $\partial_R f \le 0$, we 
deduce from \eqref{evolf}, \eqref{evolf1} that
\begin{equation}\label{evolf2}
 \partial_t f(R,t) \,\le\, \partial_R^2 f(R,t) -\frac{C(M)}{\sqrt{t}}
 \,\partial_Rf(R,t)~, \qquad R > 0~.
\end{equation}
Solving the differential inequality \eqref{evolf2}, with homogeneous
Neumann boundary condition at $R = 0$, is a straightforward task. For
instance, if we extend $f(\cdot,t)$ to the whole real line by setting
$\overline{f}(R,t) = f(0,t)$ for $R \le 0$, the extension satisfies
inequality \eqref{evolf2} for all $R \in \R$. We deduce that
\[
  f(R,t) \,\le\, g(R-2C(M)\sqrt{t},t)~, \qquad R \ge 0~, \quad
  t \ge 0~,
\]
where $g$ is the solution of the heat equation $\partial_t g=
\partial_R^2 g$ on $\R$ with initial data $g(R,0) = \overline{f}(R,0)$.
Given any $\epsilon > 0$, we choose $R_0 > 0$ large enough so that
$f(R_0,0) \le \epsilon$. If $R > R_0$ we estimate
\begin{align*}
  g(R,t) \,&=\, \frac{1}{\sqrt{4\pi t}}\int_{-\infty}^{R_0}
  e^{-\frac{(R-r)^2}{4t}} \,\overline{f}(r,0)\dd r +
  \frac{1}{\sqrt{4\pi t}}\int_{R_0}^\infty  e^{-\frac{(R-r)^2}{4t}}f(r,0)\dd r \\
  \,&\le\,  \frac{e^{-\frac{(R-R_0)^2}{4t}}}{\sqrt{4\pi t}} \int_{-\infty}^{R_0}
  e^{-\frac{(R_0-r)^2}{4t}} M\dd r + \frac{\epsilon}{\sqrt{4\pi t}}
  \int_{R_0}^\infty  e^{-\frac{(R-r)^2}{4t}}\dd r \\
  \,&\le\, M \,e^{-\frac{(R-R_0)^2}{4t}} + \epsilon~.
\end{align*}
The right-hand side is smaller than $2\epsilon$ if $R \ge R_0 +
2\sqrt{t}\log(M/\epsilon)^{1/2}$. Summarizing, we have shown that
$f(R,t) \le 2\epsilon$ provided $R \ge R_0 + K\sqrt{t}$, with
$K = 2C(M) + 2 \log(M/\epsilon)^{1/2}$.

The argument is similar for the confinement in the vertical
direction. For $Z \in \R$ and $t \ge 0$, we define
\[
  h_+(Z,t) \,=\, \int_Z^\infty \left\{\int_0^\infty \omega_\theta(r,z,t)
  \dd r \right\}\dd z~, \qquad
  h_-(Z,t) \,=\, \int_{-\infty}^Z \left\{\int_0^\infty \omega_\theta(r,z,t)
  \dd r \right\}\dd z~.
\]
Then $Z \mapsto h_+(Z,t)$ is non-increasing, $Z \mapsto h_-(Z,t)$
is non-decreasing, and we have the differential inequalities
\[
  \partial_t h_\pm(Z,t) \,\le\, \partial_Z^2 h_\pm(Z,t)
  \mp \frac{C(M)}{\sqrt{t}}\,\partial_Z h_\pm(Z,t)~,
\]
which allow us to compare $h_\pm(Z,t)$ with suitably translated
solutions of the one-dimensional heat equation. Proceeding as
above, we find that, for any $\epsilon > 0$, there exist positive
constants $Z_0$ and $K$ such that $h_+(Z,t) \le 2\epsilon$ if
$Z \ge Z_0 + K\sqrt{t}$, and $h_-(Z,t) \le 2\epsilon$ if $Z \le
-Z_0 - K\sqrt{t}$. It follows that
\[
  \int_0^\infty \int_{|z| \ge {Z_0+K\sqrt{t}}} \,\omega_\theta(r,z,t)
  \dd z \dd z \,\le\, 4\epsilon~, \qquad t \ge 0~.
\]
Combining this result with the previous estimate on $f(R,t)$,
we obtain \eqref{confinest}.
\end{proof}

\begin{prop}\label{L1zero}
For any initial data $\omega_0 \in L^1(\Omega)$, the solution
of the axisymmetric vorticity equation \eqref{omeq2} satisfies 
$\|\omega_\theta(t)\|_{L^1(\Omega)} \to 0$ as $t \to \infty$.
\end{prop}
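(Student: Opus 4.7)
By Lemma~\ref{L1lem}, the function $t\mapsto \|\omega_\theta(t)\|_{L^1(\Omega)}$ is non-increasing, hence the limit
\[
  m \,:=\, \lim_{t\to\infty}\|\omega_\theta(t)\|_{L^1(\Omega)}
\]
exists in $[0,\|\omega_0\|_{L^1(\Omega)}]$. I argue by contradiction: assuming $m>0$, I extract via self-similar rescaling a nontrivial global solution of \eqref{omeq2} whose $L^1$ norm is simultaneously constant in time and, by Lemma~\ref{L1lem}, strictly decreasing.

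The scaling symmetry of the axisymmetric vorticity equation shows that, for every $n\in\N$, the rescaled function
\[
  \omega^{(n)}(r,z,t) \,:=\, n^2\,\omega_\theta(n r,\, n z,\, n^2 t)
\]
is again a solution of \eqref{omeq2} on $(0,\infty)$, with associated velocity $u^{(n)}(r,z,t) = n\,u(nr,nz,n^2 t)$. By construction, $\|\omega^{(n)}(t)\|_{L^1(\Omega)} = \|\omega_\theta(n^2 t)\|_{L^1(\Omega)} \to m$ as $n\to\infty$ for every fixed $t>0$. The scale-invariant bounds of Propositions~\ref{Lpprop} and~\ref{Lpderprop} yield, uniformly in $n$,
\[
  \|\omega^{(n)}(t)\|_{L^p(\Omega)} + t^{1/2}\|\nabla\omega^{(n)}(t)\|_{L^p(\Omega)} \,\le\, \frac{C_p(M)}{t^{1-\frac1p}}, \qquad 1\le p\le\infty,
\]
with $M = \|\omega_0\|_{L^1(\Omega)}$, while rescaling the confinement estimate of Proposition~\ref{propconfi} applied at time $n^2 t$ gives, for every $\epsilon>0$ and all $n$,
\[
  \int_{\{\sqrt{r^2+z^2}\,\ge\, K_3/n + K_4\sqrt{t}\}} |\omega^{(n)}(r,z,t)|\dd r\dd z \,\le\, \epsilon,
\]
so that $\{\omega^{(n)}(\cdot,t)\}_{n\in\N}$ is tight in $L^1(\Omega)$ for each $t>0$.

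Combining the gradient bound with a bound on $\partial_t\omega^{(n)}$ read off from the equation, $\{\omega^{(n)}\}$ is equicontinuous on every compact subset of $\overline\Omega\times(0,\infty)$. An Arzel\`a--Ascoli and diagonal extraction argument produces a subsequence converging locally uniformly to a limit $\omega^\infty\in C^0(\Omega\times(0,\infty))$, and the Biot-Savart bounds of Section~\ref{sec2} together with tightness ensure that $u^{(n)}\to u^\infty$ locally uniformly, where $u^\infty$ is obtained from $\omega^\infty$ via the axisymmetric Biot-Savart law. Passing to the limit in the integral equation \eqref{omint2} proves that $\omega^\infty$ is a mild solution of \eqref{omeq2} on $[t_0,\infty)$ for every $t_0>0$, starting from $\omega^\infty(t_0)\in L^1(\Omega)$. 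Crucially, the tightness above prevents any loss of $L^1$ mass at infinity, so $\|\omega^\infty(t)\|_{L^1(\Omega)} = m$ for every $t>0$.

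Now Lemma~\ref{L1lem} applied to $\omega^\infty$ on $[t_0,\infty)$ forces $t\mapsto\|\omega^\infty(t)\|_{L^1(\Omega)}$ to be strictly decreasing whenever $\omega^\infty(t_0)\not\equiv 0$, contradicting its constancy. Hence $\omega^\infty\equiv 0$, which contradicts $\|\omega^\infty(t)\|_{L^1(\Omega)} = m>0$. Therefore $m=0$. The main technical obstacle is the compactness/passage-to-limit step: one needs uniform regularity of the $\omega^{(n)}$ up to the boundary $r=0$ so that the homogeneous Dirichlet condition survives in $\omega^\infty$, and one needs to combine local uniform convergence with the tightness from Proposition~\ref{propconfi} both to identify $u^\infty$ via the Biot-Savart law in the limit and to rule out the escape of $L^1$ mass to infinity when evaluating $\|\omega^\infty(t)\|_{L^1(\Omega)}$.
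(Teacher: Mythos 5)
Your overall strategy coincides with the paper's: both arguments rescale the solution self-similarly, extract a compact limit, and derive a contradiction between the constancy of the limiting $L^1$ norm and the strict decay from Lemma~\ref{L1lem}. The difference lies entirely in how the compactness and limit-identification step is carried out, and here the paper's route is markedly lighter. The paper only establishes relative compactness in $L^1(\Omega)$ of the \emph{single} time slice $w_n(0)$, combining the tightness from Proposition~\ref{propconfi} with the gradient bound of Proposition~\ref{Lpderprop} and the Riesz (Kolmogorov) compactness criterion; it then identifies the limit at any later time $t>0$ simply by invoking the continuous dependence of the solution map $\Sigma(t)$ on $L^1$ initial data (Remark~\ref{Texist}.2), writing $w_n(t)=\Sigma(t)w_n(0)\to\Sigma(t)\overline{w}$. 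This completely avoids the machinery you propose: Arzel\`a--Ascoli in space-time (which would require a bound on $\partial_t\omega^{(n)}$, i.e.\ on second spatial derivatives, not provided by Proposition~\ref{Lpderprop}), direct passage to the limit in the nonlinear integral equation, convergence of the Biot--Savart velocities, and uniform regularity up to the boundary $r=0$ --- precisely the items you flag as ``the main technical obstacle'' and leave unresolved. Those steps are all plausibly completable (for instance, once the limit is known to be a mild solution with $L^1$ data, the uniqueness statement of Proposition~\ref{propex1} identifies it with the smooth fixed-point solution, so the boundary condition need not be tracked through the limit), but as written your proof defers the genuinely delicate work, whereas the paper's use of well-posedness as a black box disposes of it in one line. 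If you rewrite the compactness step to act on a single time slice and then propagate by continuous dependence, your argument becomes the paper's.
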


\begin{proof}
We know from Lemma~\ref{L1lem} that $\|\omega_\theta(t)\|_{L^1(\Omega)}$
converges to some limit $\ell$ as $t \to \infty$. To prove
that $\ell = 0$, we use a standard rescaling argument.
Let $(\lambda_n)_{n \in \N}$ be an increasing sequence of
positive real numbers such that $\lambda_n \to \infty$ as
$n \to \infty$. We define for all $n \in \N$\:
\[
  w_n(r,z,t) \,=\, \lambda_n^2\,\omega_\theta(\lambda_n r,\lambda_n z,
  \lambda_n^2(1+t))~, \qquad (r,z) \in \Omega~, \quad t \ge 0~,
\]
Denoting $w_n(t) = w_n(\cdot,\cdot,t)$, we claim that the sequence
$(w_n(0))_{n \in \N}$ is relatively compact in $L^1(\Omega)$. Indeed,
by Proposition~\ref{propconfi}, for all $\epsilon > 0$ there
exists a compact set $\Omega_0 \subset \Omega$ such that
\begin{equation}\label{Riesz1}
  \sup_{n \in \N} \int_{\Omega\setminus\Omega_0} |w_n(r,z,0)|
  \dd r\dd z \,\le\, \epsilon~.
\end{equation}
In addition, Proposition~\ref{Lpderprop} asserts that
\begin{equation}\label{Riesz2}
  \sup_{n \in \N} \|\nabla w_n(0)\|_{L^\infty(\Omega)} \,=\,
  \sup_{n \in \N} \lambda_n^3 \|\nabla \omega_\theta(\lambda_n^2)
  \|_{L^\infty(\Omega)} \,<\, \infty~.
\end{equation}
Combining \eqref{Riesz1}, \eqref{Riesz2} and using the Riesz criterion
\cite[Theorem~XIII.66]{RS}, we obtain the desired compactness. Thus,
after extracting a subsequence, we can assume that $w_n(0)$ converges
in $L^1(\Omega)$ to some limit $\overline{w}$, which satisfies
$\|\overline{w}\|_{L^1(\Omega)} = \ell$ because
$\|w_n(0)\|_{L^1(\Omega)} = \|\omega_\theta(\lambda_n^2)
\|_{L^1(\Omega)} \to \ell$ as $n \to \infty$.

Now, we fix $t > 0$, and repeating the procedure above we extract yet
another subsequence so that $w_n(t)$ converges in $L^1(\Omega)$ to
some limit $\overline{w}(t)$. By construction, for each $n \in \N$,
the function $w_n(r,z,t)$ solves the vorticity equation \eqref{omeq2}
with initial data $w_n(r,z,0)$.  As in the proof of
Proposition~\ref{propex1}, we thus denote $w_n(t) =
\Sigma(t)w_n(0)$. Taking the limit $n \to \infty$ and using the fact
that the solutions of \eqref{omeq2} depend continuously on the initial data
in $L^1(\Omega)$, see Remark~\ref{Texist}.2, we deduce that
$\overline{w}(t) \,=\, \Sigma(t)\overline{w}$. But since
$\|\overline{w}(t)\|_{L^1(\Omega)} = \|\overline{w}\|_{L^1(\Omega)} =
\ell$, we get a contradiction with the strict decay of the $L^1$
norm established in Lemma~\ref{L1lem}, unless $\ell = 0$.
\end{proof}

\begin{cor}\label{Lpzero}
Under the assumptions of Proposition~\ref{L1zero}, we
have
\begin{equation}\label{Lpasym}
  \lim_{t \to \infty} t^{1-\frac1p} \|\omega_\theta(t)\|_{L^p(\Omega)}
  \,=\, 0~, \qquad 1 \le p \le \infty~.
\end{equation}
\end{cor}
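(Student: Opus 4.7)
The plan is to derive \eqref{Lpasym} from Proposition \ref{L1zero} by combining it with the scale-invariant a priori estimate from Proposition \ref{Lpprop}, exploiting the semigroup structure of the equation.

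First, the case $p = 1$ is exactly the content of Proposition \ref{L1zero}, so nothing to do there. For $p \in (1,\infty]$, I would fix $t > 0$ and restart the equation from time $t/2$. By uniqueness of the global solution (combined with Remark \ref{Texist}.2 on continuous dependence), the function $s \mapsto \omega_\theta(t/2 + s)$ coincides on $[0,\infty)$ with the solution of the integral equation \eqref{omint} having initial data $\omega_\theta(t/2) \in L^1(\Omega)$. Consequently, Proposition \ref{Lpprop} applied to this shifted solution with time parameter $s = t/2$ gives
\[
  \|\omega_\theta(t)\|_{L^p(\Omega)} \,\le\, \frac{C_p\bigl(\|\omega_\theta(t/2)\|_{L^1(\Omega)}\bigr)}{(t/2)^{1-\frac1p}}~.
\]
Multiplying through by $t^{1-1/p}$ yields
\[
  t^{1-\frac1p} \|\omega_\theta(t)\|_{L^p(\Omega)} \,\le\, 2^{1-\frac1p}\,C_p\bigl(\|\omega_\theta(t/2)\|_{L^1(\Omega)}\bigr)~.
\]

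The conclusion then follows immediately: by Proposition \ref{L1zero}, $\|\omega_\theta(t/2)\|_{L^1(\Omega)} \to 0$ as $t \to \infty$, and by the assumption $C_p(s) = \cO(s)$ as $s \to 0$ recorded in Proposition \ref{Lpprop}, the right-hand side tends to zero. This establishes \eqref{Lpasym} for every $p \in (1,\infty]$.

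There is no real obstacle here; the entire argument reduces to a one-line semigroup restart. The only thing that needs to be invoked carefully is the uniqueness statement from Proposition \ref{propex1} (and Remark \ref{Texist}.2), which legitimizes treating $\omega_\theta(t/2)$ as a bona fide initial datum to which Proposition \ref{Lpprop} applies verbatim; since global existence has already been established in Remark \ref{globrem}, the estimate holds on the whole half-line $[t/2,\infty)$, not merely on a short local interval.
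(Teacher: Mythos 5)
Your argument is correct, but it is genuinely different from the one in the paper. You combine Proposition~\ref{L1zero} with the a priori estimate \eqref{Lpest} of Proposition~\ref{Lpprop} through a time shift: restarting at $t/2$ (legitimate, since the solution satisfies \eqref{omint2} and hence is the mild solution issued from $\omega_\theta(t/2)$, and the bound of Proposition~\ref{Lpprop} is uniform in $T$ by Remark~\ref{globrem}) gives $t^{1-\frac1p}\|\omega_\theta(t)\|_{L^p(\Omega)} \le 2^{1-\frac1p}\,C_p\bigl(\|\omega_\theta(t/2)\|_{L^1(\Omega)}\bigr) \to 0$. The one ingredient you lean on crucially is that $C_p(s)=\cO(s)$ as $s\to 0$, i.e.\ that the constant in \eqref{Lpest} degenerates with the $L^1$ norm of the data; this is indeed established in the proof of Proposition~\ref{Lpprop} (via the Nash argument for $p=2$ and the bootstrap \eqref{MNrec} for larger $p$), so the step is sound. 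The paper proceeds differently: it first proves the linear decay $t^{1-\frac1p}\|S(t)\omega_0\|_{L^p(\Omega)}\to 0$ by a density argument, reduces to small data using Proposition~\ref{L1zero}, and then runs a limsup bootstrap on the Duhamel formula \eqref{omint} in the scale-invariant space $X$, showing $\sigma_{4/3}\le C_3\sigma_{4/3}^2$ with $C_3\sigma_{4/3}<1/2$, hence $\sigma_{4/3}=0$, and iterating to all $p$. Your route is shorter and reuses the quantitative a priori estimate rather than the fixed-point machinery; the paper's route only needs smallness of the data to place the solution in the contraction ball and does not invoke the fine $\cO(s)$ dependence of $C_p$. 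Both are valid proofs of \eqref{Lpasym}.
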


\begin{proof}
We first observe that, for any $\omega_0 \in L^1(\Omega)$,
one has
\begin{equation}\label{Lplin}
  \lim_{t \to \infty} t^{1-\frac1p}\|S(t)\omega_0\|_{L^p(\Omega)}
  \,=\, 0~, \qquad 1 \le p \le \infty~,
\end{equation}
where $S(t)$ is the semigroup associated with the linear equation
\eqref{omlin}. Indeed, it is clear from \eqref{Sdef} that
\eqref{Lplin} holds if $\omega_0$ is continuous and compactly
supported in $\Omega$, and the general case follows by a density
argument, using estimate \eqref{Sest1}.

Next, in view of Proposition~\ref{L1zero}, it is sufficient
to prove \eqref{Lpasym} for small solutions. With the notations
of Section~\ref{sec41}, we thus assume that $C_2\|\omega_0\|_{L^1(\Omega)}
\le R$, where $R > 0$ satisfies $2C_3 R < 1$. Then the solution
$\omega_\theta$ of \eqref{omint} can be constructed by a global
fixed argument in the ball $B_R \subset X$, where the space $X$
is defined by the norm $\|\omega_\theta\|_X = \sup_{t > 0}t^{1/4}
\|\omega_\theta(t)\|_{L^{4/3}(\Omega)}$. For any $p \in [1,\infty]$,
we define
\[
  \sigma_p(t) \,=\, \sup_{s > t} s^{1-\frac1p} \|\omega_\theta(s)\|_{L^p(\Omega)}
  ~, \qquad \hbox{and}\quad \sigma_p \,=\, \lim_{t \to \infty}\sigma_p(t)~.
\]
Clearly $\sigma_{4/3}(t) \le \sigma_{4/3}(0) = \|\omega_\theta\|_X
\le R$ for all $t \ge 0$. In addition, using the integral equation 
\eqref{omint} and estimating the nonlinear term as in \eqref{cFest}
with $p = 4/3$, we find
\begin{align*}
  t^{1/4} \|\omega_\theta(t)\|_{L^{4/3}(\Omega)} \,&\le\,
  t^{1/4} \|S(t)\omega_0\|_{L^{4/3}(\Omega)} \\
  &\quad + t^{1/4} \int_0^{\sqrt{t}} \frac{C}{(t-s)^{3/4}}\,
  \frac{\|\omega_\theta\|_X^2}{s^{1/2}}\dd s + t^{1/4} \int_{\sqrt{t}}^t
  \frac{C}{(t-s)^{3/4}}\,\frac{\sigma_{4/3}(\sqrt{t})^2}{s^{1/2}}\dd s \\
  \,&\le\, t^{1/4} \|S(t)\omega_0\|_{L^{4/3}(\Omega)} + C_3\,t^{-1/4}
  \|\omega_\theta\|_X^2 + C_3\,\sigma_{4/3}(\sqrt{t})^2~, \qquad t > 0~.
\end{align*}
Taking the limsup as $t \to \infty$ and using \eqref{Lplin},
we see that $\sigma_{4/3} \,\le\, C_3\,\sigma_{4/3}^2$. Since we also
know that $C_3\,\sigma_{4/3} \le C_3 R < 1/2$, we conclude that
$\sigma_{4/3} = 0$. With this information at hand, we can return
to the integral equation \eqref{omint} and show, by the same
bootstrap argument as in the proof of Proposition~\ref{propex1},
that $\sigma_p = 0$ for all $p \in [1,\infty]$. This concludes the
proof of \eqref{Lpasym}.
\end{proof}

\subsection{Asympotic behavior of positive solutions
with finite impulse}\label{sec53}

We now consider the particular situation where the initial vorticity
$\omega_0 \in L^1(\Omega)$ is non-negative and has a finite impulse. 
We denote
\begin{equation}\label{Mdef}
  M \,=\, \int_{\Omega} \omega_0(r,z)\dd r\dd z~, \qquad
  \cI \,=\, \int_{\Omega} r^2 \omega_0(r,z)\dd r\dd z~.
\end{equation}
As is well-known, the impulse $\cI$ is conserved for solutions
of \eqref{omeq2}.

\begin{lem}\label{Mconser}
For any non-negative solution of \eqref{omeq2} in $L^1(\Omega)$ with
finite impulse, we have
\[
  \int_{\Omega} r^2 \omega_\theta(r,z,t)\dd r\dd z \,=\,
  \int_{\Omega} r^2 \omega_0(r,z)\dd r\dd z~, \qquad t \ge 0~.
\]
\end{lem}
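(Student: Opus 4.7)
The plan is to prove $\frac{d}{dt}\int_\Omega r^2 \omega_\theta\, dr\, dz = 0$ by multiplying the vorticity equation \eqref{omeq2} by $r^2$ and integrating. Since the solutions are smooth for $t > 0$ by Proposition~\ref{Lpderprop} and Remark~\ref{smooth}, we can work with the classical PDE. Because the weight $r^2$ is not a priori absolutely integrable against $\omega_\theta$, introduce a smooth cutoff $\chi_R(r,z) = \chi(\sqrt{r^2+z^2}/R)$ with $\chi \in C_c^\infty([0,\infty))$, $\chi \equiv 1$ near $0$, and study $I_R(t) = \int_\Omega r^2 \chi_R\, \omega_\theta\, dr\, dz$. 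The goal is to show $\frac{d}{dt}I_R \to 0$ as $R \to \infty$, uniformly on compact time intervals.

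For the diffusion contribution, integration by parts (all boundary terms at $r = 0$ vanishing because $\omega_\theta$ and $\partial_r\omega_\theta \cdot r^2$ vanish there, and boundary terms at infinity being absorbed into cutoff errors) gives
\begin{equation*}
  \int_\Omega r^2 \Bigl(\partial_r^2 + \partial_z^2 + \tfrac{1}{r}\partial_r - \tfrac{1}{r^2}\Bigr)\omega_\theta \,dr\,dz \,=\, \bigl(2 - 0 - 1 - 1\bigr)\int_\Omega \omega_\theta \,dr\,dz \,=\, 0~,
\end{equation*}
where the individual contributions come from $\int r^2 \partial_r^2\omega_\theta = 2\int \omega_\theta$, $\int r^2\partial_z^2 \omega_\theta = 0$, $\int r \partial_r\omega_\theta = -\int\omega_\theta$, and $-\int \omega_\theta$. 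For the convective contribution, one IBP yields
\begin{equation*}
  -\int_\Omega r^2 \div_*(u\,\omega_\theta)\,dr\,dz \,=\, \int_\Omega 2r\, u_r\, \omega_\theta\,dr\,dz + (\hbox{cutoff errors})~,
\end{equation*}
and the integral on the right vanishes by a short calculation: substituting $\omega_\theta = \partial_z u_r - \partial_r u_z$, the piece $\int 2r u_r \partial_z u_r = \int r\,\partial_z(u_r^2)$ vanishes by decay in $z$, and the remaining piece becomes, after integration by parts in $r$ and use of the three-dimensional divergence-free relation $\partial_r u_r + u_r/r + \partial_z u_z = 0$,
\begin{equation*}
  -\int 2r u_r\,\partial_r u_z\,dr\,dz \,=\, \int (2u_r + 2r\,\partial_r u_r)\,u_z\,dr\,dz \,=\, -\int 2r (\partial_z u_z)\,u_z\,dr\,dz \,=\, 0~.
\end{equation*}

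The main obstacle is the passage to the limit $R \to \infty$. We must verify that the error terms involving $\nabla \chi_R$, which carry a factor $1/R$ together with combinations such as $r^2 u\,\omega_\theta$ or $r^2 \nabla \omega_\theta$, tend to zero; and that $I_R(t) \to \int r^2 \omega_\theta(t)\,dr\,dz$ with this latter quantity finite for all $t$. The spatial confinement of Proposition~\ref{propconfi} combined with the $L^p$ bounds on $u$ (Proposition~\ref{BSprop}) and on $\omega_\theta, \nabla\omega_\theta$ (Propositions~\ref{Lpprop} and~\ref{Lpderprop}) gives polynomial control of $\omega_\theta$ and $u$ on the annulus $\{|(r,z)| \sim R\}$, sufficient to kill the errors as $R \to \infty$. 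To ensure that $\int r^2 \omega_\theta(t)\,dr\,dz$ stays finite, one either extracts a Gronwall-type inequality from the cutoff identity itself, or — more simply — approximates $\omega_0 \ge 0$ by smooth, rapidly decaying data $\omega_0^{(n)}$, for which the cutoff is unnecessary and the formal calculation above becomes a genuine proof, then passes to the limit using the continuous dependence on initial data recorded in Remark~\ref{Texist}.2. In the nonnegative case, the positivity of $r^2\omega_\theta$ makes the monotone convergence argument particularly clean.
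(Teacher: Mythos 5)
Your proof is correct and begins exactly as the paper's: multiply \eqref{omeq2} by $r^2$, integrate, observe that the diffusion operator contributes $(2+0-1-1)\int\omega_\theta=0$, and reduce everything to showing $\int_\Omega r\,u_r\,\omega_\theta\dd r\dd z=0$. Where you diverge is in how that last integral is killed. The paper inserts the explicit Biot--Savart representation \eqref{BSu}--\eqref{Grdef} and notes that the kernel $r\,G_r(r,z,\bar r,\bar z) = -\tfrac{1}{\pi}\tfrac{z-\bar z}{r^{1/2}\bar r^{1/2}}F'(\xi^2)$ is \emph{antisymmetric} under the exchange $(r,z)\leftrightarrow(\bar r,\bar z)$, so the symmetrized double integral vanishes identically. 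You instead substitute $\omega_\theta=\partial_z u_r-\partial_r u_z$ and integrate by parts using $\partial_r u_r+u_r/r+\partial_z u_z=0$; the algebra is right (I checked the signs, including $2u_r+2r\partial_r u_r=-2r\partial_z u_z$). The trade-off is real: your route requires decay of $u$ and $\nabla u$ at infinity to discard boundary terms and to justify the Fubini/IBP manipulations, which for general $\omega_0\in L^1(\Omega)$ with finite impulse forces exactly the cutoff-plus-approximation machinery you describe at length (and your appeal to Proposition~\ref{propconfi} only controls the $L^1$ mass in far annuli, not the weighted quantity $r^2|u||\omega_\theta|$ directly, so the approximation by rapidly decaying data is really the load-bearing step there). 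The paper's antisymmetry trick sidesteps all of this: the double integral is absolutely convergent under the available bounds ($|rG_r|\le C\,r^{1/2}\bar r^{1/2}/|\xi-\bar\xi|$ together with $\omega_\theta\in L^1\cap L^\infty$ and finite impulse), and no decay of the velocity or its derivatives is ever needed. Your argument is the classical ``physical'' derivation of impulse conservation and is salvageable, but it is heavier than necessary here; if you keep it, the justification of the boundary terms at $r\to\infty$ and the persistence of the second moment should be written out rather than deferred.
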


\begin{proof}
Using \eqref{omeq2} we find by a direct calculation
\begin{align*}
 \frac{\D}{\D t}\int_\Omega r^2\omega_\theta\dd r \dd z \,&=\,
  \int_\Omega r^2\Bigl(\partial_r^2 + \partial_z^2 + \frac{1}{r}\partial_r
  - \frac{1}{r^2}\Bigr)\omega_\theta\dd r\dd z - \int_\Omega r^2
  \div_*(u \omega_\theta) \dd r\dd z \\
  \,&=\, 2\int_\Omega ru_r \omega_\theta \dd r\dd z~.
\end{align*}
The last integral actually vanishes. Indeed, using the explicit
formulas \eqref{BSu} and \eqref{Grdef}, we obtain
\[
  \int_\Omega ru_r \omega_\theta \dd r\dd z \,=\, -\frac{1}{\pi}
  \int_\Omega\int_\Omega \frac{z-\bar z}{r^{1/2}\,\bar r^{1/2}}
  \,F'(\xi^2)\omega_\theta(\bar r,\bar z)\omega_\theta(r,z)
  \dd\bar r\dd\bar z\dd r\dd z \,=\, 0~,
\]
because the integrand is odd with respect to the permutation
$(r,z) \leftrightarrow (\bar r,\bar z)$. This gives the
desired result.
\end{proof}

Under the assumption that $\cI < \infty$, one can obtain precise
information on the long-time behavior of the axisymmetric 
vorticity. We begin with the linear case\:

\begin{lem}\label{Mlindecay}
If $\omega_0 \in L^1(\Omega)$ is non-negative and $\cI < \infty$,
one has
\begin{equation}\label{Mlin}
  \lim_{t \to \infty} t^2 (S(t)\omega_0)(r\sqrt{t},z\sqrt{t}) \,=\,
  \frac{\cI}{16\sqrt{\pi}}\,r\,e^{-\frac{r^2+z^2}{4}}~, \qquad
  (r,z) \in \Omega~,
\end{equation}
where convergence holds in $L^p(\Omega)$ for $1 \le p \le \infty$.
\end{lem}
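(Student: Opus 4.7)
The plan is to work directly with the explicit formula~\eqref{Sdef}. Setting $W_t(r,z) := t^2(S(t)\omega_0)(r\sqrt t, z\sqrt t)$, the substitution $(R,Z) = (r\sqrt t, z\sqrt t)$ in \eqref{Sdef} yields $W_t(r,z) = \int_\Omega K_t(r,z,\bar r,\bar z)\,\omega_0(\bar r,\bar z)\,d\bar r\,d\bar z$ with
\[
  K_t(r,z,\bar r,\bar z) \,=\, \frac{t^{3/4}\bar r^{1/2}}{4\pi\, r^{1/2}}\, H\!\Bigl(\frac{\sqrt t}{r\bar r}\Bigr)\,\exp\!\Bigl(-\frac{(r-\bar r/\sqrt t)^2+(z-\bar z/\sqrt t)^2}{4}\Bigr).
\]
The key observation is that $\sqrt t/(r\bar r)\to\infty$ as $t\to\infty$ for fixed $(r,\bar r)$, so Lemma~\ref{Hproperties}(ii) gives $H(\sqrt t/(r\bar r)) \sim \sqrt\pi (r\bar r)^{3/2}/(4t^{3/4})$, which cancels the $t^{3/4}$ prefactor exactly. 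Combined with the convergence of the exponential to $e^{-(r^2+z^2)/4}$, this delivers the pointwise kernel limit $K_t(r,z,\bar r,\bar z) \to r\bar r^2 e^{-(r^2+z^2)/4}/(16\sqrt\pi)$.

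A uniform dominating bound is supplied by Remark~\ref{Hbounds}: the inequality $\tau^{3/2}H(\tau)\le C$ gives $K_t \le C\,r\,\bar r^2\exp(\cdots) \le C\,r\,\bar r^2$. Since the hypothesis $\cI<\infty$ means $\bar r^2\omega_0\in L^1(\Omega)$, Lebesgue's dominated convergence theorem applied for each fixed $(r,z)$ yields $W_t(r,z)\to W_\infty(r,z):=\cI\, r\, e^{-(r^2+z^2)/4}/(16\sqrt\pi)$ pointwise on $\Omega$. To upgrade to convergence in $L^p(\Omega)$, I would first apply Proposition~\ref{Sprop2} with weight exponents $\alpha=0$, $\beta=2$ and Lebesgue exponents $1\to p$ to obtain $\|S(t)\omega_0\|_{L^p(\Omega)}\le C\cI\,t^{1/p-2}$, equivalently $\|W_t\|_{L^p(\Omega)}\le C\cI$ uniformly in $t>0$ and $p\in[1,\infty]$.

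For $1\le p<\infty$ a Vitali-type argument then delivers the $L^p$-convergence: equiintegrability follows from the uniform $L^{p+1}$-bound, tightness in the $r$-direction from the conservation $\int r^2 W_t\,dr\,dz=\cI$ (itself a descent of the conservation of $\int(x_1\omega_2-x_2\omega_1)\,dx$ under the 3D heat equation satisfied by $\omega=\omega_\theta e_\theta$), and tightness in the $z$-direction from the Gaussian factor in $K_t$ after splitting the $(\bar r,\bar z)$-integration into a large ball (where $|\bar z|/\sqrt t$ is small, producing uniform Gaussian decay in $z$) and its complement (whose contribution vanishes as $t\to\infty$ since $\bar r^2\omega_0\in L^1$). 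The case $p=\infty$ requires equicontinuity of the family $(W_t)_{t\ge t_0}$, which comes from uniform bounds on $\nabla W_t$ obtained by differentiating the representation above and controlling the resulting kernel as in Proposition~\ref{Sprop}(ii); combined with the same uniform tail decay of $W_t$, Arzel\`a-Ascoli upgrades the pointwise convergence to uniform convergence on $\Omega$. The main technical obstacle is the tightness in the $z$-direction, since the hypotheses do not include control of the second $z$-moment of $\omega_0$, forcing the delicate Gaussian splitting argument above.
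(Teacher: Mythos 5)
Your pointwise argument is exactly the paper's: rescale in \eqref{Sdef}, factor the kernel as $\frac{r\bar r^2}{4\pi}K\bigl(\frac{\sqrt t}{r\bar r}\bigr)e^{-\frac14[(r-\bar r/\sqrt t)^2+(z-\bar z/\sqrt t)^2]}$ with $K(\tau)=\tau^{3/2}H(\tau)$ bounded and tending to $\sqrt\pi/4$, and apply dominated convergence using $\bar r^2\omega_0\in L^1(\Omega)$; your constant $\cI/(16\sqrt\pi)$ checks out. Where you diverge is the upgrade to $L^p$ convergence. The paper disposes of this in one line: prove the $L^p$ limit for compactly supported $\omega_0$ (where everything is uniform) and then pass to general $\omega_0$ by approximation. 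The point that makes this work — and that your own computation already contains — is that $\omega_0\mapsto W_t[\omega_0]$ is \emph{linear} with the $t$-uniform bound $\|W_t[\omega_0]\|_{L^p(\Omega)}\le C\|r^2\omega_0\|_{L^1(\Omega)}$ (your Proposition~\ref{Sprop2} step), and the limit profile is bounded by the same norm; a $3\epsilon$ argument in the weighted norm $\|r^2\cdot\|_{L^1(\Omega)}$ then transfers convergence from the dense subclass to all admissible data, with no need for Vitali, moment conservation, Gaussian tail splittings, or Arzel\`a--Ascoli. Your route is workable — the $r$-tightness via the conserved impulse and the $z$-tightness via splitting $|\bar z|\lessgtr Z\sqrt t/2$ both go through, and the $p=\infty$ case can be closed with gradient bounds on the rescaled kernel — but it is considerably heavier, and the equicontinuity estimates you invoke for $p=\infty$ are not literally in the paper (Proposition~\ref{Sprop}(ii) bounds $S(t)\div_*$, not $\nabla S(t)$, so you would have to redo the kernel case analysis for the differentiated kernel). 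In short: correct, same core computation, but you reconstruct by hand a compactness argument that linearity plus density renders unnecessary.
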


\begin{proof}
In view of the explicit formula \eqref{Sdef}, we have for all
$(r,z) \in \Omega$\:
\[
  t^2(S(t)\omega_0)(r\sqrt{t},z\sqrt{t}) \,=\, \frac{r}{4\pi}
  \int_\Omega \,K\Bigl(\frac{\sqrt{t}}{r\bar r}\Bigr)\,\exp\Bigl(
  -\frac14\Bigl[\Bigl(r{-}\frac{\bar r}{\sqrt{t}}\Bigr)^2 +
  \Bigl(z{-}\frac{\bar z}{\sqrt{t}}\Bigr)^2\Bigr]\Bigr)
  \,\bar r^2\omega_0(\bar r,\bar z)\dd\bar r\dd\bar z~,
\]
where $K(\tau) = \tau^{3/2}H(\tau)$ is uniformly bounded and
converges to $\sqrt{\pi}/4$ as $\tau \to +\infty$. The 
pointwise result \eqref{Mlin} thus follows from Lebesgue's dominated 
convergence theorem. Convergence in $L^p$ norms is easy to 
prove if $\omega_0$ has compact support in $\Omega$, and can 
be established in the general case by an approximation argument.
\end{proof}

It follows in particular from \eqref{Mlin} that $\|S(t)
\omega_0\|_{L^p(\Omega)} = \cO(t^{-2+\frac1p})$ as $t \to \infty$.
Our last result is the extension of Lemma~\ref{Mlindecay}
to the nonlinear case. 

\begin{prop}\label{Mnonlin}
If $\omega_0 \in L^1(\Omega)$ is non-negative and $\cI < \infty$,
the solution of \eqref{omeq2} with initial data $\omega_0$
satisfies
\begin{equation}\label{Mnon}
  \lim_{t \to \infty} t^2 \omega_\theta(r\sqrt{t},z\sqrt{t},t) \,=\,
  \frac{\cI}{16\sqrt{\pi}}\,r\,e^{-\frac{r^2+z^2}{4}}~, \qquad
  (r,z) \in \Omega~,
\end{equation}
where convergence holds in $L^p(\Omega)$ for all $p \in [1,\infty]$.
Thus $\|\omega_\theta(t)\|_{L^p(\Omega)} = \cO(t^{-2+\frac1p})$
as $t \to \infty$.
\end{prop}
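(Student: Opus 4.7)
My plan is to combine the Duhamel formula with the linear asymptotic formula of Lemma~\ref{Mlindecay}. Writing
$$\omega_\theta(t) \,=\, \omega_{\lin}(t) - N(t),\qquad \omega_{\lin}(t) := S(t)\omega_0,\qquad N(t):= \int_0^t S(t-s)\,\div_*(u(s)\omega_\theta(s))\,\D s,$$
Lemma~\ref{Mlindecay} directly yields $t^2\omega_{\lin}(r\sqrt t,z\sqrt t,t) \to W_*(r,z)$ in $L^p(\Omega)$ for every $p\in[1,\infty]$, so the task reduces to proving
\begin{equation}\label{Nvanish}
t^{2-\frac1p}\|N(t)\|_{L^p(\Omega)}\,\longrightarrow\,0 \qquad\text{as }t\to\infty,\quad 1\le p\le\infty.
\end{equation}

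The first and crucial step would be to upgrade the a priori decay of Proposition~\ref{Lpprop} to the sharp bound
\begin{equation}\label{sharpdecay}
  \|\omega_\theta(t)\|_{L^p(\Omega)} \,\le\, \frac{C(\cI,M)}{t^{2-1/p}}, \qquad 1\le p\le\infty,
\end{equation}
reflecting the impulse conservation of Lemma~\ref{Mconser}. The key observation is that equation \eqref{etaeq} for $\eta = \omega_\theta/r$ has the structure of an advection-diffusion equation in \emph{effective dimension~$5$}: the diffusion operator $\Delta + (2/r)\partial_r = \partial_r^2 + (3/r)\partial_r + \partial_z^2$ is precisely the axisymmetric Laplacian on $\R^4\times\R$ (regarding $r$ as the Euclidean norm in the 4D factor), and the impulse $\cI = \int_\Omega \eta\,r^3\,\D r\D z$ is proportional to the 5D $L^1$-mass of $\eta$, which is conserved. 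The residual 5D divergence of the effective advection $u - (2/r)e_r$ equals $2u_r/r$ and is controlled by $C/t$ via \eqref{BSest5}. A Nash-type $L^1$–$L^\infty$ argument in this 5D picture, in the spirit of Lemma~\ref{Lplem}, would then yield $\|\eta(t)\|_{L^\infty(\R^3)} \le C\cI/t^{5/2}$; combining with the spatial confinement of Proposition~\ref{propconfi} translates this into \eqref{sharpdecay}.

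With \eqref{sharpdecay} at hand, \eqref{Nvanish} follows by a Duhamel estimate. Indeed, \eqref{BSest2} with $p=1,q=\infty$ gives $\|u(s)\|_{L^\infty}\le Cs^{-3/2}$, hence $\|u(s)\omega_\theta(s)\|_{L^q}\le Cs^{-3+1/q}$ for $s\ge 1$, while for $s\in(0,1]$ the coarser bounds of Sections~\ref{sec41}--\ref{sec5} suffice. Splitting the time integral in $N(t)$ at $s=t/2$ and applying the weighted semigroup estimates of Proposition~\ref{Sprop2} with an appropriate choice of weight $\beta\in[0,1]$ that exploits the vanishing of $u\omega_\theta$ at $r=0$, one verifies $\|N(t)\|_{L^p} = o(t^{-(2-1/p)})$. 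The pointwise convergence in \eqref{Mnon} then follows from convergence in $L^\infty$ together with the continuity of the Gaussian profile $W_*$.

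The main obstacle is the rigorous derivation of \eqref{sharpdecay}. The 5D Nash energy inequality must be carried out in the presence of the non-divergence-free effective advection; the $O(1/t)$ bound on the 5D divergence is tame enough to be absorbed via a differential inequality of the same type as in the proof of Proposition~\ref{Lpprop}, producing at worst logarithmic losses that vanish after the bootstrap. A viable alternative, should the 5D route prove technically awkward, is to introduce similarity variables $(R,Z) = (r,z)/\sqrt t$, $\tau = \log t$, and $W(R,Z,\tau) = t^2\omega_\theta(r,z,t)$, in which framework the impulse becomes a conserved first integral and $W_*$ can be identified as the unique non-negative stationary profile with prescribed impulse.
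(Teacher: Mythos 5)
Your overall strategy (Duhamel formula plus Lemma~\ref{Mlindecay}) is the right starting point, and it is also how the paper begins; but the two pillars of your plan each have a genuine gap, and the second one is fatal as stated. The crucial problem is your claim \eqref{Nvanish} that $t^{2-\frac1p}\|N(t)\|_{L^p}\to 0$ for the \emph{full} Duhamel integral $N(t)=\int_0^t S(t-s)\div_*(u(s)\omega_\theta(s))\dd s$, to be proved ``by a Duhamel estimate'' using Proposition~\ref{Sprop2}. Take $p=1$. The best available weighted bound is \eqref{Sweight2} with $\beta=1$ (the proposition does not allow $\beta>1$ for the $\div_*$ estimate), which gives $\|S(t-s)\div_*(u\omega_\theta)(s)\|_{L^1}\le C(t-s)^{-1}\|r\,u\,\omega_\theta(s)\|_{L^1}$. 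The contribution of $s\in[0,1]$ is then of size exactly $c/t$ with $c>0$ for generic data, i.e. $O(t^{-1})$ and \emph{not} $o(t^{-1})$ --- the same order as $\|S(t)\omega_0\|_{L^1}$ itself. No norm estimate on the integrand can do better: the statement $t\|N(t)\|_{L^1}\to 0$ is true only because of a cancellation, namely that $\div_*(u\,\omega_\theta)$ has vanishing impulse-moment ($\int_\Omega r u_r\omega_\theta\dd r\dd z=0$, Lemma~\ref{Mconser}), so that $S(t-s)\div_*(u\omega_\theta(s))$ decays faster than the generic rate for large $t-s$. The paper encodes this cancellation not through a refined semigroup expansion but through the restart device: it first proves $\|\omega_\theta(t)\|_{L^1}=\cO(t^{-1})$ via the integral inequality \eqref{Mnon1}, then shows $t\,\|\omega_\theta(t)-S(t-t_0)\omega_\theta(t_0)\|_{L^1}\le C(M,\cI)/\sqrt{t_0}$, and finally applies Lemma~\ref{Mlindecay} to $S(t-t_0)\omega_\theta(t_0)$, whose impulse equals $\cI$ by conservation, before letting $t_0\to\infty$. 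Your proposal never uses the impulse conservation at this stage, so it cannot close.

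The first pillar, the sharp decay \eqref{sharpdecay}, also has a hole. Your five-dimensional reading of \eqref{etaeq} is a nice observation and a Nash argument along the lines of Lemma~\ref{Lplem} could plausibly give $\|\eta(t)\|_{L^\infty(\R^3)}\le C\,\cI\,t^{-5/2}$; but the passage from this to $\|\omega_\theta(t)\|_{L^\infty(\Omega)}\le C t^{-2}$ ``by combining with the spatial confinement of Proposition~\ref{propconfi}'' does not work: $|\omega_\theta|=r|\eta|$, and the confinement estimate \eqref{confinest} controls only the $L^1$ mass outside the ball of radius $\cO(\sqrt t)$, not the pointwise size of $\omega_\theta$ at large $r$, so the sup of $r|\eta|$ is not controlled. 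The paper instead obtains $\|\omega_\theta(t)\|_{L^1}=\cO(t^{-1})$ directly from the integral inequality \eqref{Mnon1} (whose driving term $\|S(t)\omega_0\|_{L^1}=\cO(t^{-1})$ already uses $\cI<\infty$ through Lemma~\ref{Mlindecay}), and then upgrades to $\|\omega_\theta(t)\|_{L^p}=\cO(t^{-2+1/p})$ by the same Duhamel bootstrap as in Section~\ref{sec5}. Your closing alternative via similarity variables would require a compactness and rigidity argument that is an entire project in itself, not a fallback.
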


\begin{proof}
We estimate the integral term in \eqref{omint} in the following
way. First, using \eqref{Sweight2} with $\alpha = 0$ and
$\beta = 1$, we write
\[
  \Bigl\|\int_0^{t/2} S(t-s)
  \div_*(u(s)\omega_\theta(s))\dd s\Bigr\|_{L^1(\Omega)}
  \,\le\, \int_0^{t/2} \frac{C}{t-s}\,\|u(s)\|_{L^\infty(\Omega)}
  \|r\omega_\theta(s)\|_{L^1(\Omega)}\dd s~.
\]
From \eqref{BSest4} we have $\|u\|_{L^\infty}\|r\omega_\theta\|_{L^1}
\le C \|r\omega_\theta\|_{L^1}^{3/2} \|\omega_\theta/r\|_{L^\infty}^{1/2} \le
C \|\omega_\theta\|_{L^1}^{3/4} \|r^2\omega_\theta\|_{L^1}^{3/4}
\|\omega_\theta/r\|_{L^\infty}^{1/2}$, hence using Lemmas~\ref{L1lem}
and \ref{Lplem} we obtain
\[
  \|u(s)\|_{L^\infty(\Omega)}\|r\omega_\theta(s)\|_{L^1(\Omega)} \,\le\,
  C\,\frac{M^{1/2}\,\cI^{3/4}}{s^{3/4}}\,\|\omega_\theta(s)\|_{L^1}^{3/4}~,
\]
where $M, \cI$ are defined in \eqref{Mdef}. Similarly, we find
\[
  \Bigl\|\int_{t/2}^t S(t-s)
  \div_*(u(s)\omega_\theta(s))\dd s\Bigr\|_{L^1(\Omega)}
  \,\le\, \int_{t/2}^t \frac{C}{(t-s)^{1/2}}\,\|u(s)\|_{L^\infty(\Omega)}
  \|\omega_\theta(s)\|_{L^1(\Omega)}\dd s~,
\]
where
\[
  \|u(s)\|_{L^\infty(\Omega)}\|\omega_\theta(s)\|_{L^1(\Omega)}
  \,\le\, C\,\frac{M^{1/2}\,\cI^{1/4}}{s^{3/4}}\,\|\omega_\theta(s)\|_{L^1}^{5/4}~.
\]
Thus it follows from \eqref{omint} that
\begin{align}\nonumber
  \|\omega_\theta(t)\|_{L^1(\Omega)} \,&\le\, \|S(t)\omega_0\|_{L^1(\Omega)}
  + C M^{1/2}\,\cI^{3/4}\int_0^{t/2}\frac{\|\omega_\theta(s)\|_{L^1}^{3/4}}{
  (t-s)\,s^{3/4}}\dd s \\ \label{Mnon1}
  &\quad + C M^{1/2}\,\cI^{1/4}\int_{t/2}^t\frac{
  \|\omega_\theta(s)\|_{L^1}^{5/4}}{(t-s)^{1/2}\,s^{3/4}}\dd s~.
\end{align}
Since $\|S(t)\omega_0\|_{L^1(\Omega)} = \cO(t^{-1})$ as $t \to \infty$
by Lemma~\ref{Mlindecay}, the integral inequality \eqref{Mnon1}
implies (by a straightforward bootstrap argument) that
$\|\omega_\theta(t)\|_{L^1(\Omega)} = \cO(t^{-1})$ as $t \to \infty$. In a
similar way, one can show that $\|\omega_\theta(t)\|_{L^p(\Omega)} =
\cO(t^{-2+1/p})$ as $t \to \infty$ for all $p \in [1,\infty]$. 

To prove \eqref{Mnon}, we fix $t_0 \ge 1$ and consider the integral
equation \eqref{omint2} for $t \ge t_0$. If we bound the nonlinear
term exactly as in \eqref{Mnon1}, using the additional information
that $\|\omega_\theta(t)\|_{L^1(\Omega)} = \cO(t^{-1})$, we obtain the
estimate
\begin{equation}\label{Mnon2}
  t \|\omega_\theta(t) - S(t-t_0)\omega_\theta(t_0)\|_{L^1(\Omega)}
  \,\le\, \frac{C(M,\cI)}{\sqrt{t_0}}~,
\end{equation}
for some positive constant $C(M,\cI)$. We now rescale the solution
$\omega_\theta(r,z,t)$ as in \eqref{Mnon} and take the limit as $t \to
\infty$. Using \eqref{Mnon2} and Lemma~\ref{Mlindecay}, we
obtain
\begin{equation}\label{Mnon3}
  \limsup_{t \to \infty} \|t^2 \omega_\theta(\cdot\sqrt{t},\cdot\sqrt{t},t)
  - \Phi\|_{L^1(\Omega)} \,\le\, \frac{C(M,\cI)}{\sqrt{t_0}}~,
\end{equation}
where $\Phi : \Omega \to \R$ denotes the function defined by the
expression in the right-hand side of \eqref{Mnon}. If we take the
limit $t_0 \to \infty$ in \eqref{Mnon3}, we see that \eqref{Mnon}
holds if convergence is understood in $L^1(\Omega)$. A similar
argument shows that convergence also holds $L^p(\Omega)$ for all $p \in
[1,\infty]$.
\end{proof}

\end{document}